\documentclass[12pt,reqno]{amsart}

\usepackage{a4wide}

\pdfoutput=1

\usepackage[utf8]{inputenc}
\usepackage[english]{babel}
\usepackage{amsmath,amsfonts,amsthm,amssymb,amscd,amsbsy}
\usepackage[all]{xy}
\usepackage{graphicx}
\usepackage{euscript}
\usepackage{mathtext}
\usepackage{upgreek}
\usepackage{hyperref}
\hypersetup{
    colorlinks=true,
    linkcolor=blue,
    filecolor=magenta,      
    urlcolor=cyan,
}
\usepackage{euscript}
\usepackage{mathtools}
\usepackage{microtype}
\usepackage{setspace}
\usepackage{fancyhdr}
\usepackage{pict2e}
\usepackage{mathrsfs}
\usepackage[shortlabels]{enumitem}
\usepackage{stackengine}
\usepackage{changepage}
\usepackage{parskip}
\usepackage{tikz}
\usetikzlibrary{arrows}
\usepackage{lmodern}
\usepackage{csquotes}

\usepackage{xpatch}
\xapptocmd\normalsize{%
 \abovedisplayskip=10pt plus 1pt minus 3pt
 \abovedisplayshortskip=1pt plus 3pt
 \belowdisplayskip=10pt plus 2pt minus 3pt
 \belowdisplayshortskip=8pt plus 3pt minus 2pt
}{}{}

\numberwithin{equation}{section}

\usepackage[
backend=biber,
style=ieee,
sorting=nyt,
giveninits=true,
dashed=false,
maxbibnames=99
]{biblatex}
\addbibresource{main.bib}

\DeclareSymbolFont{largesymbols}{OMX}{cmex}{m}{n}

\newtheoremstyle{theoremstyle}
  {0mm} 
  {0mm} 
  {\itshape} 
  {} 
  {\bfseries} 
  {.} 
  {.5em} 
  {} 

\theoremstyle{theoremstyle}
\newtheorem{proposition}{Proposition}[section]
\newtheorem{lemma}[proposition]{Lemma}
\newtheorem{theorem}[proposition]{Theorem}

\newtheorem{corollary}{Corollary}[proposition]

\newtheoremstyle{examplestyle}
  {0mm} 
  {0mm} 
  {} 
  {} 
  {\bfseries} 
  {.} 
  {.5em} 
  {} 

\theoremstyle{examplestyle}
\newtheorem{remark}[proposition]{Remark}

\newtheoremstyle{remarks}
  {0mm} 
  {0mm} 
  {} 
  {} 
  {\bfseries} 
  {.} 
  {-.5em} 
  {} 

\theoremstyle{remarks}
\newtheorem*{remarks}{Remarks}

\renewcommand{\Re}{\hspace{0.07em}\mathrm{Re}\hspace{0.07em}}
\renewcommand{\Im}{\hspace{0.08em}\mathrm{Im}\hspace{0.04em}}
\newcommand{\ccdot}{\hspace{-1.2pt} \cdot \hspace{-0.5pt}}

\newcommand{\rk}{\mathrm{rk} \hspace{0.1em}}

\newcommand{\tr}{\mathrm{tr}}

\renewcommand{\mod}{\mathrm{mod} \hspace{0.2em}}

\newcommand{\Aut}{\mathrm{Aut} \hspace{0.06em}}

\newcommand{\End}{\mathrm{End}}

\newcommand{\Inn}{\mathrm{Inn}}
\newcommand{\DD}{\mathrm{DD}}
\newcommand{\Lie}{\mathrm{Lie} \hspace{0.06em}}

\newcommand{\defqe}{=\vcentcolon}
\newcommand{\dist}{\mathrm{dist}}
\newcommand{\Rl}{\mathbb{R}}
\newcommand{\Cx}{\mathbb{C}}
\newcommand{\Hq}{\mathbb{H}}
\newcommand{\Oo}{\mathbb{O}}
\newcommand{\Ad}{\mathrm{Ad} \hspace{0.06em}}
\newcommand{\ad}{\mathrm{ad} \hspace{0.06em}}
\newcommand{\mk}[1]{\mathfrak{{#1}}}
\newcommand{\mb}[1]{\mathbb{{#1}}}
\newcommand{\pr}{\mathrm{pr}}
\newcommand{\vspan}{\mathrm{span}}

\newcommand{\set}[1]{\hspace{-0.8pt} \left \{ \hspace{0.03em} {#1} \hspace{0.03em} \right \}}
\newcommand{\bilin}[2]{\langle \hspace{0.1em} {#1} \hspace{0.1em} , \hspace{0.1em} {#2} \hspace{0.1em} \rangle \hspace{0.02em}}
\newcommand{\cross}[2]{\langle \hspace{0.1em} {#1} \hspace{0.1em} | \hspace{0.1em} {#2} \hspace{0.1em} \rangle \hspace{0.02em}}
\newcommand{\restr}[2]{{\left.\kern-\nulldelimiterspace #1 \vphantom{\big|} \right|_{#2}}}

\newcommand{\II}{I \hspace{-0.21em} I}
\newcommand{\Gr}{\mathrm{Gr}}
\newcommand{\Spin}{\mathrm{Spin}}

\newcommand{\GL}{\mathrm{GL}}
\newcommand{\SL}{\mathrm{SL}}
\renewcommand{\O}{\mathrm{O}}
\newcommand{\SO}{\mathrm{SO}}
\newcommand{\U}{\mathrm{U}}
\newcommand{\SU}{\mathrm{SU}}
\newcommand{\Sp}{\mathrm{Sp}}
\newcommand{\Ss}{\mathrm{S}}

\newcommand\altxrightarrow[2][0pt]{\mathrel{\ensurestackMath{\stackengine%
  {\dimexpr#1-7.5pt}{\xrightarrow{\phantom{#2}}}{\scriptstyle\!#2\,}%
  {O}{c}{F}{F}{S}}}}
\newcommand{\isoto}{\altxrightarrow[1pt]{\sim}}

\newcommand{\mysetminusD}{\hbox{\tikz{\useasboundingbox (-0.5pt,-0.5pt) rectangle (5pt,8pt); \draw[line width=0.6pt,line cap=round] (3.5pt,-1.5pt) -- (0,7.25pt);}} \hspace{-1pt}}
\newcommand{\mysetminusT}{\mysetminusD}
\newcommand{\mysetminusS}{\hbox{\tikz{\draw[line width=0.45pt,line cap=round] (2pt,0) -- (-0.5pt,5pt);}} \hspace{1pt}}
\newcommand{\mysetminusSS}{\hbox{\tikz{\draw[line width=0.4pt,line cap=round] (1.5pt,0) -- (0,3pt);}}}

\newcommand{\mysetminus}{\mathbin{\mathchoice{\mysetminusD}{\mysetminusT}{\mysetminusS}{\mysetminusSS}}}
 
\makeatletter
\newcommand{\extp}{\@ifnextchar^\@extp{\@extp^{\,}}}
\def\@extp^#1{\mathop{\bigwedge\nolimits^{\!#1}}}
\makeatother

\newcommand{\overbar}[1]{\mkern 1.5mu\overline{\mkern-2mu#1\mkern-1.5mu}\mkern 1.5mu}

\makeatletter
\DeclareRobustCommand{\loplus}{\mathbin{\mathpalette\dog@lsemi{+}}}
\DeclareRobustCommand{\roplus}{\mathbin{\mathpalette\dog@rsemi{+}}}

\newcommand{\dog@rsemi}[2]{\dog@semi{#1}{#2}{-90,90}}
\newcommand{\dog@lsemi}[2]{\dog@semi{#1}{#2}{270,90}}
\newcommand{\dog@semi}[3]{%
  \begingroup
  \sbox\z@{$\m@th#1#2$}%
  \setlength{\unitlength}{\dimexpr\ht\z@+\dp\z@\relax}%
  \makebox[\wd\z@]{\raisebox{-\dp\z@}{%
    \begin{picture}(1,1)
    \linethickness{\variable@rule{#1}}
    \roundcap
    \put(0.5,0.5){\makebox(0,0){\raisebox{\dp\z@}{$\m@th#1#2$}}}
    \put(0.5,0.5){\arc[#3]{0.5}}
    \end{picture}%
  }}%
  \endgroup
}
\newcommand{\variable@rule}[1]{%
  \fontdimen8  
  \ifx#1\displaystyle\textfont3\else
    \ifx#1\textstyle\textfont3\else
      \ifx#1\scriptstyle\scriptfont3\else
        \scriptscriptfont3\relax
  \fi\fi\fi
}
\makeatother

\begin{document}

\title[Homogeneous hypersurfaces in symmetric spaces of rank two]{Classification of homogeneous hypersurfaces in some noncompact symmetric spaces of rank two}
\author{Ivan Solonenko}
\address{Department of Mathematics, King's College London, United Kingdom}
\email{ivan.solonenko@kcl.ac.uk}

\begin{abstract}
We classify, up to isometric congruence, the homogeneous hypersurfaces in the Riemannian symmetric spaces $\SL(3,\Hq)/\Sp(3), \hspace{1pt} \SO(5,\Cx)/\SO(5),$ and $\mathrm{Gr}^*(2,\Cx^{n+4}) = \SU(n+2,2)/\Ss(\U(n+2)\U(2)), \, n \geqslant 1$. 
\end{abstract}

\maketitle

\vspace{-1em}

\section{Introduction}

A properly embedded connected hypersurface $S$ in a complete connected Riemannian manifold $M$ is called homogeneous if the subgroup of isometries of $M$ preserving $S$ acts transitively on $S$. A proper isometric action of a connected Lie group $G$ on $M$ is said to be of cohomogeneity one if it has a codimension one orbit, and two such actions are called orbit equivalent if there is an isometry of $M$ mapping the orbits of one action onto the orbits of the other. Nonsingular orbits of cohomogeneity-one actions are homogeneous hypersurfaces and, conversely, every homogeneous hypersurface arises in this way. Since a cohomogeneity-one action is uniquely determined by any of its orbits, the problem of classification of homogeneous hypersurfaces up to isometric congruence is equivalent to that of cohomogeneity-one actions up to orbit equivalence. Such actions have been in the limelight because they can be used to construct various geometric structures on manifolds, like Einstein metrics or metrics with special holonomy. In the context of Riemannian symmetric spaces, they also reveal new connections between geometric and algebraic properties of the space as well as provide the chief example of polar and hyperpolar actions.

The classification of cohomogeneity-one actions on irreducible symmetric spaces of compact type was obtained by Kollross (see \cite{kollrossclassification} and further references there). The reducible case is still open. In the noncompact rank-one situation, the classification was obtained by Berndt, Tamaru, and Br\"uck for real and complex hyperbolic spaces and the Cayley hyperbolic plane (\cite{berndt_bruck}, \cite{berndttamarurankone}) and by D\'{ı}az-Ramos, Dom\'{ı}nguez-V\'azquez, and Rodr\'{ı}guez-V\'azquez for quaternionic hyperbolic spaces (\cite{protohomogeneous}). The higher rank noncompact case, however, presents a significantly more difficult problem. It is a standard fact about cohomogeneity-one actions on Hadamard manifolds that any such action has at most one singular orbit (and no exceptional orbits). Berndt and Tamaru classified cohomogeneity-one actions on irreducible symmetric spaces of noncompact type that have either no singular orbits (\cite{berndttamarufoliations}) or a totally geodesic singular orbit (\cite{berndttamarusingtotgeod}). They later went on to prove that any cohomogeneity-one action on such a space $M$ that has a non-totally-geodesic singular orbit must arise from one of two new methods proposed in \cite{berndttamarucohomogeneityone}: the canonical extension and the nilpotent construction. The canonical extension method allows one to obtain cohomogeneity-one actions on $M$ from those on certain totally geodesic submanifolds of $M$ called boundary components. These submanifolds are noncompact symmetric spaces of lower rank, which, in theory, allows one to proceed by induction on rank. The nilpotent construction method reduces the search for cohomogeneity-one actions to a certain problem in representation theory of reductive Lie groups. This method is arguably more involved and has so far produced only two new examples not arising through any other technique. It is noteworthy that both of them are actions on exceptional symmetric spaces related to the exceptional Lie group $G_2$.

Using the above results, Berndt and Tamaru obtained the explicit classification of cohomoge\-neity-one actions on symmetric spaces $\SL(3,\Rl)/\SO(3)$ and $G_2^2/\SO(4)$ (\cite{berndttamarucohomogeneityone}). Later, Berndt and Dom\'{ı}nguez-V\'azquez refined the nilpotent construction method, which allowed them to obtain the classification on $G_2(\Cx)/G_2, \hspace{1pt} \SL(3,\Cx)/\SU(3),$ and $\mathrm{Gr}^*(2, \Rl^{n+4}) = \SO^0(n+2,2)/\SO(n+2)\SO(2), \, n \geqslant 1$ (\cite{berdntdominguez-vazquez}).

The purpose of the present article is to coalesce ideas from these two papers as well as develop some new computational techniques to obtain the explicit classification of cohomogeneity-one actions -- and thus homogeneous hypersurfaces -- on several noncompact symmetric spaces of rank 2 that have not been considered yet, namely on $\SL(3,\Hq)/\Sp(3), \hspace{1pt} \SO(5,\Cx)/\SO(5),$ and $\mathrm{Gr}^*(2,\Cx^{n+4}) = \SU(n+2,2)/\Ss(\U(n+2)\U(2)), n \geqslant 1$. The first of them is the quaternionic analog of the spaces $\SL(3,\Rl)/\SO(3)$ and $\SL(3,\Cx)/\SU(3)$ that have already been dealt with. In order to handle this space, we have to use the classification of so-called protohomogeneous subspaces of $\Hq^n$ that was recently obtained in \cite{protohomogeneous}. The space $\mathrm{Gr}^*(2,\Cx^{n+4})$ is the complex analog of the real noncompact Grassmannian of two-planes studied by Berndt and Dom\'{ı}nguez-V\'azquez. It is also the noncompact dual of the ordinary Grassmannian of complex two-planes, a space whose differential geometry was studied in depth by Berndt in \cite{berndtgrassmannian}. We adapt some results from that paper to deal with the congruence problem for actions on $\mathrm{Gr}^*(2,\Cx^{n+4})$. Finally, $\SO(5,\Cx)/\SO(5)$ is a space with root system of type $B_2$ dual to the compact symmetric space $\Spin(5)$. The nilpotent construction problem is not so onerous for this space, so we handle it using rather elementary methods and no auxiliary results. The main result of the paper is that the nilpotent construction method yields no new actions on these spaces. This provides further evidence to the hypothesis made by Berndt and Tamaru in \cite{berndttamarucohomogeneityone} that the nilpotent construction only yields new actions for exceptional symmetric spaces, and perhaps only those related to $G_2$.

There are still a few noncompact symmetric spaces of rank 2 for which the classification of cohomogeneity-one actions remains open, namely $E_6^{-26}/F_4$ (type $A_2$), $\Sp(2,2)/\Sp(2)\Sp(2)$ (type $C_2$), $\Gr^*(2, \Hq^{n+4}), E_6^{-14}/\Spin(10)\U(1),$ and $\SO(5,\Hq)/\U(5)$ (all three of type $(BC)_2$). Unsurprisingly, the main obstacle to the classification for all of them is the complexity of the nilpotent construction problem. For example, for $E_6^{-26}/F_4$ it involves dealing with the 16-dimensional spin representation of $\Spin(9)$, while for $\Sp(2,2)/\Sp(2)\Sp(2)$ one has a 10-dimensional representation of the Lorentz group $\SO^0(5,1)$. Undoubtedly, novel conceptual methods are needed if one hopes to solve the classification problem for all symmetric spaces of noncompact type.

The paper is organized as follows. Section \ref{preliminaries} reviews the terminology, notation, and previous results needed to understand the rest of the paper. We also introduce some new terminology helpful for studying the nilpotent construction problem (Subsection \ref{nilpotent construction}). Apart from that, we correct a small conceptual error that occurred in \cite{berndttamarucohomogeneityone} and a few other publications of its authors (Subsection \ref{error correction}) and prove a useful result for noncompact rank-$1$ symmetric spaces allowing one to represent cohomogeneity-one actions with a singular orbit on them in a uniform manner (Subsection \ref{rank one case}). Sections \ref{first space}, \ref{second space}, and \ref{third space} are each dedicated to one of the three spaces in question.

\textbf{Acknowledgments.} I am enormously indebted to my supervisor J\"{u}rgen Berndt for his continuous guidance and support, invaluable advice, and an uncanny ability to always give just the right reference.

\section{Preliminaries}\label{preliminaries}

This section serves a number of purposes. First, it is intended to establish the terminology and notation necessary to understand Berndt and Tamaru's classification results as well as the rest of the paper. Second, it provides a slightly more general perspective on the nilpotent construction. Finally, it introduces a few useful results that will aid in describing certain cohomogeneity-one actions later in the paper. We follow the notation established in \cite{berdntdominguez-vazquez} and \cite{berndttamarucohomogeneityone} and refer to the same articles as well as \cite{helgason} and \cite{knapp} for more detailed expositions.

\subsection{Parabolic subalgebras and subgroups.} Given a symmetric space $M$, one usually represents it as a quotient $G/K$, where $G = I^0(M)$ and $K$ is the isotropy subgroup of $G$ at any point. In practice, however, many symmetric spaces are given as quotients $G/K$, where the Riemannian symmetric pair $(G,K)$ is not effective, i.e. the action of $G$ on $G/K$ is not effective, so we will allow some degree of freedom as well.

Let $(G, K)$ be a Riemannian symmetric pair of noncompact type. Let $(\mk{g}, \uptheta)$ denote the corresponding orthogonal symmetric Lie algebra. Recall that we have:

\begin{enumerate}
    \item $G$ and $\mk{g}$ are semisimple.
    \item $\uptheta$ is uniquely determined by $K$, it is a Cartan involution on $\mk{g}$ and gives a Cartan decomposition $\mk{g} = \mk{k} \oplus \mk{p}$ with $\mk{k} = \Lie(K)$.
    \item $\uptheta$ admits a unique lift to $G$, which we denote by $\Uptheta$ and sometimes call a global Cartan involution.
    \item $G^\Uptheta = K$.
    \item $Z(G) \subseteq K$ is a discrete subgroup.
\end{enumerate}

We assume that $\mk{k}$ contains no nontrivial ideals of $\mk{g}$ or, equivalently, that $\mk{g}$ has no compact ideals. Denote $M = G/K$ and $o = eK \in M$. Under our assumption, the subgroup of elements of $G$ that act trivially on $M$ is precisely $Z(G)$. We also assume $Z(G)$ is finite, which is the same as to ask that $K$ is compact. In this case, $K$ is a maximal compact subgroup of $G$. If $\uppi_o \colon G \to M$ stands for the orbit map at $o$, then $d(\uppi_o)_e \colon \mk{g} \to T_oM$ restricts to an isomorphism $\mk{p} \isoto T_oM$, and this is an isomorphism between the adjoint representation of $K$ on $\mk{p}$ and the isotropy representation of $K$ on $T_oM$. Although any $K$-invariant inner product on $\mk{p}$ gives a $G$-invariant Riemannian metric on $M$ and turns $M$ into a symmetric space of noncompact type, we take it to be the restriction of the Killing form $B$ of $\mk{g}$ to $\mk{p}$ by default\footnote{This assumption is harmless if $M$ is irreducible (which will always be the case for us in all concrete examples) since any other $G$-invariant Riemannian metric on $M$ would then differ from ours by a constant conformal factor.}. We then have a morphism of Lie groups $G \twoheadrightarrow I^0(M)$, and this is a local isomorphism with kernel $Z(G)$. We endow $\mk{g}$ with the inner product $B_\uptheta(X,Y) = -B(X, \uptheta Y)$. This inner product coincides with $B$ on $\mk{p}$ and equals $-B$ on $\mk{k}$, and the Cartan decomposition is orthogonal with respect to both $B_\uptheta$ and $B$. Whenever we talk about lengths and angles in $\mk{g}$, it is with respect to $B_\uptheta$ unless otherwise specified. If we have subspaces $U \subset V \subseteq \mk{g}$, we let $V \ominus U$ stand for the orthogonal complement of $U$ in $V$. Furthermore, if we have a direct sum decomposition $V = U \oplus W$ such that $U$ and $W$ are orthogonal to each other, we sometimes stress it by writing $V = U \oplus^\perp W$. Observe that $\uptheta$ is orthogonal and self-adjoint with respect to both $B$ and $B_\uptheta$. The adjoint representation of $G$ is orthogonal with respect to $B$ but not $B_\uptheta$. The adjoint representation of $K$ on $\mk{g}$, however, is orthogonal with respect to $B_\uptheta$ because it preserve the Cartan decomposition and thus commutes with $\uptheta$.

Pick a maximal abelian subspace $\mk{a}$ of $\mk{p}$ and denote $r = \dim \mk{a} = \rk(M)$. Let $A \subseteq G$ be the connected abelian Lie subgroup with $\Lie(A) = \mk{a}$. The totally geodesic submanifold $A \ccdot o \simeq \mathbb{E}^r \subseteq M$ is a maximal flat. Having fixed $\mk{a}$, we have the corresponding root system $\Upsigma \subset \mk{a}^*$ and restricted root space decomposition $\mk{g} = \mk{g}_0 \oplus \bigoplus_{\upalpha \in \Upsigma} \mk{g}_\upalpha$. All summands in this decomposition are mutually orthogonal. We also have an orthogonal decomposition $\mk{g}_0 = \mk{k}_0 \oplus \mk{a}$, where $\mk{k}_0 = Z_\mk{k}(\mk{a}) = N_\mk{k}(\mk{a})$. The Dynkin diagram\footnote{We treat $\DD_M$ as a vertex-weighted graph where each vertex is assigned the multiplicity of the corresponding simple root. In particular, whenever we talk about automorphisms of $\DD_M$, we require them to preserve these multiplicities and denote the corresponding group by $\Aut^\mathrm{w}(\DD_M)$ ('w' for 'weighted').} of $\Upsigma$ will be denoted by $\mathrm{DD}_M$. Pick a subset of positive roots $\Upsigma^+ \subset \Upsigma$ with $\Uplambda = \set{\upalpha_1,\ldots,\upalpha_r} \subseteq \Upsigma^+$ the corresponding set of simple roots and write $\mk{n} = \bigoplus_{\upalpha \in \Upsigma^+} \mk{g}_\upalpha$. This is a nilpotent Lie subalgebra and it gives rise to a connected nilpotent Lie subgroup $N \subset G$. We have a vector space decomposition $\mk{g} = \mk{k} \oplus \mk{a} \oplus \mk{n}$ known as the Iwasawa decomposition and its global counterpart $G = KAN$ (which is just a diffeomorphism). In particular, both $A$ and $N$ are closed and simply connected and $AN$ is a solvable subgroup of $G$ acting simply transitively on $M$. Consequently, $M$ can be realized as a solvable subgroup of $G$ with a suitable left-invariant metric, although this of course depends on many choices made along the way. The orbit $N \ccdot o$ of $N$ is a properly embedded submanifold of $M$ called a horocycle.

For each $\upalpha \in \Upsigma$ we define 
\begin{align*}
    \mk{k}_\upalpha = \mk{k} \cap (\mk{g}_\upalpha \oplus \mk{g}_{-\upalpha}) = \set{X + \uptheta X \mid X \in \mk{g}_\upalpha}, \\
    \mk{p}_\upalpha = \mk{p} \cap (\mk{g}_\upalpha \oplus \mk{g}_{-\upalpha}) = \set{X - \uptheta X \mid X \in \mk{g}_\upalpha},
\end{align*}
so $\mk{k}_\upalpha \oplus \mk{p}_\upalpha = \mk{g}_\upalpha \oplus \mk{g}_{-\upalpha}$.

When restricted to $\mk{a}$, $B_\uptheta$ gives rise to an isomorphism $\mk{a} \isoto \mk{a}^*$. We carry $B_\uptheta$ along this isomorphism to get an inner product on $\mk{a}^*$. Given $\upalpha \in \Upsigma$, we write $H_\upalpha$ for the corresponding vector in $\mk{a}$, so $\cross{H_\upalpha}{H_\upbeta} = \bilin{H_\upalpha}{\upbeta} = \bilin{\upalpha}{H_\upbeta} = \cross{\upalpha}{\upbeta}$. We also let $H^1, \ldots, H^r$ stand for the basis of $\mk{a}$ dual to $\upalpha_1, \ldots, \upalpha_r$, so we have $\bilin{H_i}{\upalpha_j} = \cross{H^i}{H_{\upalpha_j}} = \updelta_{ij}$.

Conjugacy classes of parabolic subalgebras of $\mk{g}$ are parametrized by subsets of $\Uplambda$ modulo the action of $\Aut^\mathrm{w}(\mathrm{DD}_M)$ and thus allow description within the framework of the root space decomposition. We will confine ourselves to maximal proper parabolic subalgebras only, i.e. those corresponding to subsets of cardinality $r-1$. Take any $j \in \set{1,\ldots,r}$, and denote $\Upphi_j = \Uplambda \mysetminus \set{\upalpha_j}$. We write $\Upsigma_j$ for the root subsystem of $\Upsigma$ generated by $\Upphi_j$ and $\Upsigma_j^+ = \Upsigma_j \cap \Upsigma^+$. Define $\mk{l}_j = \mk{g}_0 \oplus \bigoplus_{\upalpha \in \Upsigma_j} \mk{g}_\upalpha$ and $\mk{n}_j = \bigoplus_{\upalpha \in \Upsigma^+ \mysetminus \Upsigma_j^+} \mk{g}_\upalpha$. These subalgebras of $\mk{g}$ are reductive and nilpotent, respectively. One can easily check that $[\mk{l}_j, \mk{n}_j] \subseteq \mk{n}_j$, so $\mk{q}_j = \mk{l}_j \oplus \mk{n}_j$ is a Lie subalgebra of $\mk{g}$. In fact, it is a maximal proper parabolic subalgebra of $\mk{g}$, and the semidirect sum decomposition $\mk{q}_j = \mk{l}_j \loplus \mk{n}_j$ is called the Chevalley decomposition of $\mk{q}_j$. Every maximal proper parabolic subalgebra of $\mk{g}$ is $\Inn(\mk{g})$-conjugate to $\mk{q}_j$ for a unique $j \in \set{1,\ldots,r}$, and two subalgebras $\mk{q}_j$ and $\mk{q}_{i}$ are $\Aut(\mk{g})$-conjugate if and only if there exists an automorphism of $\DD_M$ mapping $\upalpha_j$ to $\upalpha_i$. Next, write $\mk{a}_j = \bigcap_{\upalpha \in \Upphi_j} \ker \upalpha = \Rl H^j$ and $\mk{a}^j = \mk{a} \ominus \mk{a}_j = \bigoplus_{\upalpha \in \Upphi_j} \Rl H_\upalpha$. It is straightforward to check that $\mk{l}_j = Z_{\mk{g}}(\mk{a}_j) = N_{\mk{g}}(\mk{a}_j)$. Define $\mk{m}_j = \mk{l}_j \ominus \mk{a}_j$. This is also a reductive Lie subalgebra, and the Chevalley decomposition subdivides further into $\mk{q}_j = (\mk{m}_j \oplus \mk{a}_j) \loplus \mk{n}_j \defqe \mk{m}_j \oplus \mk{a}_j \loplus \mk{n}_j$, which is called the Langlands decomposition of $\mk{q}_j$. Let us write $\mk{m}_j = \mk{z}_j \oplus \mk{g}_j$ for the (unique) Levi decomposition of $\mk{m}_j$. Here $\mk{z}_j = \mk{z}(\mk{m}_j) \subseteq \mk{k}_0$, while $\mk{g}_j = [\mk{m}_j, \mk{m}_j] = [\mk{l}_j, \mk{l}_j]$ is semisimple. One readily sees that $\mk{g}_j = (\mk{k}_0 \ominus \mk{z}_j) \oplus \mk{a}^j \oplus \bigoplus_{\upalpha \in \Upsigma_j} \mk{g}_\upalpha$ and that it is a $\uptheta$-stable subalgebra of $\mk{g}$. What is more, $\uptheta$ restricts to a Cartan involution of $\mk{g}_j$. We denote $\mk{b}_j = \mk{g}_j \cap \mk{p}$. Note that $\mk{a}^j$ is a maximal abelian subspace of $\mk{b}_j$ and the corresponding restricted root system for $\mk{g}_j$ is $\Upsigma_j$. Finally, we define $\mk{k}_j = \mk{q}_j \cap \mk{k} = \mk{m}_j \cap \mk{k}$.

Now we look at what this plethora of subalgebras and decompositions produces when lifted to $G$. First things first, we have simply connected closed Lie subgroups $A_j$ and $N_j$ of $G$ corresponding to $\mk{a}_j$ and $\mk{n}_j$, respectively. By design, $A_j$ is abelian and $N_j$ is nilpotent. Define $L_j = Z_G(\mk{a}_j)$. This is a (possibly disconnected) closed reductive Lie subgroup of $G$ with Lie algebra $\mk{l}_j$. The product $Q_j = L_j N_j \subseteq G$ is a closed Lie subgroup with Lie algebra $\mk{q}_j$, and this is a maximal proper parabolic subgroup of $G$. It is not hard to see that $L_j$ normalizes $N_j$ and they intersect trivially, so we actually have a semidirect product $Q_j = L_j \ltimes N_j$, which is the global version of the Chevalley decomposition. Next, let $G_j$ be the connected Lie subgroup corresponding to $\mk{g}_j$, write $K_j = L_j \cap K$, and define $M_j = K_j G_j$. These are all closed subgroups, $K_j$ is compact, $M_j$ is reductive, $\Lie(K_j) = \mk{k}_j$, and $\Lie(M_j) = \mk{m}_j$. We have inclusions $M_j \subset L_j \subset Q_j$, and one can show that $K_j$ is a maximal compact subgroup in all three of these groups. Finally, let $Z_j$ stand for the center of $M_j$. This is a compact subgroup of $K_j$ with Lie algebra $\mk{z}_j$. The subgroups $M_j$ and $A_j$ commute and intersect trivially, hence we have a direct product decomposition $L_j = M_j \times A_j$, which, when plugged into the global Chevalley decomposition, induces the global version of the Langlands decomposition: $Q_j = M_j \times A_j \ltimes N_j$.

Eventually, we turn our attention to $M$. The parabolic subgroup $Q_j$ acts transitively on $M$ with isotropy subgroup $K_j$ at $o$. The subgroups $A_j$ and $N_j$ give us orbits $A_j \ccdot \hspace{0.3pt} o \simeq \mb{E}$, which is a geodesic lying in the maximal flat $A \hspace{0.3pt} \ccdot o$, and $N_j \ccdot o$, which a properly embedded submanifold of the horocycle $N \hspace{-0.5pt} \ccdot o$. We have a Lie triple system $\mk{b}_j \subseteq \mk{p}$, which corresponds to a totally geodesic properly embedded submanifold $B_j = G_j \ccdot o = M_j \ccdot o \subseteq M$ often called a boundary component of $M$ in the context of the maximal Satake compactification of $M$ (see, e.g., \cite{borel_ji}). The submanifold $B_j$ is itself a symmetric space of noncompact type. It has rank $r-1$ and can be represented, for example, by the Riemannian symmetric pair $(M_j^0,K_j^0)$. We also have a totally geodesic submanifold $F_j = L_j \ccdot o \simeq B_j \times (A_j \ccdot o)$ corresponding to the Lie triple system $\mk{b}_j \oplus \mk{a}_j \subseteq \mk{p}$ (note that $F_j$ is a symmetric space and $F_j \simeq B_j \times (A_j \ccdot o)$ is its decomposition into the Riemannian product of its noncompact and Euclidean parts). Finally, we can form a commutative diagram
$$
\xymatrix{
M_j \times A_j \times N_j \ar[r]^-{\sim} \ar@{->>}[d] & Q_j \ar@{->>}[d] \ar@{->>}[dr] \\
B_j \times A_j \times N_j \ar[r]^-{\sim} & Q_j/K_j \ar[r]^(0.57){\sim} & M .
}
$$
The diffeomorphism $B_j \times A_j \times N_j \simeq M$ is called a horospherical decomposition of $M$.

\subsection{Classes of cohomogeneity-one actions.}\label{classes_of_actions} Now we describe the different types of cohomogeneity-one actions that appear in the structure results of Berndt and Tamaru.

Let $\ell \subseteq \mk{a}$ be a one-dimensional subspace. Then the connected Lie subgroup $H_\ell$ of $G$ with Lie algebra $\mk{h}_\ell = (\mk{a} \ominus \ell) \oplus \mk{n}$ is closed and its action on $M$ has cohomogeneity one and no singular orbits (in other words, its orbits form a Riemannian foliation of codimension one). Moreover, all of its orbits are isometrically congruent to each other. If $T$ is an automorphism of the Dynkin diagram $\DD_M$ and $\widehat{T}$ is the orthogonal transformation of $\mk{a}$ induced by $T$, then the actions of $H_\ell$ and $H_{\widehat{T}(\ell)}$ are orbit equivalent. Conversely, if $\ell$ and $\ell'$ are one-dimensional subspaces of $\mk{a}$ such that the actions of $H_\ell$ and $H_{\ell'}$ are orbit equivalent, then these subspaces differ by $\widehat{T}$ for some $T \in \Aut^\mathrm{w}(\DD_M)$. See \cite{berndttamarufoliations} and \cite{solonenko2021homogeneous} for more details.

Let $\ell \subseteq \mk{g}_{\upalpha_i}$ be a one-dimensional subspace, $1 \leqslant i \leqslant r$. Then the connected Lie subgroup $H_i$ of $G$ with Lie algebra $\mk{h}_i = \mk{a} \oplus (\mk{n} \ominus \ell)$ is closed and its action on $M$ has cohomogeneity one, no singular orbits, and exactly one minimal orbit. Were we to choose another line $\ell'$ in $\mk{g}_{\upalpha_i}$, the resulting action would be orbit equivalent to that for $\ell$. If $T \in \Aut^\mathrm{w}(\DD_M)$ is some automorphism mapping $\upalpha_i$ to $\upalpha_j$, the actions of $H_i$ and $H_j$ are orbit equivalent. Conversely, if we have $1 \leqslant i,j \leqslant r$ such that the actions of $H_i$ and $H_j$ are orbit equivalent, then $\upalpha_i$ and $\upalpha_j$ differ by some automorphism of $\DD_M$. Again, see \cite{berndttamarufoliations} and \cite{solonenko2021homogeneous} for details.

Let $L$ be a maximal proper connected reductive Lie subgroup of $G$ (it is automatically closed). Let $H \subseteq L$ be a closed connected subgroup acting on $M$ with cohomogeneity one. Then $H$ and $L$ have the same orbits (and thus are trivially orbit equivalent), and their orbit through $o$ is totally geodesic. Moreover, unless $M$ is isometric to a real hyperbolic space, this orbit is singular. Such actions were explicitly classified in \cite{berndttamarusingtotgeod} for $M$ irreducible. In a nutshell, the classification goes as follows. A properly embedded connected submanifold $F$ of $M$ is called reflective if the geodesic reflection in $F$ is a well-defined isometry of $M$ (equivalently, if $F$ is a connected component of the fixed point set of an involutive isometry of $M$). A reflective submanifold $F$ is necessarily totally geodesic and any of its normal spaces is itself tangent to a totally geodesic submanifold, which we sloppily denote\footnote{It depends on the choice of a point in $F$ but only up to congruence in $M$.} by $F^\perp$. If $F$ is reflective and $F^\perp$ is of rank 1 (hence a hyperbolic space), then $F$ is a totally geodesic singular orbit of a cohomogeneity-one action. Conversely, a totally geodesic singular orbit $F$ of a cohomogeneity-one action is reflective with $F^\perp$ of rank one, apart from 5 non-reflective examples, all of which are mysteriously related to the exceptional Lie group $G_2$ (and none of which will appear in this paper so they are of no concern to us). What Berndt and Tamaru did in \cite{berndttamarusingtotgeod} is they classified reflective submanifolds $F$ with $\rk(F^\perp) = 1$ for each irreducible $M$ of noncompact type.

Now fix $j \in \set{1,\ldots,r}$. Let $H_j$ be a closed connected subgroup of $I(B_j)$ acting on the boundary component $B_j$ with cohomogeneity one. Write $\mk{i}(B_j)$ for the Lie algebra of $I(B_j)$ and $\mk{h}_j \subseteq \mk{i}(B_j)$ for that of $H_j$. Note that as a symmetric space, $B_j$ can be represented by a Riemannian symmetric pair $(G_j, G_j \cap K)$. In particular, we have a morphism $G_j \twoheadrightarrow I^0(B_j)$, which induces a Lie algebra homomorphism $\mk{g}_j \twoheadrightarrow \mk{i}(B_j)$. It may fail to be an isomorphism, for the action $G_j \curvearrowright B_j$ does not have to be effective. Nonetheless, $\mk{g}_j$ is semisimple, so there is a unique ideal in it complementary to the kernel of the map $\mk{g}_j \twoheadrightarrow \mk{i}(B_j)$, hence we can identify $\mk{i}(B_j)$ with this ideal (see Subsection \ref{error correction} for a more explicit description of this ideal and discussion of the non-effectiveness of $G_j \curvearrowright B_j$). But now we have $\mk{h}_j \subseteq \mk{i}(B_j) \hookrightarrow \mk{g}_j \subseteq \mk{m}_j$ as well as the Langlands decomposition $\mk{q}_j = \mk{m}_j \oplus \mk{a}_j \loplus \mk{n}_j$, so we can form a Lie subalgebra $\mk{h}_j^\Uplambda = \mk{h}_j \oplus \mk{a}_j \loplus \mk{n}_j \subseteq \mk{q}_j$. The corresponding connected Lie subgroup $H_j^\Uplambda \subseteq Q_j$ is closed and acts on $M$ with cohomogeneity one. This action is called the canonical extension of the action $H_j \curvearrowright B_j$, and the group $H_j^\Uplambda$ itself is called the canonical extension of $H_j$. The action of $H_j^\Uplambda$ on $M$ has a singular orbit if and only if the action of $H_j$ on $B_j$ has one, and the codimension of the former in $M$ equals the codimension of the latter in $B_j$. Note that if we are given two such subgroups $H_j, H'_j \subseteq I^0(B_j)$ whose actions on $B_j$ are orbit equivalent, the resulting actions of $H_j^\Uplambda$ and ${H'_j}^\Uplambda$ on $M$ may fail to be orbit equivalent (see the example after Proposition 4.2 in \cite{berndttamarucohomogeneityone}). However, if the actions of $H_j$ and $H'_j$ are orbit equivalent by some $f \in I^0(B_j)$ (and not just $I(B_j)$), the actions of $H_j^\Uplambda$ and ${H'_j}^\Uplambda$ will in fact be orbit equivalent. For this reason we introduce the following useful notion: two isometric actions on a Riemannian manifold $Q$ are called \textbf{strongly orbit equivalent} if their orbits can be identified by some isometry $f \in I^0(Q)$. We stress that $I^0(B_j)$ is not a subgroup of $G_j$ or $M_j$ but a quotient thereof, so it is necessary to embed $\mk{i}(B_j)$ into $\mk{g}_j$ for the above construction to work, as the sum $\mk{h}_j \oplus \mk{a}_j \loplus \mk{n}_j$ does not make sense a priori (we are pointing out this small issue because it was not taken into account in the description of the canonical extension method in \cite{berdntdominguez-vazquez} and \cite{berndttamarucohomogeneityone}). We refer to \cite{berndttamarucohomogeneityone} for more details.

Finally, we describe the nilpotent construction method. Once again, fix $j \in \set{1,\ldots,r}$. The nilpotent Lie algebra $\mk{n}_j$ is graded: 
$$
\mk{n}_j = \bigoplus_{\upnu = 1}^k \mk{n}_j^\upnu, \quad \mk{n}_j^\upnu = \bigoplus_{\upalpha(H^j) = \upnu} \mk{g}_\upalpha, \quad k = \updelta(H^j),
$$
where $\updelta \in \Upsigma^+$ is the highest root. The adjoint actions of $L_j$ on $\mk{n}_j$ respects this grading. Let $\mk{v} \subseteq \mk{n}_j^1$ be a linear subspace of dimension at least 2. One can easily see that $\mk{n}_{j,\mk{v}} = \mk{n}_j \ominus \mk{v}$ is a Lie subalgebra. Its corresponding connected Lie subgroup of $G$ is closed and will be denoted by $N_{j, \mk{v}}$. We also have $N_{\mk{l}_j}(\mk{n}_{j,\mk{v}}) = N_{\mk{m}_j}(\mk{n}_{j,\mk{v}}) \oplus \mk{a}_j$, which implies $N_{L_j}(\mk{n}_{j,\mk{v}}) = N_{M_j}(\mk{n}_{j,\mk{v}}) \times A_j$. Furthermore, $N_{\mk{l}_j}(\mk{n}_{j,\mk{v}}) = \uptheta N_{\mk{l}_j}(\mk{v})$ and thus $N^0_{L_j}(\mk{n}_{j,\mk{v}}) = \Uptheta N^0_{L_j}(\mk{v})$, and the same remains true with $\mk{l}_j$ and $L_j$ replaced with $\mk{m}_j$ and $M_j$, respectively. Define a closed connected Lie subgroup $H_{j, \mk{v}} = N^0_{L_j}(\mk{n}_{j,\mk{v}}) \ltimes N_{j,\mk{v}} \subset L_j \ltimes N_j = Q_j$. It was shown in \cite{berdntdominguez-vazquez} and \cite{berndttamarucohomogeneityone} that the following assumptions are equivalent:

\begin{enumerate}[(i)]
    \item $F_j \subseteq H_{j, \mk{v}} \ccdot o$.
    \item $N^0_{L_j}(\mk{n}_{j,\mk{v}})$ acts transitively on $F_j$.
    \item $N^0_{M_j}(\mk{n}_{j,\mk{v}})$ acts transitively on $B_j$.
    \item The image of the projection of $N_{\mk{l}_j}(\mk{v})$ to $\mk{p}$ along $\mk{k}$ equals $\mk{b}_j \oplus \mk{a}_j$.
    \item The image of the projection of $N_{\mk{m}_j}(\mk{v})$ to $\mk{p}$ along $\mk{k}$ equals $\mk{b}_j$.
\end{enumerate}

Now, suppose the following two assumptions, which we sometimes call 'assumptions (1) and (2) of the nilpotent construction', are satisfied:

\begin{enumerate}
    \item The image of the projection of $N_{\mk{m}_j}(\mk{v})$ to $\mk{p}$ along $\mk{k}$ equals $\mk{b}_j$, and
    \item $N_{K_j}(\mk{n}_{j,\mk{v}}) = N_{K_j}(\mk{v})$ acts transitively on the unit sphere in $\mk{v}$.
\end{enumerate}

In this case, $H_{j, \mk{v}}$ acts on $M$ with cohomogeneity one and its orbit though $o$ is singular of codimension equal to $\dim \mk{v}$. What is more, if $\mk{v}' \subseteq \mk{n}_j^1$ is another such subspace that differs from $\mk{v}$ by $\Ad(k)$ for some $k \in K_j$, then $\mk{v}$ satisfies the above assumptions if and only if $\mk{v}'$ does, and, if they do, the actions $H_{j, \mk{v}}$ and $H_{j, \mk{v}'}$ are orbit equivalent. See \cite{berdntdominguez-vazquez} and \cite{berndttamarucohomogeneityone} for details.

Now we formulate the main result from \cite{berndttamarucohomogeneityone}, which guarantees that all cohomogeneity-one actions on irreducible symmetric spaces of noncompact type can be obtained by one of the five methods described above.

\begin{theorem}\label{maintheorem}
Let $M = G/K$ be an irreducible Riemannian symmetric space of noncompact type and rank $r$, and let $H$ be a closed connected subgroup of $G$ acting on $M$ with cohomogeneity one. Then one of the following statements holds:

\begin{enumerate}[\normalfont (1)]
    \item The orbits of $H$ form a Riemannian foliation and exactly one of the following two cases holds:
    
    \begin{enumerate}[\normalfont (i)]
        \item The action of $H$ is orbit equivalent to the action of $H_\ell$ for some one-dimensional linear subspace $\ell \subseteq \mk{a}$, where $\ell$ is determined uniquely up to the action of $\Aut^\mathrm{w}(\DD_M)$ on $\mb{P}\mk{a}$.
        \item The action of $H$ is orbit equivalent to the action of $H_i$ for some $i \in \set{1,\ldots,r}$, where $i$ is determined uniquely up to the action of $\Aut^\mathrm{w}(\DD_M)$ on $\set{1,\ldots,r}$.
    \end{enumerate}
    
    \item There exists exactly one singular orbit and one of the following two cases holds:
    
    \begin{enumerate}[\normalfont (i)]
        \item $H$ is contained in a maximal proper connected reductive Lie subgroup $L$ of $G$, $H$ and $L$ have the same orbits, and the singular orbit is totally geodesic and isometrically congruent to one of the submanifolds described in Theorems {\normalfont 3.3} and {\normalfont 4.2} in {\normalfont \cite{berndttamarusingtotgeod}}.
        \item $H$ is contained in a maximal proper parabolic subgroup $Q_j$ of $G$ (for suitable choices of $o \in M, \hspace{1pt} \mk{a} \subseteq \mk{p},$ and $\Upsigma^+ \subset \Upsigma$) for some $j \in \set{1, \ldots, r}$ and one of the following two subcases holds:
        
        \begin{enumerate}[\normalfont (a)]
            \item The action of $H$ is orbit equivalent to the canonical extension of a cohomogeneity-one action with a singular orbit on the boundary component $B_j$ of $M$.
            \item The action of $H$ is orbit equivalent to the action of a group $H_{j, \mk{v}}$ obtained by nilpotent construction applied to some subspace $\mk{v} \subseteq \mk{n}_j^1$ with $\dim \mk{v} \geqslant 2$ satisfying both assumptions of the construction.
        \end{enumerate}
    \end{enumerate}
\end{enumerate}
\end{theorem}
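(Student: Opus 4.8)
The statement is the main theorem of \cite{berndttamarucohomogeneityone}, and it packages together the classifications of \cite{berndttamarufoliations} and \cite{berndttamarusingtotgeod}, so the plan is to outline that combined strategy rather than to supply a self-contained argument. First I would invoke the standard fact, recalled in the introduction, that a cohomogeneity-one action on a Hadamard manifold — and $M$ is such — has no exceptional orbits and at most one singular orbit. This produces the basic dichotomy: either every orbit of $H$ is principal, so the orbits form a homogeneous Riemannian foliation of $M$ of codimension one, or there is exactly one singular orbit. In the foliation case I would invoke the classification of homogeneous codimension-one foliations of $M$ from \cite{berndttamarufoliations}: after conjugating $\mk{h} = \Lie(H)$ by an isometry into a convenient position relative to an Iwasawa decomposition $\mk{g} = \mk{k} \oplus \mk{a} \oplus \mk{n}$, one shows that $\mk{h}$ is isometrically congruent either to $\mk{h}_\ell = (\mk{a} \ominus \ell) \oplus \mk{n}$ for some one-dimensional subspace $\ell \subseteq \mk{a}$, or to $\mk{h}_i = \mk{a} \oplus (\mk{n} \ominus \ell)$ for some one-dimensional subspace $\ell \subseteq \mk{g}_{\upalpha_i}$, these two being mutually exclusive (the first having no minimal leaf, the second exactly one); the uniqueness clauses in (1)(i)--(ii) then follow from the orbit-equivalence criteria for the families $\set{H_\ell}$ and $\set{H_i}$ recorded in Subsection \ref{classes_of_actions}.

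Second, suppose $H$ has its (unique) singular orbit, and choose $o$ on it, so that the slice representation of the isotropy subgroup at $o$ acts transitively on the unit sphere of the normal space. The essential alternative now is whether this singular orbit is totally geodesic. If it is, one shows that $H$ is contained in a maximal proper connected reductive subgroup $L$ of $G$, that $H$ and $L$ have the same orbits, and that $L \ccdot o$ is totally geodesic; classifying such totally geodesic singular orbits then reduces, by \cite{berndttamarusingtotgeod}, to the classification of reflective submanifolds $F \subseteq M$ with $\rk(F^\perp) = 1$ — together with the five non-reflective $G_2$-related exceptions — which is carried out there by a case-by-case Lie-theoretic analysis for each irreducible $M$. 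This is alternative (2)(i).

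Third — and this is where I expect the real difficulty to lie — suppose the singular orbit is not totally geodesic. The core theorem of \cite{berndttamarucohomogeneityone} is that then, for a suitable choice of $o$, $\mk{a}$, and $\Upsigma^+$, the group $H$ is contained in a maximal proper parabolic subgroup $Q_j$; establishing this parabolicity — in effect, that a cohomogeneity-one $H$ whose singular orbit is not totally geodesic stabilises a proper boundary face of $M$ and nothing more rigid — is the principal obstacle, requiring a careful study of the orbit's geometry at infinity together with the structure theory of subgroups of $G$. Granting $H \subseteq Q_j$, I would decompose $\mk{h}$ along the Langlands decomposition $\mk{q}_j = \mk{m}_j \oplus \mk{a}_j \loplus \mk{n}_j$ and examine $\mk{h} \cap \mk{n}_j$. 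If $\mk{h} \cap \mk{n}_j = \mk{n}_j$, then $\mk{h} \supseteq \mk{a}_j \loplus \mk{n}_j$ and the projection of $\mk{h}$ to $\mk{m}_j$ surjects onto a cohomogeneity-one subalgebra of $\mk{i}(B_j)$, so the action is the canonical extension of a cohomogeneity-one action with a singular orbit on the boundary component $B_j$ — alternative (2)(ii)(a). If $\mk{h} \cap \mk{n}_j$ is a proper subalgebra, then, using the grading $\mk{n}_j = \bigoplus_\upnu \mk{n}_j^\upnu$ together with the transitivity of the slice representation, one shows that $\mk{v} \defeq \mk{n}_j \ominus (\mk{h} \cap \mk{n}_j)$ lies in the first layer $\mk{n}_j^1$, has $\dim \mk{v} \geqslant 2$, and that $H$ must coincide with $H_{j,\mk{v}} = N^0_{L_j}(\mk{n}_{j,\mk{v}}) \ltimes N_{j,\mk{v}}$ for a subspace $\mk{v}$ satisfying assumptions (1) and (2) of the nilpotent construction — alternative (2)(ii)(b). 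Finally I would check mutual exclusivity: the foliation case is disjoint from the singular-orbit case by definition, and within the latter the totally geodesic versus non-totally-geodesic split is governed — modulo the five $G_2$-related exceptions — by whether the singular orbit is reflective, so the listed possibilities are exhaustive.
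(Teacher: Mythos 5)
The paper does not prove this theorem: it is imported verbatim as the main structural result of Berndt and Tamaru, with the actual proof residing in \cite{berndttamarucohomogeneityone}, \cite{berndttamarufoliations}, and \cite{berndttamarusingtotgeod}. Your proposal correctly treats it as a citation and gives a faithful high-level outline of the cited argument (the Hadamard dichotomy, the foliation classification, the reflective-submanifold reduction for totally geodesic singular orbits, the parabolic containment, and the canonical-extension versus nilpotent-construction split), which matches how the paper handles the statement.
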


\begin{remarks}
\begin{enumerate}[1.]
    \item Here is how one goes about explicitly classifying cohomogeneity-one actions on a given irreducible symmetric space of noncompact type. Actions without singular orbits and with a totally geodesic singular orbit correspond to (1) and (2)(i), respectively, and we already have their explicit classification, so we only need to classify actions with a non-totally-geodesic singular orbit. To this end, a priori, we need to find all actions arising through the canonical extension and nilpotent construction methods for each $j \in \set{1,\ldots,r}$. However, the problem is alleviated by the fact that if there exists an automorphism of $\DD_M$ mapping $\upalpha_i$ to $\upalpha_j$, then we need only deal with these two constructions for $i$. This follows from the fact that any $T \in \Aut^\mathrm{w}(\DD_M)$ gives an automorphism of $\Upsigma$ as a root system, and any automorphism of $\Upsigma$ is given by $(\restr{\Ad(k)}{\mk{a}})^*$, where $k \in N_{\widetilde{K}}(\mk{a})$ and $\widetilde{K}$ is the isotropy subgroup of $I(M)$ at $o$. If we start with $T \in \Aut^\mathrm{w}(\DD_M)$ mapping $\upalpha_i$ to $\upalpha_j$ and find $k \in N_{\widetilde{K}}(\mk{a})$ that gives the corresponding automorphism of $\Upsigma$, then $k$ maps $B_j$ onto $B_i$, $\mk{n}_j$ onto $\mk{n}_i$, etc., so it provides an orbit equivalence between actions canonically extended from $B_j$ and $B_i$, and the same for the nilpotent construction (this argument remains valid if $M$ is reducible provided that the Riemannian metric comes from the Killing form). Finding all actions arising by canonical extension from $B_j$ entails knowing the classification of cohomogeneity-one actions on $B_j$ up to strong orbit equivalence. Since $B_j$ has rank lower than $M$, an inductive algorithm is sometimes possible (the main problem here is that $B_j$ may happen to be reducible\footnote{Precisely when removal of $\upalpha_j$ makes $\DD_M$ disconnected.}, and the classification of cohomogeneity-one actions on reducible spaces has not been fully developed yet). Finding all actions arising from the nilpotent construction basically boils down to a problem in representation theory of reductive Lie groups. We will discuss this in a bit more detail in the next subsection. Having found all such actions for all $j$, one also needs to tell which of them are orbit equivalent or have a totally geodesic singular orbit.
    \item Cases (2)(i), (2)(ii)(a), and (2)(ii)(b) may overlap. For example, an action obtained by nilpotent construction for some $j$ may have a totally geodesic singular orbit or be orbit equivalent to some action canonically extended from some $B_i$. We will see examples of such behavior later in the paper.
    \item It is usually fairly straightforward to find what the Lie algebras $\mk{m}_j$ and $\mk{k}_j$ and thus the groups $M_j^0$ and $K_j^0$ look like. It may be a bit harder, however, to do so for the full groups $M_j$ and $K_j$. In practical terms, it means that in the nilpotent construction it may be easier to find all subspaces $\mk{v} \subseteq \mk{n}_j^1$ satisfying the two assumptions up to $K_j^0$-congruence but not $K_j$-congruence. To make things worse, even if two subspaces $\mk{v}, \mk{v}' \subseteq \mk{n}_j^1$ are not $K_j$-congruent, they may still, in theory, produce orbit equivalent actions, so we may end up with a larger list of actions and have to discard some of them. However, this scarcely poses an actual problem, for the reality is that whenever we see an action arising by nilpotent construction, chances are it either has a totally geodesic singular orbit or arises by canonical extension from some boundary component of $M$.
    \item Finally, note that if $M$ has rank 1 and is, therefore, isometric to a hyperbolic space over $\Rl, \Cx, \Hq,$ or $\mb{O}$, then it has $\Uplambda = \set{\upalpha_1}$ and the boundary component $B_1$ is a point. For this reason, the canonical extension method produces exactly one cohomogeneity-one action, which is nothing but the action of the isotropy subgroup $K$. Moreover, assumption (1) of the nilpotent construction is trivially satisfied for every $\mk{v} \subseteq \mk{n}_1^1$. Historically, the rank-1 case had mostly been handled before the advent of the canonical extension and nilpotent construction methods (see \cite{berndt_bruck} and \cite{berndttamarurankone}), although the case of $\Hq H^n$ stayed unresolved until recently (see \cite{protohomogeneous}).
\end{enumerate}
\end{remarks}

Theorem \ref{maintheorem} gives the classification of connected properly embedded homogeneous hypersurfaces in irreducible symmetric spaces of noncompact type up to isometric congruence, which justifies the title of the present paper. Indeed, given a connected complete Riemannian manifold $M$, connected properly embedded homogeneous hypersurfaces in $M$ are the same as nonsingular orbits of proper isometric cohomogeneity-one actions on $M$ by connected Lie groups. An orbit equivalence between two such actions provides an isometric congruence between their corresponding orbits by design. Conversely, let $H$ be a connected Lie group acting properly, isometrically, and with cohomogeneity one on $M$, and let $F \subseteq M$ be any orbit of $H$. Then all the other orbits are uniquely determined by $F$ as the connected components of the sets $F_r = \set{x \in M \mid \dist(x,F) = r}, \; r > 0$. (Informally, these are equidistant 'tubes' of various radii around $F$; $F_r$ is connected when $F$ is nonprincipal but may have two connected components otherwise.) Consequently, if we are given another such action $H' \curvearrowright M$ and an orbit $F' \subseteq M$ of $H'$, an isometry $f \in I(M)$ mapping $F$ onto $F'$ must map $F_r$ onto $F'_r$ for each $r > 0$, so $f$ is an orbit equivalence between the actions of $H$ and $H'$. If $F$ is a singular orbit and there are no other singular orbits (which is always the case when $M$ is a noncompact symmetric space, see \cite[Section 2]{berndt_bruck}), any orbit equivalence of the action of $H$ with itself preserves $F$ and thus each $F_r$, which means that no two distinct orbits of $H$ are congruent. Finally, considering the case of cohomogeneity-one actions without singular orbits, that is, cases (1)(i) and (1)(ii) in Theorem \ref{maintheorem}, it was shown in \cite{berndttamarufoliations} that all orbits of $H_\ell$ are congruent, whereas $H_i$ has a unique minimal orbit $F$, and $F_r$ consists of two orbits (for each $r>0$) that are congruent to each other and to no other orbits. Note that, given a cohomogeneity-one action $H \curvearrowright M$ and its principal orbit $F \subseteq M$, the singular points of the action are precisely the focal points of $F$.

Altogether, the classification of homogeneous hypersurfaces in an irreducible noncompact symmetric space $M$ goes like this. Every connected properly embedded homogeneous hypersurface in $M$ is isometrically congruent to a principal orbit of a cohomogeneity-one action on $M$ of one of the 5 types described in Theorem \ref{maintheorem}. Congruent hypersurfaces correspond to orbit equivalent actions. Each $H_\ell$ ($\ell \in \mb{P}\mk{a}/\Aut^\mathrm{w}(\DD_M)$) in (1)(i) gives precisely one congruence class of homogeneous hypersurfaces, each $H_i$ ($i \in \set{1, \ldots, r}\hspace{-1.5pt}/\Aut^\mathrm{w}(\DD_M)$) in (1)(ii) gives a one-parameter family of classes parametrized by $t \geqslant 0$, and each action in (2) gives a one-parameter family parametrized by $t>0$.

\subsection{Generalizing the nilpotent construction problem.}\label{nilpotent construction} Since the nilpotent construction is so involved and presents one of the main hindrances to the classification of cohomogeneity-one actions on irreducible symmetric spaces of noncompact type, it is worth looking at it from a slightly more general perspective and introducing some new terminology to talk about it more easily.

Let $G$ be a reductive Lie group with Lie algebra $\mk{g}$, a maximal compact subgroup $K \subseteq G$, and a Cartan decomposition $\mk{g} = \mk{k} \oplus \mk{p}$ (we are using the notation from \cite[Section VII.2]{knapp} in this subsection). Let $V$ be a finite-dimensional real representation of $G$. Fix some $K$-invariant inner product on $V$ and let $\mk{v} \subseteq V$ be a subspace with $\dim \mk{v} \geqslant 2$. Inspired by \cite{protohomogeneous}, we call $\mk{v}$ \textbf{protohomogeneous} if there exists a subgroup of $K$ preserving $\mk{v}$ and acting transitively on the unit sphere in $\mk{v}$. We call $\mk{v}$ \textbf{admissible} if the projection of $N_{\mk{g}}(\mk{v})$ to $\mk{p}$ along $\mk{k}$ is onto, i.e. its image equals the whole $\mk{p}$. The problem, which we call \textbf{the generalized nilpotent construction problem}, is to classify subspaces of $V$ that are both protohomogeneous and admissible up to the action of $K$. One readily sees how this generalizes the nilpotent construction problem by taking $G = M_j$ (or $L_j$), $K = K_j, \hspace{1.5pt} \mk{g} = \mk{m}_j = \mk{k}_j \oplus \mk{b}_j$ (or $\mk{l}_j = \mk{k}_j \oplus (\mk{b}_j \oplus \mk{a}_j)$), and $V = \mk{n}_j^1$. Admissibility and protohomogeneity become assumptions (1) and (2) of the nilpotent construction, respectively. Note that every subspace of $V$ is trivially admissible when $G$ is compact.

This problem is solved for a few representations. For instance, Berndt and Br\"{u}ck handled the cases $(G, V) = (\SO(n), \Rl^n), \hspace{1pt} (\U(n), \Cx^n), \hspace{1pt} (\Spin(7), \text{the spin representation} \; \Rl^8)$ in \cite{berndt_bruck}, while D\'{i}az-Ramos et al. dealt with $(G,V) = (\Sp(n)\Sp(1), \Hq^n)$ in \cite{protohomogeneous}. See also \cite{berdntdominguez-vazquez} and \cite{berndttamarucohomogeneityone} for some examples with $G$ noncompact. The latter of these two papers contains also a general result on the nilpotent construction problem that facilitates the search for admissible and protohomogeneous subspaces for certain noncompact symmetric spaces and choices of $j$ (see Proposition 5 there), and it may, in theory, be possible to extend this result to the generalized nilpotent construction problem. When dealing with the nilpotent construction on some rank-2 spaces later in the paper, we will solve this problem for a few more representations. We want to stress, however, that almost all attempts to solve the problem have so far been ad-hoc, and it does not seem feasible to proceed like this in the future. The more complex the representation of $G$ on $V$ is, the more difficult the problem becomes (for instance, it took the authors of \cite{protohomogeneous} an entire paper to solve the problem for $\Sp(n)\Sp(1) \curvearrowright \Hq^n$!). We believe that a more holistic and general approach to the problem is necessary if one wants to solve the nilpotent construction problem for all symmetric spaces of noncompact type.

\subsection{Error correction.}\label{error correction} Before proceeding to the classification of cohomogeneity-one actions on concrete symmetric spaces, we would like to point out a minor error in \cite[Paragraph 13.2]{submanifoldsholonomy}, \cite[Section 3]{hyperpolarfoliations}, and \cite[Section 2]{berndttamarucohomogeneityone} and correct it.

It is stated in the above works that $\mk{g}_j \cap \mk{k} = [\mk{b}_j, \mk{b}_j]$. It is not true in general. For example, if we take $\mk{g} = \mk{sl}(3, \mathbb{H})$ with $\uptheta(X) = X^* = -\overbar{X}$, $\mk{a} = \set{\text{real traceless diagonal matrices}}$, and $\upalpha_j$ any of the two simple roots of $\Upsigma$, then $\mk{g}_j \simeq \mk{sl}(2, \mathbb{H}) \hspace{0.5pt} \oplus \hspace{0.8pt} \mk{sp}(1)$, so any Cartan involution on it will be trivial on the compact summand $\mk{sp}(1)$. Consequently, we have $[\mk{b}_j, \mk{b}_j] \oplus \mk{b}_j = \mk{sl}(2, \mathbb{H})$ but $\mk{g}_j \cap \mk{k} = [\mk{b}_j, \mk{b}_j] \oplus \mk{sp}(1)$. The reason for this issue is that in general $\mk{g}_j$ may have nonzero compact ideals. In fact, we have the following

\begin{proposition}\label{errorcorrectionproposition}
Let $\mk{g}$ be a semisimple Lie algebra with a Cartan involution $\uptheta$, and let $\mk{g} = \bigoplus_{\upmu = 1}^k \mk{g}_\upmu \oplus \bigoplus_{\upnu = k+1}^n \mk{g}_\upnu$ be its decomposition into simple ideals, where the first $k$ ideals are noncompact and the rest are compact. Then $\uptheta$ respects this decomposition and is the identity on the compact ideals. Moreover, if we write $\mk{g}_\upmu = \mk{k}_\upmu \oplus \mk{p}_\upmu$ for the induced Cartan decomposition of each factor, then \vspace{-0.2em}
$$
[\mk{p}, \mk{p}] = \bigoplus_{\upmu=1}^k [\mk{p}_\upmu, \mk{p}_\upmu] = \bigoplus_{\upmu=1}^k \mk{k}_\upmu.
$$
\end{proposition}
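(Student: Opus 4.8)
The plan is to reduce every assertion to a statement about a single simple ideal and then invoke elementary structure theory. I will use throughout that the decomposition of a semisimple Lie algebra into simple ideals is canonical (the simple ideals are exactly the minimal ideals), that distinct simple ideals are orthogonal with respect to the Killing form $B$ and commute with one another, that the restriction of $B$ to a simple ideal is the Killing form of that ideal — negative definite precisely when the ideal is compact — and that $\uptheta$ being a Cartan involution means $B_\uptheta$ is positive definite.

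First I would show $\uptheta$ respects the decomposition. Being an automorphism, $\uptheta$ permutes the simple ideals, and being an involution it either fixes a given ideal or transposes it with an isomorphic one. The latter cannot happen: if $\uptheta(\mk{g}_i) = \mk{g}_{i'}$ with $i \neq i'$, then for $0 \neq X \iin \mk{g}_i$ we would have $\uptheta X \iin \mk{g}_{i'}$, so $B_\uptheta(X,X) = -B(X, \uptheta X) = 0$ by $B$-orthogonality of distinct ideals, contradicting positive definiteness of $B_\uptheta$. Hence $\uptheta$ restricts to an involutive automorphism of each $\mk{g}_i$, yielding an induced Cartan decomposition $\mk{g}_i = \mk{k}_i \oplus \mk{p}_i$ with $\mk{k} = \bigoplus_i \mk{k}_i$ and $\mk{p} = \bigoplus_i \mk{p}_i$. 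If $\mk{g}_\nu$ is compact, then for $X \iin \mk{p}_\nu$ we have $\uptheta X = -X$, so $B_\uptheta(X,X) = B(X,X) \leqslant 0$ because $B$ is negative definite on $\mk{g}_\nu$; positive definiteness of $B_\uptheta$ forces $\mk{p}_\nu = 0$, i.e. $\uptheta|_{\mk{g}_\nu} = \Id$ and $\mk{g}_\nu \subseteq \mk{k}$. The same computation shows $\mk{p}_\mu \neq 0$ for each noncompact factor, since $\mk{p}_\mu = 0$ would make $B$ negative definite on $\mk{g}_\mu$.

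It remains to identify $[\mk{p},\mk{p}]$. From the previous step $\mk{p} = \bigoplus_{\upmu=1}^k \mk{p}_\upmu$, and since $[\mk{g}_i,\mk{g}_{i'}] = 0$ for $i \neq i'$ we get $[\mk{p},\mk{p}] = \bigoplus_{\upmu=1}^k [\mk{p}_\upmu,\mk{p}_\upmu]$. So it suffices to prove $[\mk{p}_\upmu,\mk{p}_\upmu] = \mk{k}_\upmu$ for each noncompact simple $\mk{g}_\upmu$. Using the bracket relations of a Cartan decomposition together with the Jacobi identity, one checks routinely that $[\mk{p}_\upmu,\mk{p}_\upmu] \oplus \mk{p}_\upmu$ is an ideal of $\mk{g}_\upmu$; it is nonzero because $\mk{p}_\upmu \neq 0$, so simplicity of $\mk{g}_\upmu$ forces it to be all of $\mk{g}_\upmu$, whence $[\mk{p}_\upmu,\mk{p}_\upmu] = \mk{k}_\upmu$. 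Summing over $\upmu$ gives the displayed identity.

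The argument is entirely standard and I do not anticipate a genuine obstacle; the only points requiring a moment's care are ruling out a nontrivial permutation of mutually isomorphic simple ideals by $\uptheta$ — handled above via positive definiteness of $B_\uptheta$ — and being careful to invoke the Killing form $B$, not $B_\uptheta$, wherever orthogonality or definiteness on ideals is used.
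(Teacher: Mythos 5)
Your proof is correct and follows the same standard route as the paper, which simply declares everything except $[\mk{p}_\upmu,\mk{p}_\upmu]=\mk{k}_\upmu$ straightforward and refers that one step to Knapp's exercises. Your self-contained argument for that step — observing via the Jacobi identity that $[\mk{p}_\upmu,\mk{p}_\upmu]\oplus\mk{p}_\upmu$ is a nonzero ideal of the simple algebra $\mk{g}_\upmu$ and invoking simplicity — is exactly the expected filling-in of that reference, and your use of positive definiteness of $B_\uptheta$ to rule out $\uptheta$ transposing distinct ideals is also sound.
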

\vspace{-1.5em}
\begin{proof}
The only thing which is not straightforward here is that $[\mk{p}_\upmu, \mk{p}_\upmu] = \mk{k}_\upmu$ for $1 \leqslant \upmu \leqslant k$. One possible way to show this is explained in \cite[Section VI, problems 22-24]{knapp}.
\end{proof}

Here and in the future we refer to the sum of all (non)compact ideals of a semisimple Lie algebra as its \textbf{(non)compact part}.

\begin{corollary}\label{compactpart}
In proposition {\normalfont \ref{errorcorrectionproposition}}, the compact part $\bigoplus_{\upnu = k+1}^n \mk{g}_\upnu$ of $\mk{g}$ can be described as $Z_\mk{k}(\mk{p}) = Z_\mk{g}(\mk{p})$.
\end{corollary}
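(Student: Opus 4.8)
The plan is to reduce everything to the simple factors and let Proposition \ref{errorcorrectionproposition} do the real work. First I would record the easy structural consequences: since $\uptheta$ is the identity on each compact ideal $\mk{g}_\upnu$ with $\upnu > k$, those ideals lie inside $\mk{k}$ and meet $\mk{p}$ trivially, so $\mk{p} = \bigoplus_{\upmu = 1}^{k} \mk{p}_\upmu$, where $\mk{p}_\upmu \subseteq \mk{g}_\upmu$ is the $\mk{p}$-part of the induced Cartan decomposition of the $\upmu$-th noncompact factor.

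Next I would observe that the centralizer splits along the decomposition into simple ideals. Writing $X = \sum_i X_i$ with $X_i \in \mk{g}_i$, and using that distinct ideals commute and that each $\mk{g}_i$ is an ideal, one gets $[X,Y] = 0$ for all $Y \in \mk{p}$ if and only if each $X_i$ centralizes $\mk{p}$; and since $X_i$ automatically kills $\mk{p}_{i'}$ for $i' \ne i$, this is equivalent to $X_i \in Z_{\mk{g}_i}(\mk{p}_i)$. Hence $Z_\mk{g}(\mk{p}) = \bigoplus_{i=1}^{n} Z_{\mk{g}_i}(\mk{p}_i)$. For a compact ideal we have $\mk{p}_\upnu = 0$, so $Z_{\mk{g}_\upnu}(\mk{p}_\upnu) = \mk{g}_\upnu$; thus everything comes down to showing $Z_{\mk{g}_\upmu}(\mk{p}_\upmu) = 0$ for each noncompact simple factor $\mk{g}_\upmu$, $1 \leqslant \upmu \leqslant k$.

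This last claim is the crux, and it is exactly where Proposition \ref{errorcorrectionproposition} is invoked. If $X \in \mk{g}_\upmu$ centralizes $\mk{p}_\upmu$, then by the Jacobi identity $[X,[Y,Z]] = [[X,Y],Z] + [Y,[X,Z]] = 0$ for all $Y, Z \in \mk{p}_\upmu$, so $X$ also centralizes $[\mk{p}_\upmu, \mk{p}_\upmu]$, which by the Proposition equals $\mk{k}_\upmu$. Therefore $X$ centralizes $\mk{k}_\upmu \oplus \mk{p}_\upmu = \mk{g}_\upmu$, forcing $X \in Z(\mk{g}_\upmu) = 0$ since $\mk{g}_\upmu$ is simple. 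This yields $Z_\mk{g}(\mk{p}) = \bigoplus_{\upnu = k+1}^{n} \mk{g}_\upnu$, i.e. the compact part of $\mk{g}$. Finally, since this sum is contained in $\mk{k}$, intersecting with $\mk{k}$ does nothing, so $Z_\mk{k}(\mk{p}) = \mk{k} \cap Z_\mk{g}(\mk{p}) = Z_\mk{g}(\mk{p})$, which is the asserted description.

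I do not expect a genuine obstacle here: the argument is essentially bookkeeping once Proposition \ref{errorcorrectionproposition} is available. The only point requiring a little care is the passage $Z_{\mk{g}_\upmu}(\mk{p}_\upmu) = 0$, and even that is immediate from the identity $[\mk{p}_\upmu,\mk{p}_\upmu] = \mk{k}_\upmu$ together with simplicity of $\mk{g}_\upmu$; the rest is the standard fact that centralizers respect a decomposition into ideals.
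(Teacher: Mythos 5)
Your argument is correct and is essentially the paper's own proof: the key step in both is that an element of $Z_{\mk{g}_\upmu}(\mk{p}_\upmu)$ also centralizes $[\mk{p}_\upmu,\mk{p}_\upmu] = \mk{k}_\upmu$ and hence lies in the (trivial) center of the simple factor $\mk{g}_\upmu$. You merely spell out the routine bookkeeping (splitting the centralizer along the ideal decomposition and noting the compact factors meet $\mk{p}$ trivially) that the paper dismisses with ``the rest follows.''
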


\begin{proof}
For each $\upmu \in \set{1, \ldots, k}$, if $X$ lies in $Z_{\mk{g}_\upmu}(\mk{p}_\upmu)$, then it also commutes with $[\mk{p}_\upmu, \mk{p}_\upmu] = \mk{k}_\upmu$ and hence lies in the center of $\mk{g}_\upmu$, which is zero. The rest follows.
\end{proof}

This issue with $\mk{g}_j$ also means that the action of $G_j$ on the boundary component $B_j$ may fail badly to be effective: if there is a nonzero compact ideal in $\mk{g}_j$, its corresponding Lie subgroup will act trivially on $B_j$. So what we need is to exclude the compact ideals of $\mk{g}_j$ from consideration. We will give an explicit description of the noncompact part of $\mk{g}_j$ in terms of the restricted root space decomposition of $\mk{g}$.

Define $\boldsymbol{\widetilde{\mk{g}}_j}$ to be Lie subalgebra of $\mk{g}$ generated by $\mk{g}_\upalpha$'s with $\upalpha \in \Upsigma_j$. Clearly, $\widetilde{\mk{g}}_j \subseteq \mk{g}_j$. Since any $X \in \mk{g}_\upalpha$ satisfies $[X, \uptheta X] = -||X||^2 H_\upalpha$, we have $\mk{a}^j \subseteq \widetilde{\mk{g}}_j$, so we can write $\widetilde{\mk{g}}_j = (\widetilde{\mk{g}}_j \cap \mk{k}_0) \oplus \mk{a}^j \oplus \bigoplus_{\upalpha \in \Upsigma_j} \mk{g}_\upalpha$. Observe also that $(\widetilde{\mk{g}}_j \cap \mk{k}_0) \oplus \mk{a}^j = \sum_{\upalpha \in \Upsigma_j^+} [\mk{g}_\upalpha, \mk{g}_{-\upalpha}]$.

\begin{proposition}
The noncompact part of $\mk{g}_j$ is precisely $\widetilde{\mk{g}}_j$, while the compact part is $Z_{\mk{k}_0}(\mk{b}_j) \ominus \mk{z}_j$. The compact and noncompact parts of $\mk{g}_j$ are mutually orthogonal (with respect to $B_\uptheta$).
\end{proposition}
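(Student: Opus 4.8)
The plan is to deduce everything from Proposition~\ref{errorcorrectionproposition} and Corollary~\ref{compactpart} applied to the semisimple Lie algebra $\mk{g}_j$ equipped with the Cartan involution $\uptheta|_{\mk{g}_j}$, whose associated Cartan decomposition is $\mk{g}_j = (\mk{g}_j \cap \mk{k}) \oplus \mk{b}_j$. Those two results immediately tell us that the noncompact part of $\mk{g}_j$ is $[\mk{b}_j, \mk{b}_j] \oplus \mk{b}_j$, that the compact part is $Z_{\mk{g}_j}(\mk{b}_j) = Z_{\mk{g}_j \cap \mk{k}}(\mk{b}_j)$, and that these two ideals are orthogonal with respect to the Killing form of $\mk{g}_j$. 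Hence the substantive tasks are: (a) to identify $[\mk{b}_j, \mk{b}_j] \oplus \mk{b}_j$ with $\widetilde{\mk{g}}_j$; (b) to identify $Z_{\mk{g}_j}(\mk{b}_j)$ with $Z_{\mk{k}_0}(\mk{b}_j) \ominus \mk{z}_j$; and (c) to upgrade the orthogonality from the Killing form of $\mk{g}_j$ to $B_\uptheta$ (this needs a separate argument, since $\mk{g}_j$ is not an ideal of $\mk{g}$ and so the Killing form of $\mk{g}_j$ is not the restriction of $B$).

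For (a), I would first intersect the $\uptheta$-stable decomposition $\mk{g}_j = (\mk{k}_0 \ominus \mk{z}_j) \oplus \mk{a}^j \oplus \bigoplus_{\upalpha \in \Upsigma_j} \mk{g}_\upalpha$ recorded in the previous subsection with $\mk{p}$, which yields $\mk{b}_j = \mk{a}^j \oplus \bigoplus_{\upalpha \in \Upsigma_j^+} \mk{p}_\upalpha$; since $\mk{a}^j$ and each $\mk{p}_\upalpha \subseteq \mk{g}_\upalpha \oplus \mk{g}_{-\upalpha}$ with $\upalpha \in \Upsigma_j$ lie in $\widetilde{\mk{g}}_j$ by construction and $\widetilde{\mk{g}}_j$ is a subalgebra, we get $[\mk{b}_j, \mk{b}_j] \oplus \mk{b}_j \subseteq \widetilde{\mk{g}}_j$. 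For the reverse inclusion I would use that for $\upalpha \in \Upsigma_j$ one has $H_\upalpha \in \mk{a}^j \subseteq \mk{b}_j$ (as $\mk{a}^* \ni \upalpha \mapsto H_\upalpha$ is linear and $\Upsigma_j$ is spanned by $\Upphi_j$) and $\mk{k}_\upalpha = [H_\upalpha, \mk{p}_\upalpha]$, so $\mk{k}_\upalpha \subseteq [\mk{b}_j, \mk{b}_j]$ and therefore each generating subspace $\mk{g}_\upalpha$ of $\widetilde{\mk{g}}_j$ lies in $\mk{k}_\upalpha \oplus \mk{p}_\upalpha \subseteq [\mk{b}_j, \mk{b}_j] \oplus \mk{b}_j$; as $[\mk{b}_j, \mk{b}_j] \oplus \mk{b}_j$ is a subalgebra (being the noncompact part of $\mk{g}_j$), this forces $\widetilde{\mk{g}}_j \subseteq [\mk{b}_j, \mk{b}_j] \oplus \mk{b}_j$.

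For (b), I would first check that both $\mk{z}_j$ and $Z_{\mk{g}_j}(\mk{b}_j)$ sit inside $Z_{\mk{k}_0}(\mk{b}_j)$ and intersect trivially: $\mk{z}_j = \mk{z}(\mk{m}_j) \subseteq \mk{k}_0$ centralizes $\mk{m}_j \supseteq \mk{b}_j$; and $Z_{\mk{g}_j}(\mk{b}_j)$, being compact, lies in $\mk{k}$, while it centralizes $\mk{a}^j$ and, being contained in $\mk{l}_j = Z_{\mk{g}}(\mk{a}_j)$, also $\mk{a}_j$, hence $\mk{a}$, so it lies in $\mk{g}_0 \cap \mk{k} = \mk{k}_0$; their intersection is contained in $\mk{z}(\mk{m}_j) \cap [\mk{m}_j, \mk{m}_j] = 0$. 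For the opposite inclusion, given $Z \in Z_{\mk{k}_0}(\mk{b}_j)$, I would use $\mk{k}_0 = \mk{z}_j \oplus (\mk{k}_0 \ominus \mk{z}_j) \subseteq \mk{m}_j$ to write $Z = Z_0 + Z_1$ along $\mk{m}_j = \mk{z}_j \oplus \mk{g}_j$, note $Z_1 \in \mk{g}_j \cap \mk{k}$, and compute $[Z_1, \mk{b}_j] = [Z, \mk{b}_j] - [Z_0, \mk{b}_j] = 0$, so $Z_1 \in Z_{\mk{g}_j}(\mk{b}_j)$. This gives $Z_{\mk{k}_0}(\mk{b}_j) = \mk{z}_j \oplus Z_{\mk{g}_j}(\mk{b}_j)$ as vector spaces, which together with the orthogonality from (c) becomes the assertion $Z_{\mk{g}_j}(\mk{b}_j) = Z_{\mk{k}_0}(\mk{b}_j) \ominus \mk{z}_j$.

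For (c) and the residual orthogonalities I would avoid appealing to orthogonality of complementary ideals (valid only for the Killing form of $\mk{g}_j$) and instead argue directly from invariance of $B$: if $Z$ centralizes $\mk{b}_j$, then $B(Z, [X, Y]) = B([Z, X], Y) = 0$ for all $X, Y \in \mk{b}_j$, so $Z_{\mk{g}_j}(\mk{b}_j) \perp [\mk{b}_j, \mk{b}_j]$, and $Z_{\mk{g}_j}(\mk{b}_j) \subseteq \mk{k}$ is orthogonal to $\mk{b}_j \subseteq \mk{p}$ by the Cartan decomposition; since $B$ and $B_\uptheta$ differ only by a sign on each of $\mk{k}$ and $\mk{p}$, this yields $\widetilde{\mk{g}}_j \perp Z_{\mk{g}_j}(\mk{b}_j)$ with respect to $B_\uptheta$, i.e. the orthogonality of the compact and noncompact parts. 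The identical computation with $Z \in \mk{z}(\mk{m}_j)$ and $[X, Y] \in [\mk{m}_j, \mk{m}_j] \supseteq \mk{g}_j$ gives $\mk{z}_j \perp Z_{\mk{g}_j}(\mk{b}_j)$, which completes (b). The only point that requires genuine care throughout is keeping the three bilinear forms in play straight — $B$, $B_\uptheta$, and the Killing form of $\mk{g}_j$ — and confirming that Proposition~\ref{errorcorrectionproposition} applies, which it does since the excerpt already records that $\mk{g}_j$ is semisimple and $\uptheta$ restricts to a Cartan involution on it; everything else is routine bookkeeping with the root-space description of the subalgebras involved.
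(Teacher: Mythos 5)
Your proof is correct, and it reaches the key identification by a genuinely different route from the paper's. The paper never uses the description of the noncompact part as $[\mk{b}_j,\mk{b}_j]\oplus\mk{b}_j$; instead it characterizes $\widetilde{\mk{g}}_j$ inside $\mk{g}_j$ as the orthogonal complement of the compact part, by proving both that $Z_{\mk{k}_0}(\mk{b}_j)\perp\widetilde{\mk{g}}_j$ and, conversely, that any $X\in\mk{k}_0$ orthogonal to $\widetilde{\mk{g}}_j$ centralizes every $\mk{g}_\upalpha$ with $\upalpha\in\Upsigma_j$ (via $\|[X,Y]\|^2=\cross{X}{[Y,\uptheta[X,Y]]}=0$); it then observes that $\widetilde{\mk{g}}_j$ is an ideal because it commutes with this complement, and hence must be the noncompact part. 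You instead take the equality of the noncompact part with $[\mk{b}_j,\mk{b}_j]\oplus\mk{b}_j$ from Proposition \ref{errorcorrectionproposition} and verify $[\mk{b}_j,\mk{b}_j]\oplus\mk{b}_j=\widetilde{\mk{g}}_j$ by a two-sided inclusion, the nontrivial direction resting on $H_\upalpha\in\mk{a}^j$ and $\mk{k}_\upalpha=[H_\upalpha,\mk{p}_\upalpha]$ for $\upalpha\in\Upsigma_j$ — a more computational but entirely sound argument that makes the ideal property of $\widetilde{\mk{g}}_j$ automatic rather than an afterthought. Likewise, the paper obtains the splitting $Z_{\mk{k}_0}(\mk{b}_j)=\mk{z}_j\oplus^\perp Z_{\mk{g}_j}(\mk{b}_j)$ from a chain of centralizer identities ($Z_{\mk{k}_0}(\mk{b}_j)=Z_{\mk{m}_j}(\mk{b}_j)=Z_{\mk{k}_0}(\widetilde{\mk{g}}_j)$, etc.), which you replace by directly decomposing an element of $Z_{\mk{k}_0}(\mk{b}_j)$ along $\mk{m}_j=\mk{z}_j\oplus\mk{g}_j$. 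Your explicit care in distinguishing the Killing form of $\mk{g}_j$ from $B$ and $B_\uptheta$ addresses a subtlety the paper sidesteps (its orthogonality computations are phrased directly in terms of $B$, so nothing is lost there either). In short: both proofs are valid; yours is more self-contained and direct at the cost of a few extra root-space computations, while the paper's buys the auxiliary centralizer identities as a by-product and avoids having to unwind $[\mk{b}_j,\mk{b}_j]$ explicitly.
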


\begin{proof}
We start by looking at $Z_{\mk{k}_0}(\mk{b}_j)$ and proving a number of alternative expressions for it, namely $Z_{\mk{k}_0}(\mk{b}_j) = Z_{\mk{k}_0}(\bigoplus_{\upalpha \in \Upsigma_j} \mk{g}_\upalpha) = Z_{\mk{k}_0}(\widetilde{\mk{g}}_j) = Z_{\mk{m}_j}(\mk{b}_j) = Z_{\mk{m}_j}(\bigoplus_{\upalpha \in \Upsigma_j} \mk{g}_\upalpha) = Z_{\mk{m}_j}(\widetilde{\mk{g}}_j)$. It suffices to show that the smallest of these six centralizers, $Z_{\mk{k}_0}(\widetilde{\mk{g}}_j)$, coincides with the two largest ones, $Z_{\mk{m}_j}(\mk{b}_j)$ and $Z_{\mk{m}_j}(\bigoplus_{\upalpha \in \Upsigma_j} \mk{g}_\upalpha)$.

Take $X \in Z_{\mk{m}_j}(\mk{b}_j)$. Since $\mk{a}^j \subseteq \mk{b}_j$, $X$ cannot have nonzero components in any of $\mk{g}_\upalpha$, $\upalpha \in \Upsigma_j$, so $X \in \mk{k}_0 \oplus \mk{a}^j = \mk{g}_0 \cap \mk{m}_j$. Let $Y \in \mk{g}_\upalpha$, $\upalpha \in \Upsigma_j$. Then $Y - \uptheta Y \in \mk{b}_j$, so $[X, Y - \uptheta Y] = 0$, which means that $[X, Y] = [X, \uptheta Y]$. But $[X, Y] \in \mk{g}_\upalpha$, whereas $[X, \uptheta Y] \in \mk{g}_{-\upalpha}$, so both are zero, which implies that $X$ commutes with $\bigoplus_{\upalpha \in \Upsigma_j} \mk{g}_\upalpha$ and hence with the subalgebra generated by it, which is $\widetilde{\mk{g}}_j$. Finally, if $X$ had a nonzero component in $\mk{a}^j$, it would not commute with some $\mk{g}_\upalpha$, $\upalpha \in \Upsigma_j$. We conclude that $X \in Z_{\mk{k}_0}(\widetilde{\mk{g}}_j)$, so $Z_{\mk{m}_j}(\mk{b}_j) = Z_{\mk{k}_0}(\widetilde{\mk{g}}_j)$. Similar arguments show that $Z_{\mk{m}_j}(\bigoplus_{\upalpha \in \Upsigma_j} \mk{g}_\upalpha) = Z_{\mk{k}_0}(\widetilde{\mk{g}}_j)$.

Now we deal with the compact part of $\mk{g}_j$. According to Corollary \ref{compactpart}, it coincides with $Z_{\mk{g}_j}(\mk{b}_j)$. We claim that $Z_{\mk{k}_0}(\mk{b}_j)$ decomposes as an orthogonal direct sum $\mk{z}_j \oplus Z_{\mk{g}_j}(\mk{b}_j)$. Indeed, it follows immediately from the fact that $Z_{\mk{k}_0}(\mk{b}_j) = Z_{\mk{m}_j}(\mk{b}_j)$ and that $\mk{m}_j = \mk{z}_j \oplus^\perp \mk{g}_j$. The assertion in the proposition about the compact part follows.

Now we have to show that the noncompact part of $\mk{g}_j$ is $\widetilde{\mk{g}}_j$ and that it is orthogonal to the compact part. First, we show that $Z_{\mk{k}_0}(\mk{b}_j) \perp \widetilde{\mk{g}}_j$. Indeed, any $X$ from the centralizer is already orthogonal to $\bigoplus_{\upalpha \in \Upsigma_j} \mk{g}_\upalpha$, and if $Y, Z$ lie in the latter, we have $\cross{X}{[Y,Z]} = -B(X, [Y,Z]) = -B([X,Y],Z) = 0$. Since $\widetilde{\mk{g}}_j$ is spanned as a vector space by $\bigoplus_{\upalpha \in \Upsigma_j} \mk{g}_\upalpha$ and $[\bigoplus_{\upalpha \in \Upsigma_j} \mk{g}_\upalpha, \bigoplus_{\upalpha \in \Upsigma_j} \mk{g}_\upalpha]$, $X$ is orthogonal to $\widetilde{\mk{g}}_j$. Next, assume $X \in \mk{k}_0$ is orthogonal to $\widetilde{\mk{g}}_j$. We want to show that it lies in $Z_{\mk{k}_0}(\mk{b}_j)$, that is, commutes with all $\mk{g}_\upalpha$'s, $\upalpha \in \Upsigma_j$. Given $Y \in \mk{g}_\upalpha$, $Z = [X,Y] \in \mk{g}_\upalpha$, and $||Z||^2 = -B([X,Y], \uptheta Z) = -B(X, [Y, \uptheta Z]) = \cross{X}{[Y, \uptheta Z]} = 0$, since $[Y, \uptheta Z]$ lies in $\widetilde{\mk{g}}_j$. We deduce that $\mk{g}_j = (Z_{\mk{k}_0}(\mk{b}_j) \ominus \mk{z}_j) \oplus^\perp \widetilde{\mk{g}}_j$.

The fact that $\widetilde{\mk{g}}_j$ is actually an ideal of $\mk{g}_j$ is obvious because it has a complementary ideal $Z_{\mk{k}_0}(\mk{b}_j) \ominus \mk{z}_j$ with which it commutes.
\end{proof}

\begin{remark}
The same is true when we work with arbitrary parabolic subalgebras, not necessarily maximal ones. Instead of taking $\Upphi_j = \Uplambda \mysetminus \set{\upalpha_j}$, one takes arbitrary subsets $\Upphi \subseteq \Uplambda$ and defines $\mk{q}_\Upphi$ and other related subalgebras in a similar fashion. The semisimple subalgebra $\mk{g}_\Upphi$ may contain compact ideals, so the equality $[\mk{b}_\Upphi, \mk{b}_\Upphi] = \mk{g}_\Upphi \cap \mk{k}$ does not hold in general. But the above proof applies almost verbatim in this more general situation and yields a handy description of the compact and noncompact parts of $\mk{g}_\Upphi$.
\end{remark}

Define $\boldsymbol{\widetilde{G}_j}$ to be the connected Lie subgroup of $G$ corresponding to $\widetilde{\mk{g}}_j$. Since $\widetilde{G}_j$ is semisimple and $G$ has finite center, it follows from \cite[Section 6]{mostowsubgroups} that $\widetilde{G}_j$ is closed. The intersection $\widetilde{G}_j \cap K$ is then compact, so $(\widetilde{G}_j, \widetilde{G}_j \cap K)$ is a Riemannian symmetric pair of noncompact type whose corresponding symmetric space is $B_j$. It follows that $\widetilde{G}_j \cap K$ is connected and the center of $\widetilde{G}_j$ is finite and contained in $\widetilde{G}_j \cap K$. Therefore, we have a finite covering $\widetilde{G}_j \twoheadrightarrow \widetilde{G}_j/Z(\widetilde{G}_j) \cong I^0(B_j)$, so the action $G_j \curvearrowright B_j$ is 'almost' effective and $\widetilde{\mk{g}}_j \cong \mk{i}(B_j)$. This identification of $\mk{i}(B_j)$ with an ideal in $\mk{g}_j$ is precisely what we did in the canonical extension method. Note also that since we have $\mk{m}_j = Z_{\mk{k}_0}(\mk{b}_j) \oplus \widetilde{\mk{g}}_j$ and the first summand is precisely the kernel of the representation of $\mk{m}_j$ on $\mk{b}_j$, we can take the Lie algebra $\mk{h}_j^\Uplambda$ in the canonical extension to be not $\mk{h}_j \oplus \mk{a}_j \loplus \mk{n}_j$ but $\widehat{\mk{h}}_j \oplus \mk{a}_j \loplus \mk{n}_j$, where $\widehat{\mk{h}}_j \subseteq \mk{m}_j$ is any Lie subalgebra the image of whose projection to $\widetilde{\mk{g}}_j$ along $Z_{\mk{k}_0}(\mk{b}_j)$ is $\mk{h}_j$ and whose corresponding connected Lie subgroup of $M_j$ is closed (e.g., we can take $\widehat{\mk{h}}_j = Z_{\mk{k}_0}(\mk{b}_j) \oplus \mk{h}_j$). The resulting action on $M$ would still have the same orbits. This observation will let us describe canonically extended actions more neatly.

\subsection{Rank-one case.}\label{rank one case} Here we prove a small technical result pertaining to cohomogeneity-one actions on rank-1 noncompact symmetric spaces, i.e. on hyperbolic spaces over finite-dimensional real normed division algebras. It will allow us to describe actions on rank-2 spaces canonically extended from their boundary components in a relatively nice and uniform way. Note that if $M$ is a noncompact symmetric space of rank $1$, we have $\Uplambda = \set{\upalpha_1}$ and $\mk{m}_1 = \mk{k}_1 = \mk{k}_0$, so $M_1$ is compact and every subspace of $\mk{n}_1^1$ is automatically admissible.

\begin{proposition}\label{rank one proposition}
Let $M$ be a symmetric space of noncompact type and rank $1$. 

\begin{enumerate}[\normalfont (1)]
    \item If $\mk{v} \subseteq \mk{n}_1^1 = \mk{g}_{\upalpha_1}$ is a protohomogeneous subspace, then $\mk{h}_\mk{v} = N_\mk{k}(\mk{a} \oplus \mk{n}_{1,\mk{v}}) \oplus \mk{a} \oplus \mk{n}_{1,\mk{v}}$ is a Lie subalgebra of $\mk{g}$ whose corresponding connected Lie subgroup is closed, acts on $M$ with cohomogeneity one, and in fact has the same orbits as $H_{1, \mk{v}}$.
    \item Let $V \subseteq \mk{p}$ be a Lie triple system corresponding to a totally geodesic orbit $S$ of some cohomogeneity-one action on $M$, assume $\mk{a} \subseteq V$, and let $\mk{h}$ stand for the preimage of $V$ under the isomorphism $\mk{a} \oplus \mk{n} \isoto \mk{p}$. Then $N_\mk{k}(\mk{h}) \oplus \mk{h}$ is a Lie subalgebra of $\mk{g}$ whose corresponding connected Lie subgroup is closed, acts on $M$ with cohomogeneity one, and has $S$ as its orbit.
\end{enumerate}
\end{proposition}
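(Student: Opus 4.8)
Here is how I would approach the proof.

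\medskip

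\noindent\textbf{Plan for (1).} The plan is to recognise $\mk{h}_\mk{v}$ as the Lie algebra $\mk{h}_{1,\mk{v}}$ of the nilpotent‑construction group $H_{1,\mk{v}}$ and then quote the properties of that construction recalled in Subsection~\ref{classes_of_actions}. In rank one $\mk{m}_1 = \mk{k}_1 = \mk{k}_0$, $\mk{a}_1 = \mk{a}$, $\mk{n}_1 = \mk{g}_{\upalpha_1}\oplus\mk{g}_{2\upalpha_1}$ and $\mk{n}_1^1 = \mk{g}_{\upalpha_1}$, so $\mk{h}_{1,\mk{v}} = N_{\mk{k}_0}(\mk{v})\oplus\mk{a}\loplus\mk{n}_{1,\mk{v}}$, and it suffices to show $N_\mk{k}(\mk{a}\oplus\mk{n}_{1,\mk{v}}) = N_{\mk{k}_0}(\mk{v})$. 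If $X\iin\mk{k}$ normalises $\mk{a}\oplus\mk{n}_{1,\mk{v}}$ then $[X,\mk{a}]\subseteq(\mk{a}\oplus\mk{n}_{1,\mk{v}})\cap\mk{p} = \mk{a}$ (using $\mk{p}\cap(\mk{a}\oplus\mk{n}) = \mk{a}$), while $[X,\mk{a}]\perp\mk{a}$ by invariance of the Killing form, so $[X,\mk{a}] = 0$ and $X\iin\mk{k}_0$; conversely an element of $\mk{k}_0$ centralises $\mk{a}$ and acts orthogonally on $\mk{g}_{\upalpha_1}$ and $\mk{g}_{2\upalpha_1}$, hence normalises $\mk{a}\oplus(\mk{g}_{\upalpha_1}\ominus\mk{v})\oplus\mk{g}_{2\upalpha_1}$ iff it normalises $\mk{v}$. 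Thus $\mk{h}_\mk{v} = \mk{h}_{1,\mk{v}}$, the connected subgroup it generates is $H_{1,\mk{v}}$ itself, and the remaining claims are exactly the statement of the nilpotent construction: admissibility is automatic in rank one, protohomogeneity of $\mk{v}$ is assumption~(2), so $H_{1,\mk{v}}$ is closed and acts with cohomogeneity one. This part is routine.

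\medskip

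\noindent\textbf{Plan for (2).} First I would pin down the shape of $V$ and $\mk{h}$. Since $V$ is a Lie triple system and $\mk{a}\subseteq V$, the operator $(\ad H)^2$ preserves $V$ for every $H\iin\mk{a}$; taking $H$ with $\upalpha_1(H)\neq 0$ it has the pairwise distinct eigenvalues $0$, $\upalpha_1(H)^2$, $4\upalpha_1(H)^2$ on $\mk{a}$, $\mk{p}_{\upalpha_1}$, $\mk{p}_{2\upalpha_1}$, and its $0$‑eigenspace inside $V$ is squeezed between $\mk{a}$ and $\mk{p}\cap\mk{g}_0 = \mk{a}$. Hence $V = \mk{a}\oplus(V\cap\mk{p}_{\upalpha_1})\oplus(V\cap\mk{p}_{2\upalpha_1})$, so $\mk{h} = \mk{a}\oplus\mk{w}_{\upalpha_1}\oplus\mk{w}_{2\upalpha_1}$ where $\mk{w}_\upalpha\subseteq\mk{g}_\upalpha$ is the subspace with $\set{X-\uptheta X\mid X\iin\mk{w}_\upalpha} = V\cap\mk{p}_\upalpha$. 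Next I would check that $\mk{h}$ is a subalgebra of $\mk{a}\oplus\mk{n}$: the only nontrivial bracket is $[\mk{w}_{\upalpha_1},\mk{w}_{\upalpha_1}]\subseteq\mk{w}_{2\upalpha_1}$, which follows by applying the Lie triple identity to $X,Y\iin V\cap\mk{p}_{\upalpha_1}$ and $H\iin\mk{a}\subseteq V$: writing $X = X_0-\uptheta X_0$, $Y = Y_0-\uptheta Y_0$ with $X_0,Y_0\iin\mk{w}_{\upalpha_1}$, one computes $[[X,Y],H]$ to be a nonzero multiple of $[X_0,Y_0]-\uptheta[X_0,Y_0]\iin\mk{p}_{2\upalpha_1}$, so lying in $V$ forces $[X_0,Y_0]\iin\mk{w}_{2\upalpha_1}$ (this is where $\mk{a}\subseteq V$ is used). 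Then $N_\mk{k}(\mk{h})\oplus\mk{h}$ is a subalgebra of $\mk{g}$: it is bracket‑closed, being a normaliser acting on the subalgebra $\mk{h}$, and the sum is direct since $\mk{k}\cap(\mk{a}\oplus\mk{n}) = 0$; as in (1), $N_\mk{k}(\mk{h})\subseteq\mk{k}_0$. Writing $\widehat{H}\leqslant AN$ for the connected subgroup with Lie algebra $\mk{h} = \mk{a}\oplus(\mk{h}\cap\mk{n})$, which is the closed semidirect product $A\ltimes(\widehat{H}\cap N)$, the group $H^\ast = N^0_K(\mk{h})\cdot\widehat{H}$ is connected with Lie algebra $N_\mk{k}(\mk{h})\oplus\mk{h}$ and is closed, being a compact group times a closed subgroup it normalises.

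\medskip

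\noindent\textbf{Finishing (2): orbit and cohomogeneity.} At $o$ the orbit $H^\ast\ccdot o$ has tangent space $\pr_\mk{p}(\mk{h}) = V = T_oS$. Moreover $\widehat{H}\ccdot o\subseteq H^\ast\ccdot o$ is totally geodesic: in the solvable model $M\cong AN$ it is the subgroup $\widehat{H}$, and since $\mk{h}$ is the $\pr_\mk{p}$‑preimage of a Lie triple system containing $\mk{a}$, its orbit through $o$ is the totally geodesic submanifold attached to $V$ — this is the standard description of totally geodesic submanifolds through $o$ containing the maximal flat in the Iwasawa picture (alternatively, one checks it by computing the second fundamental form of $\widehat{H}\ccdot o$ directly). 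As $\widehat{H}\ccdot o$ and $S$ are totally geodesic with the same tangent space at $o$ they coincide, and since $\widehat{H}\ccdot o\subseteq H^\ast\ccdot o$ have equal dimension, $H^\ast\ccdot o$ is connected and $S$ is properly embedded, we conclude $H^\ast\ccdot o = S$. For cohomogeneity one, I would use that by Subsection~\ref{classes_of_actions} the orbit $S$ is, up to conjugation, the orbit through $o$ of a $\uptheta$‑stable reductive subgroup $L$ with Lie algebra $\mk{l}$ satisfying $\mk{l}\cap\mk{p} = V$ and whose action on $M$ has cohomogeneity one, so the isotropy of $L$ at $o$ acts transitively on the unit sphere of $\nu_oS = V^\perp$; transferring this transitivity to $H^\ast$ one obtains an $H^\ast$‑orbit of codimension at most one, and $H^\ast$, not being transitive since $S\neq M$, acts with cohomogeneity one. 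The hard part — and what I expect to be the real work — is exactly this last transfer, reconciling the $\mk{k}_0$‑isotropy of $H^\ast$ with the isotropy of $L$; everything preceding it is bookkeeping with the restricted root space decomposition.
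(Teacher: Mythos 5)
Part (1) is correct and is essentially the paper's argument; the paper only records the inclusion $\mk{h}_{1,\mk{v}} \subseteq \mk{h}_\mk{v}$ together with equality of their projections to $\mk{p}$, but your sharper identity $N_\mk{k}(\mk{a}\oplus\mk{n}_{1,\mk{v}}) = N_{\mk{k}_0}(\mk{v})$ is true and yields the same conclusion.

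For part (2) you take a genuinely different route to the subalgebra claim. The paper does not decompose $V$ at all: it invokes the Berndt--Br\"uck list of totally geodesic singular orbits, observes that each standard model $S'$ is the orbit of a $\Uptheta$-stable subgroup $G'$ whose Iwasawa decomposition exhibits $\mk{h}' = \mk{a}\oplus\mk{n}'$ as a subalgebra with orbit $S'$, and transports this to $S$ by a congruence chosen in $Z_{\widetilde{K}}(\mk{a})$. Your eigenspace decomposition of $V$ under $(\ad H)^2$ and the computation $[[X,Y],H] = -2\upalpha_1(H)\bigl([X_0,Y_0]-\uptheta[X_0,Y_0]\bigr)$ forcing $[\mk{w}_{\upalpha_1},\mk{w}_{\upalpha_1}]\subseteq\mk{w}_{2\upalpha_1}$ are correct and give a cleaner, classification-free proof that $\mk{h}$ is a subalgebra. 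However, the identification $\widehat{H}\ccdot o = S$ is exactly what the paper flags as the nontrivial point (``a priori the orbit only touches $S$ tangentially at $o$'') and spends most of its proof on; your appeal to a ``standard description'' must be substantiated, e.g.\ by the second fundamental form computation you mention (which does work, using the Lie triple identity once more to get $[\mk{w}_{2\upalpha_1},\uptheta\mk{w}_{\upalpha_1}]\subseteq\mk{w}_{\upalpha_1}$), or by showing $\mk{h} = \mk{a}\oplus(\mk{g}_V\cap\mk{n})$ for the $\uptheta$-stable subalgebra $\mk{g}_V = [V,V]\oplus V$ and using its Iwasawa decomposition.

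The genuine gap is the cohomogeneity-one step, which you explicitly leave open --- and your unease is well founded: the ``transfer'' does not go through in general. You correctly observe $N_\mk{k}(\mk{h})\subseteq\mk{k}_0$; but $\mk{k}_0$ preserves each of $\mk{p}_{\upalpha_1}$ and $\mk{p}_{2\upalpha_1}$, so the slice representation of $N^0_K(\mk{h})$ on $\nu_oS$ preserves the splitting $(\nu_oS\cap\mk{p}_{\upalpha_1})\oplus(\nu_oS\cap\mk{p}_{2\upalpha_1})$ and cannot be transitive on the unit normal sphere when both summands are nonzero. This is precisely what happens for the ``mixed'' orbits in the list (\ref{totgeodrankone}), e.g.\ $S = \Rl H^n\subset\Cx H^n$ with $n\geqslant 2$: the sphere-transitive isotropy $\SO(n)\subset\SO^0(n,1)$ normalizes $V$ but does not centralize $\mk{a}$, hence does not normalize $\mk{h}$, and indeed $N_\mk{k}(\mk{h})$ kills the $\mk{p}_{2\upalpha_1}$-direction of $\nu_oS$. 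The paper closes this step by asserting $N_\mk{k}(\mk{h}) = N_\mk{k}(V)$, but only ``$\subseteq$'' follows from $\ad(X)\circ\pr_\mk{p} = \pr_\mk{p}\circ\ad(X)$ for $X\iin\mk{k}$; an element normalizing $V$ need not preserve $\mk{a}\oplus\mk{n}$ (already $[V,V]$ for $\Rl H^2\subset\Rl H^3$ normalizes $V$ but not $\mk{h}$). So the step you defer is not mere bookkeeping: to obtain a sphere-transitive slice representation one must enlarge the compact part to $N_\mk{k}(V)$ and pair it with $V$ itself (the $\uptheta$-stable subalgebra $N_\mk{k}(V)\oplus V$) rather than with $\mk{h}$; the statement as written is safe only in the cases where $\nu_oS$ lies inside a single $\mk{p}_\upalpha$, where $N_{\mk{k}_0}(\mk{h})$ already contains a sphere-transitive factor.
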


\begin{proof}
For the first part, note that $\mk{h}_{1, \mk{v}} = N_{\mk{k}_0}(\mk{n}_{1,\mk{v}}) \oplus \mk{a} \oplus \mk{n}_{1,\mk{v}}$, so it is contained in $\mk{h}_\mk{v}$ (which is clearly a Lie subalgebra) and has the same projection in $\mk{p}$ as $\mk{h}_\mk{v}$. Since we already know that $H_{1, \mk{v}}$ acts with cohomogeneity one, it follows that its orbits coincide with those of the Lie subgroup corresponding to $\mk{h}_\mk{v}$. The latter is easily seen to be $N^0_K(A N_{1,\mk{v}}) A N_{1,\mk{v}}$, which is closed by the global Iwasawa decomposition.

For the second part, our assumption implies $\mk{a} \subseteq \mk{h}$, so we can write $\mk{h} = \mk{a} \oplus \mk{h}_n$, where $\mk{h}_n = \mk{h} \cap \mk{n}$. Assume for a moment that we know that $\mk{h}$ is a Lie subalgebra of $\mk{a} \oplus \mk{n}$ and write $H$ for its connected Lie subgroup. Clearly, $\mk{h}_n$ is then also a Lie subalgebra. Its corresponding Lie subgroup $H_n$ of $N$ is closed because the exponential map of $N$ is a diffeomorphism. We see that $H = AH_n$ and hence it is also a closed subgroup. Under our assumption, $N_\mk{k}(\mk{h}) \oplus \mk{h}$ is trivially a Lie subalgebra of $\mk{g}$ and its Lie subgroup is $N_K^0(H)H$, which is closed. Finally, $N_\mk{k}(\mk{h}) = N_\mk{k}(V)$, and the fact that $S$ is an orbit of some cohomogeneity-one action implies that the slice representation of $N^0_K(H)H$ at $o$ is of cohomogeneity one, hence so is the action of $N^0_K(H)H$ on $M$. 

It follows that we only need to prove that $\mk{h}$ is a Lie subalgebra of $\mk{a} \oplus \mk{n}$ and its corresponding Lie subgroup $H$ has $S$ as an orbit (a priori, we only know that its orbit through $o$ is of the same dimension and touches $S$ at $o$ tangentially), which we are going to do in a rather ad-hoc fashion. Consider first the special case when our totally geodesic orbit is 
\begin{equation}\label{totgeodrankone}
S' = \begin{cases}
\Rl H^k \; (0 \leqslant k \leqslant n-1), \quad &\text{if} \; M = \Rl H^n, \\
\Cx H^k \; (0 \leqslant k \leqslant n-1) \; \text{or} \; \Rl H^n, \quad &\text{if} \; M = \Cx H^n, \\
\Hq H^k \; (0 \leqslant k \leqslant n-1) \; \text{or} \; \Cx H^n, \quad &\text{if} \; M = \Hq H^n, \\
\Oo H^1 \; \text{or} \; \Hq H^2, \quad &\text{if} \; M = \Oo H^2,
\end{cases}
\end{equation}
where all these totally geodesic submanifolds are embedded into the corresponding hyperbolic space in a standard way. We are going to write $\mk{h}'$ and $\mk{h}'_n$ instead of $\mk{h}$ and $\mk{h}_n$ here to avoid notational confusion later on. Note that $S'$ is an orbit of the Lie subgroup
\begin{equation*}
G' = \begin{cases}
\SO^0(k,1) \; (0 \leqslant k \leqslant n-1), \quad &\text{if} \; M = \Rl H^n, \\
\SU(k,1) \; (0 \leqslant k \leqslant n-1) \; \text{or} \; \SO^0(n,1), \quad &\text{if} \; M = \Cx H^n, \\
\Sp(k,1) \; (0 \leqslant k \leqslant n-1) \; \text{or} \; \SU(n,1), \quad &\text{if} \; M = \Hq H^n, \\
\SO^0(8,1) \; \text{or} \; \Sp(2,1), \quad &\text{if} \; M = \Oo H^2,
\end{cases}
\end{equation*}
also embedded into the corresponding $G$ in a standard way. But $G'$ is $\Uptheta$-stable and $V$ coincides with $\mk{p}' = \mk{g}' \cap \mk{p}$, where $\mk{g}' = \Lie(G')$. Moreover, $\mk{a} \subseteq \mk{p}'$ is a maximal abelian subspace, while $\Upsigma' \cap \Upsigma^+$ is a choice of positive restricted roots for $\mk{g}'$. It all implies that $\mk{h}' = \mk{a} \oplus \mk{h}'_n$ coincides with $\mk{a} \oplus \mk{n}'$, where $\mk{n}'$ is the sum of positive root subspaces of $\mk{g}'$. But the latter sum is a Lie subalgebra of $\mk{a} \oplus \mk{n}$. What is more, since $S'$ is an orbit of $G'$, it is also an orbit of $H' = AH'_n = AN' \subseteq G'$, so $S'$ chosen in such a special way satisfies the assertions of (2).

Totally geodesic submanifolds in (\ref{totgeodrankone}) exhaust the list of totally geodesic orbits of cohomoge\-neity-one actions on hyperbolic spaces up to isometric congruence (see \cite[Theorem 1]{berndt_bruck}). In other words, given an arbitrary $S$ as in part (2) of the proposition, it is congruent to some $S'$ as in (\ref{totgeodrankone}), for which the assertions of (2) are satisfied. Now, $S'$ is a symmetric space of rank 1, so $I^0(S')$ acts homogeneously and isotropically on $S'$. Moreover, every element of $I^0(S')$ can be extended to an isometry of $M$. This is because $S'$ is a semisimple and totally geodesic complete submanifold of $M$ (see the argument in \cite[Proposition 3.2]{berndttamarusingtotgeod} or \cite[Theorem V.4.1(i)]{helgason}). It follows that we can take our congruence $k \in I(M)$ between $S$ and $S'$ to preserve $o$ and $\mk{a}$ and even fix $\mk{a}$ pointwise, i.e. to lie in $Z_{\widetilde{K}}(\mk{a})$, where $\widetilde{K}$ is the isotropy subgroup of $I(M)$ at $o$. But such $k$ preserves both $\mk{a} \oplus \mk{n}$ and $\mk{p}$ and is a Lie algebra automorphism of the former. Since it maps $V$ onto $V'$, it must map $\mk{h}$ onto $\mk{h}'$, so $\mk{h}$ is a Lie subalgebra. Moreover, $S'$ is an orbit of $H'$, which means that $S$ must be an orbit of $H$, which completes the proof.
\end{proof}

\section{Classification of cohomogeneity-one actions on \texorpdfstring{$\mathrm{SL}(3, \mathbb{H})/\mathrm{Sp}(3)$}{this space}}\label{first space}

In this section we classify, up to orbit equivalence, cohomogeneity-one actions on the noncompact symmetric space $\mathrm{SL}(3, \mathbb{H})/\mathrm{Sp}(3)$.

The symmetric space $M = \mathrm{SL}(3, \mathbb{H})/\mathrm{Sp}(3)$ is irreducible of rank 2 and dimension 14, and its root system is $A_2$. It is the quaternionic analog of the $A_2$-type spaces $\mathrm{SL}(3, \mathbb{R})/\SO(3)$ and $\mathrm{SL}(3, \Cx)/\SU(3)$, the explicit classification of cohomogeneity-one actions on which has already been obtained in \cite{berndttamarucohomogeneityone} and \cite{berdntdominguez-vazquez}, respectively. Let $\Uplambda = \set{\upalpha_1, \upalpha_2}$. Then $\Upsigma^+ = \set{\upalpha_1, \upalpha_2, \upalpha_1 + \upalpha_2}$ and all the roots have multiplicity 4. The Lie subalgebra $\mk{k}_0$ is isomorphic to $\mk{sp}(1)^{\oplus 3}$.

\begin{theorem}\label{SL(3,H)/Sp(3)}
Let $H \subset G = \SL(3, \mathbb{H})$ be a closed connected subgroup acting on $M$ with cohomogeneity one. Then its action is orbit equivalent to exactly one of the following:

\begin{enumerate}[\normalfont (1)]
    \item The action of the connected Lie subgroup $H_\ell$ of $G$ with Lie algebra
    $$
    \mk{h}_\ell = (\mk{a} \ominus \ell) \oplus \mk{n},
    $$
    where $\ell$ is a one-dimensional linear subspace of $\mk{a}$. Its orbits are all isometrically congruent to each other and form a Riemannian foliation on $M$.
    \item The action of the connected Lie subgroup $H_1$ of $G$ with Lie algebra
    $$
    \mk{h}_1 = \mk{a} \oplus (\mk{n} \ominus \ell_1),
    $$
    where $\ell_1$ is any one-dimensional linear subspace of $\mk{g}_{\upalpha_1}$. Its orbits form a Riemannian foliation on $M$ and there is exactly one minimal orbit.
    \item The action of the subgroup $L_1 = L_1^0$ of $G$. It has a 6-dimensional totally geodesic singular orbit $F_1 \simeq \mathbb{R}H^5 \times \mathbb{E}$.
    \item The action of the subgroup $\SL(3,\Cx)$ of $G$ embedded in a standard way. It has an 8-dimensional totally geodesic singular orbit isometric to $\mathrm{SL}(3, \Cx)/\SU(3)$.
    \item The action of the connected Lie subgroup $H^\Uplambda_{1,k}, k \in \set{0,1,2,3}$, of $G$ with Lie algebra
    $$
    \mk{h}_{1,k}^\Uplambda = N_{\mk{k}_1}(\mk{w}) \oplus \mk{w} \oplus \mk{a}_1 \oplus \mk{n}_1,
    $$
    where $\mk{w}$ is a $k$-dimensional subspace of $\mk{a}^1 \oplus \mk{g}_{\upalpha_2}$ containing $\mk{a}^1$ (unless $k = 0$). Here $N_{\mk{k}_1}(\mk{w}) \simeq \mk{sp}(1) \oplus \mk{so}(5-k) \oplus \mk{so}(k)$, where the first summand is $Z_{\mk{k}_0}(\mk{b}_1)$ and the rest is the normalizer of $\mk{w}$ in the Lie algebra $\widetilde{\mk{g}}_1 \cap \mk{k} \simeq \mk{so}(5)$ of the isotropy subgroup of the isometry group of the boundary component $B_1 \simeq \Rl H^5$. This action has a minimal\hspace{1.4pt}\footnote{Singular orbits of cohomogeneity-one actions are always minimal, see \cite[Proposition 3]{berndt_bruck}.} singular orbit of codimension $5-k$ and can be obtained by canonical extension of the cohomogeneity-one action on $B_1$ with $\Rl H^k$ as a totally geodesic singular orbit.
\end{enumerate}
\end{theorem}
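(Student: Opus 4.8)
The plan is to feed the structure theorem (Theorem~\ref{maintheorem}) its five alternatives for $M = \SL(3,\Hq)/\Sp(3)$ and reduce each of them to the list (1)--(5), the decisive point being that the nilpotent construction contributes nothing new. Since $\DD_M$ has type $A_2$ with both simple roots of multiplicity $4$, the group $\Aut^\mathrm{w}(\DD_M) \cong \mathbb{Z}/2$ acts by interchanging $\upalpha_1$ and $\upalpha_2$; by the first remark following Theorem~\ref{maintheorem} it therefore suffices to treat the parabolic $\mk{q}_1$ (i.e.\ $j=1$) for the canonical extension and the nilpotent construction, and this symmetry identifies the foliations $H_1$ and $H_2$. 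The two foliation alternatives (1)(i)--(ii) of Theorem~\ref{maintheorem} thus yield precisely items (1) and (2) of the theorem; their mutual distinctness, and distinctness from items (3)--(5), follows from \cite{berndttamarufoliations, solonenko2021homogeneous} (item (1) has all orbits congruent, item (2) a unique minimal orbit, and items (3)--(5) each have a singular orbit, which by codimension forces them apart from each other, as explained below).

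\textbf{Totally geodesic singular orbits.} For alternative (2)(i) I would invoke Berndt and Tamaru's classification of reflective submanifolds with rank-one normal space (Theorems~3.3 and~4.2 of \cite{berndttamarusingtotgeod}) applied to $M$: these turn out to be $F_1 \simeq \Rl H^5 \times \mathbb{E}$, with $F_1^\perp$ the totally geodesic $\Hq H^2$ tangent to $\mk{p}_{\upalpha_1} \oplus \mk{p}_{\upalpha_1+\upalpha_2}$, and the fixed-point set $\SL(3,\Cx)/\SU(3)$ of complex conjugation, whose $6$-dimensional normal space is a rank-one hyperbolic space; no non-reflective exception (all five of which are tied to $G_2$) occurs in type $A_2$. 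One then checks that $L_1 = L_1^0$ and the standardly embedded $\SL(3,\Cx)$ realise the associated cohomogeneity-one actions -- their slice representations at $o$ are the transitive actions of $\Sp(2) \subseteq K_1$ on $S^7$ and of $\SU(3)$ on $S^5$, respectively -- which gives items (3) and (4).

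\textbf{Canonical extensions.} For alternative (2)(ii)(a), note that $B_1 \simeq \Rl H^5$ (removing a node from $A_2$ leaves a single $4$-weighted node, and $\widetilde{\mk{g}}_1 \simeq \mk{sl}(2,\Hq) \simeq \mk{so}(5,1)$). By the rank-one classification (Berndt--Br\"uck, or Proposition~\ref{rank one proposition}), the cohomogeneity-one actions on $\Rl H^5$ with a singular orbit are, up to strong orbit equivalence, exactly those with totally geodesic singular orbit $\Rl H^k$, $k \in \{0,1,2,3\}$. Canonically extending each -- using the streamlined recipe from Subsection~\ref{error correction} (adjoin $Z_{\mk{k}_0}(\mk{b}_1) \simeq \mk{sp}(1)$) together with Proposition~\ref{rank one proposition}(2), which identifies the extended part $\mk{h}_1$ with the preimage in $\mk{a}^1 \oplus \mk{g}_{\upalpha_2} \cong \mk{b}_1$ of the tangent triple system of $\Rl H^k$, hence (after a congruence inside $B_1$) with a $k$-plane $\mk{w} \subseteq \mk{a}^1 \oplus \mk{g}_{\upalpha_2}$ containing $\mk{a}^1$ when $k \geq 1$ -- produces $\mk{h}_{1,k}^\Uplambda = N_{\mk{k}_1}(\mk{w}) \oplus \mk{w} \oplus \mk{a}_1 \oplus \mk{n}_1$ as in item (5), and a normalizer computation in $\mk{k}_1 \simeq \mk{sp}(1) \oplus \mk{so}(5)$ gives $N_{\mk{k}_1}(\mk{w}) \simeq \mk{sp}(1) \oplus \mk{so}(5-k) \oplus \mk{so}(k)$. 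Distinct values of $k$ give singular orbits of distinct codimensions $5-k \in \{2,3,4,5\}$, so these are pairwise inequivalent and, by codimension, inequivalent to items (3), (4) and (having a singular orbit) to (1), (2).

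\textbf{Nilpotent construction -- the main obstacle.} Here $\mk{n}_1 = \mk{n}_1^1 = \mk{g}_{\upalpha_1} \oplus \mk{g}_{\upalpha_1+\upalpha_2}$ is abelian and $8$-dimensional, $\mk{m}_1 \simeq \mk{sp}(1) \oplus \mk{sl}(2,\Hq)$, $\mk{k}_1 \simeq \mk{sp}(1) \oplus \mk{sp}(2)$, $\mk{b}_1 \simeq \Rl^5$, and one identifies $\mk{n}_1^1 \cong \Hq^{1\times 2}$ so that the $\mk{sp}(1)$ summand of $\mk{m}_1$ acts by left scalar multiplication, the $\mk{sl}(2,\Hq)$ summand by right matrix multiplication, and the maximal compact subgroup acts as the standard $\Sp(2)\Sp(1) \curvearrowright \Hq^2$. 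The protohomogeneous subspaces of $\Hq^2$ under $\Sp(2)\Sp(1)$ are classified in \cite{protohomogeneous}; combined with the Montgomery--Samelson--Borel list of transitive actions on spheres and the bound $\dim K_1 = 13$, this forces every protohomogeneous $\mk{v} \subseteq \mk{n}_1^1$ with $\dim \mk{v} \geq 2$ to have $\dim \mk{v} \in \{2,3,4,8\}$ (dimensions $5$, $6$, $7$ are impossible because the only transitive groups on $S^4, S^5, S^6$ have no room inside $\Sp(2)\Sp(1)$ once one uses that $\Sp(2) = \Spin(5)$ acts irreducibly on $\Hq^2 \cong \Rl^8$). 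For each such $\mk{v}$ one tests assumption~(1) by computing $N_{\mk{m}_1}(\mk{v})$ and projecting it to $\mk{b}_1$ along $\mk{k}_1$ -- quaternionic linear algebra. For $\mk{v} = \mk{n}_1^1$ this yields $H_{1,\mk{v}} = L_1$, i.e.\ item (3); for an admissible $\mk{v}$ of dimension $d \in \{2,3,4\}$ I would, after moving $\mk{v}$ by an element of $K_1$ into a standard subspace of $\mk{g}_{\upalpha_1}$, exhibit an explicit orbit equivalence of $H_{1,\mk{v}}$ with $H_{i,5-d}^\Uplambda$ for a suitable $i \in \{1,2\}$ -- both singular orbits have dimension $14-d$, lie inside a parabolic, and, after the isometry interchanging $\upalpha_1$ and $\upalpha_2$, agree in tangent space and second fundamental form at $o$ -- whence $H_{1,\mk{v}}$ falls under item (5) by the diagram symmetry. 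Thus the nilpotent construction produces no new action. I expect this final alternative -- correctly pinning down the $K_1$-module $\mk{n}_1^1$ and the full (possibly disconnected) group $K_1$, importing the nontrivial classification of \cite{protohomogeneous}, and carrying the admissibility computations and the orbit-equivalence identifications through without error -- to be the principal obstacle.
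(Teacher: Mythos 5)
Your architecture matches the paper's: reduce to Theorem \ref{maintheorem}, use the $A_2$ diagram symmetry to work with a single maximal parabolic, get (3)--(4) from \cite{berndttamarusingtotgeod}, get (5) from the four cohomogeneity-one actions on $B_1 \simeq \Rl H^5$ via Proposition \ref{rank one proposition}, and identify the nilpotent-construction module with $\Hq^2$ under $\Sp(2)\Sp(1)$ so that \cite{protohomogeneous} classifies the protohomogeneous candidates by quaternionic K\"ahler angle. (The paper works with $j=2$ rather than $j=1$, but that is immaterial.) However, the decisive step is missing. You write that for each protohomogeneous $\mk{v}$ ``one tests assumption (1) by computing $N_{\mk{m}_1}(\mk{v})$ and projecting it to $\mk{b}_1$ --- quaternionic linear algebra,'' and then you \emph{assume} the outcome: that every admissible $\mk{v}$ can be moved by $K_1$ into the single root space $\mk{g}_{\upalpha_1}$. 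That is precisely the content that has to be proved. The candidates are parametrized by continuous families of quaternionic K\"ahler angles --- $(\upvarphi,\frac{\uppi}{2},\frac{\uppi}{2})$ in dimension $2$, $(\frac{\uppi}{3},\frac{\uppi}{3},\frac{\uppi}{2})$ in dimension $3$, $(0,\upvarphi,\upvarphi)$ and $(0,\frac{\uppi}{2},\frac{\uppi}{2})$ in dimension $4$ --- and these do \emph{not} sit inside a quaternionic line. If any one of them were admissible it would yield a genuinely new action (its singular orbit's normal space would carry a nonstandard angle, so it could not be congruent to a canonical extension). The paper rules each of these out by an explicit computation exhibiting a linear relation among the $\mk{p}$-components of $N_{\mk{m}_2}(\mk{v})$ (e.g.\ $\Re(p_{21}) = 2\Re(p_{11})\cot\upvarphi$), and verifies admissibility for the three subspaces of angles $(0,\frac{\uppi}{2},\frac{\uppi}{2})$, $(0,0,\frac{\uppi}{2})$, $(0,0,0)$. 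Without these computations the claim that the nilpotent construction produces nothing new is unsupported.

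A secondary issue: your proposed identification of an admissible $H_{1,\mk{v}}$ with a canonical extension --- ``both singular orbits \dots agree in tangent space and second fundamental form at $o$'' --- is not a valid congruence criterion for non-totally-geodesic homogeneous submanifolds. The paper's argument is sharper and you should use it: both $\mk{h}_{j,\mk{v}}$ and $\mk{h}_{i,k}^\Uplambda$ decompose compatibly with the Langlands decomposition of the \emph{other} parabolic $\mk{q}_i$, so in the horospherical decomposition $M \simeq B_i \times A_i \times N_i$ the two singular orbits are products $S \times A_i \times N_i$ where $S \subseteq B_i$ is in each case the \emph{totally geodesic} singular orbit of the induced action on $B_i$; totally geodesic submanifolds through $o$ are determined by their tangent spaces, so the two singular orbits literally coincide, and hence so do all orbits. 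As written, your proposal is a correct plan with the right references, but the case-by-case admissibility analysis --- which you yourself flag as the principal obstacle --- is the proof, and it is not there.
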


\begin{proof}
We consider different cases of Theorem \ref{maintheorem}. If the orbits of $H$ form a foliation, we get the actions in (1) and (2). The actions in (3) and (4) are the only ones that have a totally geodesic singular orbit according to \cite{berndttamarusingtotgeod}. Here we need to check that the slice representations of $L_1$ and $\SL(3, \Cx)$ are transitive on spheres to make sure that these two groups do indeed act on $M$ with cohomogeneity one. It is not hard to show that $K_1 \simeq \Sp(2) \times \Sp(1)$ (in particular, $L_1$ is connected) and that the slice representation of $L_1$ at $o$ is equivalent to the standard representation of $\Sp(2) \times \Sp(1)$ on $\Hq^2$, which is transitive on the unit sphere. Modulo $\mb{Z}/2\mb{Z}$, this is the isotropy representation of the quaternionic hyperbolic plane, which should not be surprising because $F_1^\perp \simeq \Hq H^2$. What for $\SL(3, \Cx)$, its stabilizer at $o$ is $\SU(3)$ and its slice representation at $o$ can be easily computed to be of cohomogeneity one. In fact, it is equivalent to the tautological representation $\Cx^3$ of $\SU(3)$, which is just evidence of the fact that $(\SL(3,\Cx) \ccdot o)^\perp \simeq \Cx H^3$.

Now we determine the actions induced by canonical extension. Since the root system of $M$ is $A_2$, the boundary components $B_1$ and $B_2$ are isometrically congruent (see \cite[Proposition 28]{solonenko2022automorphisms}), so it suffices to consider only actions arising from $B_1$. The boundary component is isometric to $\SL(2, \mathbb{H})/\mathrm{Sp}(2) \simeq \SO^0(5,1)/\SO(5) \cong \Rl H^5$. There are, up to strong orbit equivalence, exactly 4 cohomogeneity-one actions with a singular orbit on $\Rl H^5$. According to Proposition \ref{rank one proposition} and the observation at the end of Subsection \ref{error correction}, the actions in (5) are the canonical extensions of these 4 actions on $B_1$. None of them are congruent to the actions in (3) and (4) because their singular orbits have different dimensions.

We are left to investigate actions arising from the nilpotent construction. Again, we need only consider one of the indices $\set{1,2}$, but this time we choose $j = 2$ (this choice is strategical as will become evident later on). We have $\mk{n}_2 = \mk{n}_2^1 = \mk{g}_{\upalpha_2} \oplus \mk{g}_{\upalpha_1 + \upalpha_2} \simeq \mathbb{H}^2$ and $K_2 \simeq \mathrm{Sp}(2) \times \mathrm{Sp}(1)$. The representation of $K_2$ on $\mk{n}_2$ is equivalent to the standard representation of $\Sp(2) \times \Sp(1)$ on $\mathbb{H}^2$:
$$
(A, \hspace{0.7pt} q) \ccdot \begin{bmatrix} x \\ y \end{bmatrix} = A \begin{bmatrix} xq^{-1} \\ yq^{-1} \end{bmatrix},
$$
where $A$ is a quaternionic-unitary $2 \times 2$ matrix.

We first filter out subspaces of $\mk{n}_2$ that are not protohomogeneous. This problem was recently solved in greater generality for the standard action of $\Sp(n)\Sp(1)$ on $\mb{H}^n$ by D\'{i}az-Ramos, Dom\'{i}nguez-V\'{a}zquez, and Rodr\'{i}guez-V\'{a}zquez in \cite{protohomogeneous}. The authors explicitly classified protohomogeneous subspaces of $\mb{H}^n$ up to the action of $\Sp(n)\Sp(1)$ in terms of their quaternionic K\"{a}hler angle -- the quaternionic analog of the K\"{a}hler angle of a real subspace of a Hilbert space first introduced by Berndt and Br\"{u}ck in \cite{berndt_bruck}. We recall its definition via the following lemma, which was essentially proven in \cite{berndt_bruck}:

\begin{lemma}
Let $\mk{v} \subseteq \mb{H}^n$ be a real subspace, and let $v \in \mk{v}$ be a nonzero vector. There exists a canonical basis $(J_1, J_2, J_3)$ of the quaternionic structure $\mathcal{J}$ on $\mb{H}^n$ and a uniquely defined triple $(\upvarphi_1, \upvarphi_2, \upvarphi_3) \in [0, \frac{\uppi}{2}]^3$ such that:

\begin{enumerate}[\normalfont (i)]
    \item $\upvarphi_i$ is the K\"{a}hler angle of $v$ with respect to $J_i$ (that is, the angle between $J_i v$ and $\mk{v}$) for each $i \in \set{1,2,3}$.
    \item $\cross{\pr_\mk{v} \circ J_i (v)}{\pr_\mk{v} \circ J_j (v)} = 0$ for each $i \ne j$, where $\pr_\mk{v}$ is the orthogonal projector onto $\mk{v}$ in $\mb{H}^n$.
    \item $\upvarphi_1 \leqslant \upvarphi_2 \leqslant \upvarphi_3$.
    \item $\upvarphi_1$ is minimal and $\upvarphi_3$ is maximal among the K\"{a}hler angles of $v$ with respect to all nonzero elements of $\mathcal{J}$.
\end{enumerate}   

In fact, $(J_1, J_2, J_3)$ is a basis for $\mathcal{J}$ that diagonalizes the symmetric bilinear form
$$
\mathcal{J} \times \mathcal{J} \to \Rl, \quad (J, J') \mapsto \cross{\pr_\mk{v} \circ J (v)}{\pr_\mk{v} \circ J' (v)}
$$
with $\cos^2(\upvarphi_i)||v||^2, \; i \in \set{1,2,3},$ on the diagonal. Such a triple $(\upvarphi_1, \upvarphi_2, \upvarphi_3)$ is called the quaternionic K\"{a}hler angle of $\mk{v}$ with respect to $v$.
\end{lemma}

The quaternionic K\"{a}hler angle of $\mk{v}$ in general depends on $v$. However, we have the following fact (see \cite[Lemma 2.4]{protohomogeneous} for a proof):

\begin{lemma}
If $\mk{v}$ is a protohomogeneous subspace of $\mb{H}^n$, then it has constant quaternionic K\"{a}hler angle, i.e. it does not depend on the choice of $v \in \mk{v} \mysetminus \set{0}$. In that case, we call $(\upvarphi_1, \upvarphi_2, \upvarphi_3)$ simply the quaternionic K\"{a}hler angle of $\mk{v}$.
\end{lemma}

The classification of protohomogeneous subspaces of $\mb{H}^n$ obtained in \cite[Theorem A]{protohomogeneous} shows that a $k$-dimensional protohomogeneous subspace $\mk{v} \subseteq \mb{H}^n$ is 'almost always' determined by its quaternionic K\"{a}hler angle up to $\Sp(n)\Sp(1)$: the only exception is when $k \leqslant n$ is congruent to $0$ or $3 \; (\mod 4)$, in which case there may be (at most) two noncongruent protohomogeneous subspaces of dimension $k$ with the same quaternionic K\"{a}hler angle. Since in our case $n=2$, this uncertainty does not concern us. From the table in the aforementioned classification theorem we see that there are the following possibilities for $\mk{v} \subseteq \mk{n}_2$:

\begin{itemize}
    \item \textit{Case 1:} $\dim \mk{v} = 2$ and the quaternionic K\"{a}hler angle of $\mk{v}$ is $(\upvarphi, \frac{\uppi}{2}, \frac{\uppi}{2})$ for some $\upvarphi \in [0, \frac{\uppi}{2}]$.
    \item \textit{Case 2:} $\dim \mk{v} = 3$ and the quaternionic K\"{a}hler angle of $\mk{v}$ is $(\upvarphi, \upvarphi, \frac{\uppi}{2})$ for some $\upvarphi \in \set{0, \frac{\uppi}{3}}$.
    \item \textit{Case 3:} $\dim \mk{v} = 4$ and the quaternionic K\"{a}hler angle of $\mk{v}$ is $(0, \upvarphi, \upvarphi)$ for some $\upvarphi \in [0, \frac{\uppi}{2}]$.
\end{itemize}

We start our investigation with case 1. First, assume that $\upvarphi = 0$. In that case, $\mk{v}$ is a totally complex subspace of $\mk{n}_2$ and we can take it to be $\Cx j E_{23}$ so that $\mk{n}_{2, \mk{v}} = \mk{g}_{\upalpha_1 + \upalpha_2} \oplus \Cx E_{23}$. Now we need to check whether $\mk{v}$ is admissible. To this end, we have to compute $N_{\mk{m}_2}(\mk{v})$ and see if its projection in $\mk{p}$ is the whole $\mk{b}_2$ or a proper subspace of it. Observe that
$$
\mk{m}_2 = \left\{ \left. \begin{pmatrix}
    p_{11} & p_{12} & 0 \\
    p_{21} & p_{22} & 0 \\
    0 & 0 & q \end{pmatrix} \right| \Re(p_{11}) + \Re(p_{22}) = 0, \Re(q) = 0 \right\} \simeq \mk{sl}(2, \Hq) \oplus \mk{sp}(1).
$$
By commuting such matrices with $jE_{23}$ and $kE_{23}$, one readily sees that
$$
N_{\mk{m}_2}(\mk{v}) = \left\{ \left. \begin{pmatrix}
    p_{11} & 0 & 0 \\
    p_{21} & p_{22} & 0 \\
    0 & 0 & q \end{pmatrix} \right| \Re(p_{11}) + \Re(p_{22}) = 0, p_{22} \in \Cx, q \in \Rl i \right\}.
$$
The projection of this in $\mk{p}$ is clearly equal to $\mk{b}_2$, so $\mk{v} = \Cx j E_{23} \subseteq \mk{n}_2$ produces a cohomogeneity-one action. The corresponding Lie algebra is
\begin{equation}\label{decomp1}
\mk{h}_{2, \mk{v}} = (\Rl i E_{11} \oplus (\Im \mb{H}) E_{22} \oplus \Rl i E_{33}) \oplus \mk{a} \oplus \mk{g}_{\upalpha_1} \oplus \Cx E_{23} \oplus \mk{g}_{\upalpha_1 + \upalpha_2},
\end{equation}
where the first summand in the parentheses is $\mk{h}_{2, \mk{v}} \cap \mk{k} = \mk{h}_{2, \mk{v}} \cap \mk{k}_0$. We claim that this action is orbit equivalent to the canonical extension of a cohomogeneity-one action on $B_1$ with a singular orbit of codimension 2. Indeed, in the notation of (5), we take $k=3$ and $\mk{w} = \mk{a}^1 \oplus \Cx E_{23}$. Then we have:
\begin{equation}\label{decomp2}
\mk{h}_{1, 3}^\Uplambda = N_{\mk{k}_1}(\mk{w}) \oplus \mk{a} \oplus \mk{g}_{\upalpha_1} \oplus \Cx E_{23} \oplus \mk{g}_{\upalpha_1 + \upalpha_2}.
\end{equation}
Observe that $\mk{h}_{2, \mk{v}}$ and $\mk{h}_{1, 3}^\Uplambda$ are Lie subalgebras of the parabolic subalgebra $\mk{q}_1$, and they both sit nicely within the Langlands decomposition $\mk{q}_1 = \mk{m}_1 \oplus \mk{a}_1 \oplus \mk{n}_1$: $\mk{h}_{2, \mk{v}} = (\mk{h}_{2, \mk{v}} \cap \mk{m}_1) \oplus (\mk{h}_{2, \mk{v}} \cap \mk{a}_1) \oplus (\mk{h}_{2, \mk{v}} \cap \mk{n}_1)$, and the same for $\mk{h}_{1, 3}^\Uplambda$. It follows that, in terms of the horospherical decomposition $M = B_1 \times A_1 \times N_1$, the singular orbits of the actions of $H_{2, \mk{v}}$ and $H_{1,3}^\Uplambda$ are $((H_{2, \mk{v}} \cap M_1) \ccdot o) \times A_1 \times N_1$ and $((H_{1,3}^\Uplambda \cap M_1) \ccdot o) \times A_1 \times N_1$, respectively. The first factors here are the singular orbits of the cohomogeneity-one actions of $H_{2, \mk{v}} \cap M_1$ and $H_{1,3}^\Uplambda \cap M_1$ on $B_1$, respectively. Looking at the decompositions (\ref{decomp1}) and (\ref{decomp2}), it is clear that these singular orbits coincide, since they both correspond to the Lie triple system $\mk{a}^1 \oplus \set{\uplambda E_{23} + \bar{\uplambda}E_{32} \mid \uplambda \in \Cx}$. Since the singular orbits of $H_{2, \mk{v}}$ and $H_{1,3}^\Uplambda$ coincide, these groups have the same orbits, for all the other orbits are just equidistant tubes around the singular one.

Next, assume that $\upvarphi = \frac{\uppi}{2}$. Then $\mk{v}$ is a totally real subspace of $\mk{n}_2$, so, acting by $K_2$ if necessary, we may assume $\mk{v} = \Rl E_{13} \oplus \Rl E_{23}$. Simple computations reveal that a matrix $X \in \mk{m}_2$ normalizing $\mk{v}$ must have $p_{12}, p_{21} \in \Rl$, which means that the image of the projection of $N_{\mk{m}_2}(\mk{v})$ to $\mk{p}$ will be a proper subspace of $\mk{b}_2$, so such $\mk{v}$ is not an admissible subspace.

We are left to consider the case $\upvarphi \in (0, \frac{\uppi}{2})$. Here $\mk{v}$ is a subspace of constant K\"{a}hler angle $\upvarphi$ inside a totally complex subspace of $\mk{n}_2$ of real dimension 4. Without loss of generality, we choose the latter to be $\Cx E_{13} \oplus \Cx E_{23}$. Then we can take $\mk{v}$ to be the real span of $E_{23}$ and $i \cos\upvarphi E_{23} + i \sin \upvarphi E_{13}$ (see \cite[Proposition 7]{berndt_bruck}). By commuting elements of $\mk{m}_2$ with these two vectors and solving simple systems of linear equations, one gets, among other things, that $\Re (p_{12}) = 0$ and $\Re (p_{21}) = 2 \Re (p_{11}) \cot \upvarphi$. It implies that the image of the projection of $N_{\mk{m}_2}(\mk{v})$ to $\mk{p}$ is contained in $\mk{b}_2 \cap \set{\Re(p_{11}) \cot\upvarphi - \Re(p_{12}) = 0}$, which is a linear hyperplane in $\mk{b}_2$. Therefore, this $\mk{v}$ is not admissible either.

In the second case, consider $\mk{v}$ with a quaternionic K\"{a}hler angle $(0, 0, \frac{\uppi}{2})$ first. Such $\mk{v}$ can be described as $(\Im \mb{H}) \hspace{0.5pt} v$ for some nonzero $v \in \mk{n}_2$. We may assume without loss of generality that $v = E_{23}$. One then easily computes:
$$
N_{\mk{m}_2}(\mk{v}) = \left\{ \left. \begin{pmatrix}
    p_{11} & 0 & 0 \\
    p_{21} & p_{22} & 0 \\
    0 & 0 & q \end{pmatrix} \right| \Re(p_{11}) + \Re(p_{22}) = 0, \hspace{1pt} q = \Im(p_{22}) \right\}.
$$
The image of the projection of that to $\mk{p}$ is the whole of $\mk{b}_2$, so this $\mk{v}$ is admissible. We have:
$$
\mk{h}_{2, \mk{v}} = ((\Im \mb{H}) E_{11} \oplus (\Im \mb{H})(E_{22} + E_{33})) \oplus \mk{a} \oplus \mk{g}_{\upalpha_1} \oplus \Rl E_{23} \oplus \mk{g}_{\upalpha_1 + \upalpha_2},
$$
where the first summand in the parentheses is $\mk{h}_{2, \mk{v}} \cap \mk{k} = \mk{h}_{2, \mk{v}} \cap \mk{k}_0 \simeq \mk{sp}(1) \oplus \mk{sp}(1)$. In a similar vein to what we did in case 1 with $\upvarphi = 0$, one can show that the orbits of $H_{2, \mk{v}}$ coincide with the orbits of $H_{1,2}^\Uplambda$ if we take $\mk{w} = \mk{a}^1 \oplus \Rl E_{23}$. Therefore, this $\mk{v}$ does not produce a new action.

Now let $\mk{v}$ be of quaternionic K\"{a}hler angle $(\frac{\uppi}{3}, \frac{\uppi}{3}, \frac{\uppi}{2})$. It follows from Proposition 5.3 and Remark 5.4 in \cite{protohomogeneous} that $\mk{v} = \vspan\set{E_{23}, iE_{23} + i\sqrt{3}E_{13}, jE_{23} - j\sqrt{3}E_{13}}$ does the trick. Simple calculations show that an element of $\mk{m}_2$ normalizing $\mk{v}$ must have $\Re (p_{11}) = \Re (p_{22}) = 0$, so the image of the projection of $N_{\mk{m}_2}(\mk{v})$ to $\mk{p}$ will be a proper subspace of $\mk{b}_2$, hence this $\mk{v}$ is not admissible.

Finally, assume $\mk{v}$ is 4-dimensional. First, let its quaternionic K\"{a}hler angle be $(0,0,0)$, that is, let it be a quaternionic line in $\mk{m}_2$. Without loss of generality, we choose $\mk{v} = \mb{H}E_{23} = \mk{g}_{\upalpha_2}$. One immediately sees that $N_{\mk{m}_2}(\mk{v}) = \mk{m}_2 \ominus \mk{g}_{\upalpha_1}$, whose projection in $\mk{p}$ is the whole $\mk{b}_2$, so this $\mk{v}$ is admissible. We have:
$$
\mk{h}_{2, \mk{v}} = \mk{g}_0 \oplus \mk{g}_{\upalpha_1} \oplus \mk{g}_{\upalpha_1 + \upalpha_2}.
$$
But this Lie subalgebra coincides with $\mk{h}_{1,1}^\Uplambda$ if we take $\mk{w} = \mk{a}^1$. Consequently, $\mk{v}$ does not give a new action.

Now let the quaternionic K\"{a}hler angle of $\mk{v}$ be $(0, \frac{\uppi}{2}, \frac{\uppi}{2})$. Then $\mk{v}$ is a totally complex subspace, hence we may assume $\mk{v} = \Cx E_{13} \oplus \Cx E_{23}$. Any element of $\mk{m}_2$ normalizing such a subspace must have $p_{12}, p_{21} \in \Cx$, so the image of the projection of $N_{\mk{m}_2}(\mk{v})$ to $\mk{p}$ is smaller than $\mk{b}_2$ and $\mk{v}$ is not admissible.

Finally, let $\mk{v}$ be of quaternionic K\"{a}hler angle $(0, \upvarphi, \upvarphi), \upvarphi \in (0, \frac{\uppi}{2})$. According to \cite{berndt_bruck} (see the discussion before Theorem 5 there), we can take $\mk{v}$ to be spanned by $E_{23}, iE_{23}, j\cos\upvarphi E_{23} + j\sin\upvarphi E_{13},$ and $k\cos\upvarphi E_{23} + k\sin\upvarphi E_{13}$. In a similar fashion to what we had in case 1, by solving the system of linear equations defining $N_{\mk{m}_2}(\mk{v})$, one gets -- among other things -- the same two linear dependencies $\Re (p_{12}) = 0$ and $\Re (p_{21}) = 2 \Re (p_{11}) \cot\upvarphi$. Therefore, the projection of $N_{\mk{m}_2}(\mk{v})$ in $\mk{p}$ is yet again contained in $\mk{b}_2 \cap \set{\Re (p_{11}) \cot\upvarphi - \Re (p_{12}) = 0}$ and $\mk{v}$ fails to be admissible. This finishes the proof of Theorem \ref{SL(3,H)/Sp(3)}.
\end{proof}

\section{Classification of cohomogeneity-one actions on \texorpdfstring{$\SO(5, \Cx)/\SO(5)$}{that space}}\label{second space}

In this section we classify, up to orbit equivalence, cohomogeneity-one actions on the noncompact dual of the compact Lie group $\Spin(5)$.

The symmetric space $\SO(5, \Cx)/\SO(5)$ is irreducible of rank 2 and dimension 10. Its root system is $B_2$ and can be identified with the root system of the complex simple Lie algebra $\mk{so}(5, \Cx)$. Consequently, all the root spaces have complex dimension 1. Let $\Uplambda = \set{\upalpha_1, \upalpha_2}$, where $\upalpha_1$ is the longest root. Then $\Upsigma^+ = \set{\upalpha_1, \upalpha_2, \upalpha_1 + \upalpha_2, \upalpha_1 + 2\upalpha_2}$. We also have $\mk{k}_0 = i\mk{a} = \Rl i H_{\upalpha_1} \oplus^\perp \Rl i H^2 = \Rl i H^1 \oplus^\perp \Rl i H_{\upalpha_2} \simeq \mk{u}(1) \oplus \mk{u}(1)$, where $i$ the complex structure of $\mk{so}(5,\Cx)$. The fact that $\upalpha_1$ and $\upalpha_2$ have different lengths implies that the corresponding boundary components $B_1 \simeq \Rl H^3$ and $B_2 \simeq \Rl H^3$ have different sectional curvatures ($- ||\upalpha_1||^2$ and $-||\upalpha_2||^2$, respectively) and thus are not congruent (which reflects the asymmetry of the Dynkin diagram $B_2$).

\begin{theorem}\label{SO(5,C)/SO(5)}
Let $H \subset G = \SO(5,\Cx)$ be a closed connected subgroup acting on $M$ with cohomogeneity one. Then its action is orbit equivalent to exactly one of the following:

\begin{enumerate}[\normalfont (1)]
    \item The action of the connected Lie subgroup $H_\ell$ of $G$ with Lie algebra
    $$
    \mk{h}_\ell = (\mk{a} \ominus \ell) \oplus \mk{n},
    $$
    where $\ell$ is a one-dimensional linear subspace of $\mk{a}$. Its orbits are all isometrically congruent to each other and form a Riemannian foliation on $M$.
    \item The action of the connected Lie subgroup $H_i, i \in \set{1,2},$ of $G$ with Lie algebra
    $$
    \mk{h}_i = \mk{a} \oplus (\mk{n} \ominus \ell_i),
    $$
    where $\ell_i$ is any one-dimensional linear subspace of $\mk{g}_{\upalpha_i}$. Its orbits form a Riemannian foliation on $M$ and there is exactly one minimal orbit.
    \item The action of the subgroup $\SO(4, \Cx)$ of $G$ embedded in a standard way. It has a $6$-dimensional totally geodesic singular orbit isometric to $\SO(4, \Cx)/\SO(4) \simeq \SL(2, \Cx)/\SU(2) \times \SL(2, \Cx)/\SU(2) \simeq \Rl H^3 \times \Rl H^3$.
    \item The action of the connected Lie subgroup $H^\Uplambda_{j,0}, j \in \set{1,2}$, of $G$ with Lie algebra
    $$
    \mk{h}_{j,0}^\Uplambda = \mk{k}_j \oplus \mk{a}_j \oplus \mk{n}_j.
    $$
    This action has a minimal singular orbit of codimension $3$ and can be obtained by canonical extension of the cohomogeneity-one action on $B_j$ with a single point as a singular orbit.
    \item The action of the connected Lie subgroup $H^\Uplambda_{j,1}, j \in \set{1,2}$, of $G$ with Lie algebra
    $$
    \mk{h}_{j,1}^\Uplambda = \mk{k}_0 \oplus \mk{a} \oplus \mk{n}_j,
    $$
    This action has a minimal singular orbit of codimension $2$ and can be obtained by canonical extension of the cohomogeneity-one action on $B_j$ with a geodesic as a singular orbit.
\end{enumerate}
\end{theorem}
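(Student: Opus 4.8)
The plan is to run through the five classes of cohomogeneity-one actions in Theorem~\ref{maintheorem} and show that, for $M=\SO(5,\Cx)/\SO(5)$, they yield exactly the actions (1)--(5). A recurring point is that the Dynkin diagram of $M$ is of type $B_2$ and hence admits no nontrivial automorphism, so the indices $1$ and $2$ must be treated separately; in particular the two boundary components $B_1\simeq\Rl H^3$ and $B_2\simeq\Rl H^3$ are never congruent (they have different constant curvatures). Case~(1) of Theorem~\ref{maintheorem} produces at once items (1) and (2), and since $\Aut^\mathrm{w}(\DD_M)$ is trivial, $H_1$ and $H_2$ are not orbit equivalent. For case~(2)(i) I would read off from the tables of \cite{berndttamarusingtotgeod} the reflective submanifolds of $M$ whose normal bundle has rank one: for this $M$ the only one is $\SO(4,\Cx)/\SO(4)\simeq\Rl H^3\times\Rl H^3$, and I would check directly that the slice representation of $\SO(4,\Cx)$ at $o$, which is the isotropy representation of the rank-one space $F^\perp$, is transitive on spheres. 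This gives item (3).

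For the canonical extensions, case~(2)(ii)(a), I would use that on $\Rl H^3$ there are, up to strong orbit equivalence, exactly two cohomogeneity-one actions with a singular orbit: one with a point and one with a geodesic as singular orbit. Feeding these into Proposition~\ref{rank one proposition}(2) together with the simplification at the end of Subsection~\ref{error correction} (replacing $\mk{h}_j$ by $Z_{\mk{k}_0}(\mk{b}_j)\oplus\mk{h}_j$) yields exactly the Lie algebras $\mk{h}_{j,0}^\Uplambda=\mk{k}_j\oplus\mk{a}_j\oplus\mk{n}_j$ and $\mk{h}_{j,1}^\Uplambda=\mk{k}_0\oplus\mk{a}\oplus\mk{n}_j$, $j\in\set{1,2}$, i.e.\ items (4) and (5). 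To see that the two values of $j$ give inequivalent actions inside (4) and inside (5), I would identify the normal space of the singular orbit at $o$: for $\mk{h}_{j,0}^\Uplambda$ it is $\mk{b}_j=T_oB_j$, and for $\mk{h}_{j,1}^\Uplambda$ it is $\mk{p}_{\upalpha_{3-j}}$, tangent to a totally geodesic $\Rl H^2$ of curvature $-\|\upalpha_{3-j}\|^2$; since an orbit equivalence carries normal spaces to normal spaces and these totally geodesic submanifolds have different curvatures, no equivalence can exist.

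The core of the argument is case~(2)(ii)(b): showing that the nilpotent construction produces nothing new. The representations involved are small, so the classification of \cite{protohomogeneous} is not needed. For $j=2$ one has $\mk{n}_2^1=\mk{g}_{\upalpha_2}\oplus\mk{g}_{\upalpha_1+\upalpha_2}\simeq\Cx^2$, on which $K_2$ acts essentially as $\U(2)$; for $j=1$ one has $\mk{n}_1^1=\mk{g}_{\upalpha_1}\oplus\mk{g}_{\upalpha_1+\upalpha_2}\oplus\mk{g}_{\upalpha_1+2\upalpha_2}\simeq\mathrm{Sym}^2\Cx^2\simeq\Cx^3$, with $\mk{m}_1\simeq\mk{u}(1)\oplus\mk{sl}(2,\Cx)$ acting by scalar multiplication of the $\U(1)$ factor together with the irreducible three-dimensional ($\SO(3)$-) representation of $\SU(2)$. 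I would classify the protohomogeneous subspaces $\mk{v}$ with $\dim\mk{v}\geqslant 2$ by hand, being careful about $K_j$ versus $K_j^0$ (cf.\ the remarks following Theorem~\ref{maintheorem}): for $j=2$ these are the complex lines, all $K_2$-congruent, and $\mk{n}_2^1$ itself; for $j=1$ they are the complex lines --- the ``conic'' $\SU(2)$-orbit together with a one-parameter family of others --- and a single three-dimensional real form. For each candidate I would compute $N_{\mk{m}_j}(\mk{v})$ by bracketing bases and test whether its projection to $\mk{p}$ along $\mk{k}$ is all of $\mk{b}_j$. I expect: the whole of $\mk{n}_2^1$ is admissible and its action has a totally geodesic singular orbit, namely an orthogonal product of two commuting totally geodesic $\Rl H^3$'s of equal curvature, so by case~(2)(i) it coincides with item (3); the complex line in $\mk{n}_j^1$ is admissible and, after passing to $\uptheta N_{L_j}(\mk{v})$, has Lie algebra $\mk{h}_{3-j,1}^\Uplambda$, hence reproduces item (5); and every remaining protohomogeneous subspace of $\mk{n}_1^1$ fails admissibility --- for a non-conic complex line $N_{\mk{m}_1}(\mk{v})$ collapses to a Cartan subalgebra of $\widetilde{\mk{g}}_1$ together with the central $\Rl iH^1$, whose image in $\mk{p}$ is only a line, and for the real form it collapses to $\widetilde{\mk{g}}_1\cap\mk{k}$, with image $\set{0}$.

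It then remains to assemble the list and check that there are no repetitions: the actions in (1)--(2) are foliations whereas those in (3)--(5) have a singular orbit of codimension $4$, $3$, $2$ respectively, so (3), (4), (5) are mutually inequivalent; the two choices of $j$ within (4) and within (5) are separated by the curvature argument above; and the parameters $\ell$ and $t$ are distinguished exactly as in the general discussion after Theorem~\ref{maintheorem}. The step I expect to be the real obstacle is the nilpotent construction for $j=1$: unlike the $j=2$ case, the representation $\mathrm{Sym}^2\Cx^2$ carries extra protohomogeneous subspaces (the one-parameter family of non-conic complex lines and the real form), and excluding these requires an honest, if elementary, normalizer computation inside $\mk{u}(1)\oplus\mk{sl}(2,\Cx)$.
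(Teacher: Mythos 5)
Your overall architecture matches the paper's: the five cases of Theorem \ref{maintheorem}, item (3) read off from \cite{berndttamarusingtotgeod} with a slice-representation check, items (4) and (5) obtained by canonical extension from $B_j \simeq \Rl H^3$ via Proposition \ref{rank one proposition} and the simplification of Subsection \ref{error correction}, the two values of $j$ separated by the curvature of the totally geodesic submanifold tangent to the normal space of the singular orbit, and the nilpotent construction shown to reproduce only (3) and (5). The individual outcomes you predict (which subspaces are admissible and which Lie algebras they yield) are all correct.

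The genuine gap is your claimed classification of protohomogeneous subspaces, which is incomplete for both values of $j$, so the subsequent admissibility check would silently skip candidates that must be ruled out. For $j=2$, the protohomogeneous subspaces of $\mk{n}_2^1 \simeq \Cx^2$ under $K_2 \simeq \U(2)$ of dimension at least $2$ are not just the complex lines and $\mk{n}_2^1$ itself: every real $2$-plane of constant K\"ahler angle $\upvarphi \in [0,\frac{\uppi}{2}]$ is protohomogeneous (for $\upvarphi = \frac{\uppi}{2}$ take $\SO(2) \subset \U(2)$; for intermediate angles see \cite[Proposition 7]{berndt_bruck}), and the planes with $\upvarphi \neq 0$ must be excluded by an admissibility computation — the paper does this by citing \cite[Theorem 6]{berdntdominguez-vazquez}, which says precisely that only $\upvarphi = 0$ is admissible. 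For $j=1$ the omission is more serious: besides the complex lines and the real form, there are non-complex protohomogeneous $2$-planes obtained by rotating a vector by a circle in $\mk{k}_0$. Concretely, with weight vectors $e_m \in \mk{g}_{\upalpha_1 + m\upalpha_2}$, the element $i\hspace{0.5pt}d\hspace{0.5pt}h_{\upalpha_2}$ acts on $e_0, e_2$ with eigenvalues $\mp 2id$, so $\mk{v} = \vspan_\Rl\set{e_0 + e_2,\; i(e_2 - e_0)}$ is rotated by $\exp(\Rl\, i h_{\upalpha_2}) \subseteq K_1^0$ and is protohomogeneous but is neither a complex line nor the real form. These planes are exactly what the paper's $j=1$ computation spends most of its effort on: it first invokes \cite[Proposition 5]{berdntdominguez-vazquez} to force $N_{\mk{m}_1}(\mk{v}) \subseteq \mk{k}_0 \oplus \mk{a}^1 \oplus \mk{g}_{\upalpha_2}$, deduces $N_{\mk{k}_1}(\mk{v}) \subseteq \mk{k}_0$ and hence $\dim \mk{v} = 2$ with $\mk{v} = \Rl v \oplus \Rl\, \ad(i(d_0 h_{\upalpha_2} + e_0 H^1))v$, and then solves the resulting linear system to show admissibility fails unless $v$ is a highest weight vector. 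If you keep your order of operations (protohomogeneity first, admissibility second), you must enlarge the candidate list accordingly; otherwise the cleaner route is the paper's, which constrains the normalizer before ever classifying $\mk{v}$.
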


\begin{proof}
We consider different cases of Theorem \ref{maintheorem}. If the orbits of $H$ form a foliation, we get the actions in (1) and (2). The action in (3) is the only one that has a totally geodesic singular orbit according to \cite{berndttamarusingtotgeod}. The stabilizer of $\SO(4,\Cx)$ at $o$ is $\SO(4)$, whose slice representation at $o$ is equivalent to the tautological one (we have $(\SO(4,\Cx) \ccdot o)^\perp \simeq \Rl H^4$), hence $\SO(4,\Cx)$ does indeed act with cohomogeneity one. It is also worth noting that we can take 
$$
\mk{so}(4, \Cx) =\mk{g}_0 \oplus \bigoplus_{\substack{\upalpha \in \Upsigma \\ \text{long root}}} \mk{g}_{\upalpha}.
$$
The long roots in $B_2$ form a root system $A_1 \sqcup A_1$ sitting inside $B_2$, and the hyperbolic spaces in the decomposition $\SO(4, \Cx)/\SO(4) \simeq \Rl H^3 \times \Rl H^3$ have the same curvature.

Now we determine the actions arising via the canonical extension method. Each boundary component $B_j$ is isometric to the real hyperbolic space $\Rl H^3$ (with different curvatures depending on $j$), which has precisely two cohomogeneity-one actions with a singular orbit up to strong orbit equivalence. One of them is the action of the (restricted) isotropy subgroup of $\Rl H^3$, which has a single point as a singular orbit and whose canonical extension is described in (4). The other one has a geodesic as a singular orbit and, by Proposition \ref{rank one proposition} and the observation made at the end of Subsection \ref{error correction}, is given by the connected Lie subgroup corresponding to $N_{\mk{k}_j}(\mk{a}^j) \oplus \mk{a}^j = \mk{k}_0 \oplus \mk{a}^j$. Its canonical extension is given in (5). Observe that the actions in (4) and (5) are not orbit equivalent to each other because the normal spaces of their singular orbits are tangent to the corresponding boundary components and thus have different sectional curvatures.

Now we proceed to the main part, namely to the nilpotent construction. Since the Dynkin diagram $B_2$ is asymmetric, we have to consider two cases.

\textit{Nilpotent construction with} $j=2$. In this case we have:
\begin{align*}
    \mk{n}_2^1 &= \mk{g}_{\upalpha_2} \oplus \mk{g}_{\upalpha_1 + \upalpha_2} \simeq \Cx^2, \\
    \mk{l}_2 &= \mk{g}_{-\upalpha_1} \oplus \mk{g}_0 \oplus \mk{g}_{\upalpha_1} = \mk{g}_2 \oplus \mk{z}_2 \oplus \mk{a}_2 \simeq \mk{sl}(2, \Cx) \oplus \mk{u}(1) \oplus \mk{u}(1) \simeq \mk{gl}(2, \Cx), \\
    \mk{k}_2 &= \mk{k}_0 \oplus \mk{k}_{\upalpha_1} = (\mk{g}_2 \cap \mk{k}) \oplus \mk{z}_2 \simeq \mk{su}(2) \oplus \mk{u}(1) \simeq \mk{u}(2).
\end{align*}
The adjoint representation of $\mk{g}_2$ on $\mk{n}_2^1$ is a nontrivial complex representation, hence it is equivalent to the irreducible representation of $\mk{sl}(2, \Cx)$ on $\Cx^2$. The adjoint representation of $\mk{a}_2 \oplus \mk{z}_2 = \Cx H^2$ on $\mk{n}_2^1$ is the standard complex representation by scalars because $H^2$ acts on $\mk{n}_2^1$ by multiplication by $\bilin{\upalpha_2}{H^2} = 1$. It follows that the adjoint representation of $\mk{l}_2$ (respectively, $\mk{k}_2$) on $\mk{n}_2^1$ is equivalent to the tautological representation of $\mk{gl}(2, \Cx)$ (respectively, $\mk{u}(2)$) on $\Cx^2$. Since $\mk{l}_2$ is a $\uptheta$-stable Lie subalgebra of $\mk{g}$, we may assume $\uptheta$ corresponds to the standard Cartan involution on $\mk{gl}(2, \Cx)$ (minus the adjoint of a matrix). Therefore, the problem of finding admissible and protohomogeneous subspaces of the \mbox{$L^0_2$\hspace{0.5pt}-module} $\mk{n}_2^1$ is equivalent to the analogous problem for the tautological representation of $\GL(2, \Cx)$.

Let $\mk{v} \subseteq \mk{n}_2^1$ be a linear subspace of dimension at least 2. According to Lemma 1 and Proposition 7 in \cite{berndt_bruck}, $\mk{v}$ is protohomogeneous if and only if it has constant K\"{a}hler angle. In particular, it must be even-dimensional. If $\mk{v} = \mk{n}_2^1$, then it is trivially admissible and we have $\mk{h}_{2, \mk{v}} = \mk{l}_2 \oplus \mk{g}_{\upalpha_1 + 2\upalpha_2} = \mk{so}(4, \Cx) \ominus \mk{g}_{-\upalpha_1 - 2\upalpha_2}$. Since we know that $H_{2, \mk{v}}$ acts with cohomogeneity one, it follows that it has the same orbits as $\SO(4, \Cx)$, so its action has already been taken into account in (3). Now suppose $\mk{v}$ has real dimension 2. It was shown in \cite[Theorem 6]{berdntdominguez-vazquez} that such a subspace is admissible if and only if its K\"{a}hler angle is zero, i.e. if it is a complex subspace. Up to the action of $K_2^0 \simeq \U(2)$, we may assume $\mk{v} = \mk{g}_{\upalpha_2}$. We have 
$$
\mk{h}_{2, \mk{v}} = \mk{l}_2 \oplus \mk{g}_{\upalpha_1 + \upalpha_2} \oplus \mk{g}_{\upalpha_1 + 2\upalpha_2} = \mk{g}_{-\upalpha_1} \oplus \mk{g}_0 \oplus \mk{n}_1.
$$
If we look back at (5), we see that $\mk{h}_{1,1}^\Uplambda$ is a Lie subalgebra of $\mk{h}_{2, \mk{v}}$. Since $H_{1,1}^\Uplambda$ acts with cohomogeneity one, its orbits coincide with the orbits of $H_{2, \mk{v}}$. Altogether, we see that the nilpotent construction method with $j=2$ does not give rise to any new actions.

\textit{Nilpotent construction with} $j=1$. In this case we have:
\begin{align*}
    \mk{n}_1^1 &= \mk{n}_1 = \mk{g}_{\upalpha_1} \oplus \mk{g}_{\upalpha_1 + \upalpha_2} \oplus \mk{g}_{\upalpha_1 + 2\upalpha_2} \simeq \Cx^3, \\
    \mk{l}_1 &= \mk{g}_{-\upalpha_2} \oplus \mk{g}_0 \oplus \mk{g}_{\upalpha_2} = \mk{g}_1 \oplus \mk{z}_1 \oplus \mk{a}_1 \simeq \mk{sl}(2, \Cx) \oplus \mk{u}(1) \oplus \mk{u}(1) \simeq \mk{gl}(2, \Cx), \\
    \mk{k}_2 &= \mk{k}_0 \oplus \mk{k}_{\upalpha_2} = (\mk{g}_1 \cap \mk{k}) \oplus \mk{z}_1 \simeq \mk{su}(2) \oplus \mk{u}(1) \simeq \mk{u}(2).
\end{align*}
We fix an isomorphism between $\mk{sl}(2, \Cx)$ and $\mk{g}_1$ by sending the standard basis $e,f,h$ of the former to $X, \uptheta X, h_{\upalpha_2} \in \mk{g}_1$, where $X \in \mk{g}_{\upalpha_2}, \uptheta X \in \mk{g}_{-\upalpha_2}$, and $h_{\upalpha_2} = \frac{2}{||\upalpha_2||^2} H_{\upalpha_2}$. Observe that $\mk{n}_1$ is an $\upalpha_2$-string, so it is an irreducible complex representation of $\mk{g}_1 \simeq \mk{sl}(2, \Cx)$. Since it is 3-dimensional, it is isomorphic to the adjoint representation of $\mk{sl}(2, \Cx)$. Now, $\mk{sl}(2, \Cx)$ is simple, hence it is also simple over $\Rl$, which means that the representation of $\mk{g}_1$ on $\mk{n}_1$ is irreducible as a real representation. An alternative description of this representation is the second symmetric power of the tautological representation of $\mk{sl}(2, \Cx)$. The adjoint representation of $\mk{a}_1 \oplus \mk{z}_1 = \Cx H^1$ on $\mk{n}_1^1$ is the standard complex representation by scalars because $H^1$ acts on $\mk{n}_1$ by multiplication by $\bilin{\upalpha_1}{H^1} = 1$. Consequently, we can describe the adjoint representation of $\mk{l}_1 \simeq \mk{sl}(2, \Cx) \oplus \Cx$ (respectively, $\mk{k}_1 \simeq \mk{su}(2) \oplus \mk{u}(1)$) on $\mk{n}_1$ as the exterior tensor product $\uprho_3 \otimes \upsigma$, where $\uprho_3$ is the irreducible 3-dimensional complex representation of $\mk{sl}(2, \Cx)$ (respectively, of $\mk{su}(2)$) and $\upsigma$ is the tautological representation of $\Cx$ (respectively, $\mk{u}(1)$) on $\Cx$.

Now we need to determine -- up to $K_1^0 \simeq \U(2)$ -- all subspaces $\mk{v}$ of $\mk{n}_1$ which are both protohomogeneous and admissible. We begin by borrowing an argument from \cite[Proposition 6]{berdntdominguez-vazquez}. The case $\mk{v} = \mk{n}_1$ can be excluded straight away since $\U(2)$ cannot act transitively on the 5-sphere. According to \cite[Proposition 5]{berdntdominguez-vazquez}, we may assume that $N_{\mk{m}_1}(\mk{v}) = \uptheta N_{\mk{m}_1}(\mk{n}_{1, \mk{v}})$ is contained in $(\mk{g}_1 \cap (\mk{g}_0 \oplus \mk{n})) \oplus \mk{z}_1 = \mk{k}_0 \oplus \mk{a}^1 \oplus \mk{g}_{\upalpha_2}$. In that case, $N_{\mk{k}_1}(\mk{v})$ is contained in $\mk{k}_0 \simeq \mk{u}(1) \oplus \mk{u}(1)$. But then $\mk{v}$ must be 2-dimensional, for $K_0^0 \simeq \U(1) \times \U(1)$ cannot act transitively on any sphere of dimension greater than one. We will show that -- under the assumption $N_{\mk{m}_1}(\mk{v}) \subseteq \mk{k}_0 \oplus \mk{a}^1 \oplus \mk{g}_{\upalpha_2}$ -- the only option for $\mk{v}$ is $\mk{g}_{\upalpha_1 + 2\upalpha_2}$.

Take nonzero vectors $e_j \in \mk{g}_{\upalpha_1 + j\upalpha_2}, j \in \set{0,1,2},$ such that $e_j = \ad(X) e_{j-1}$. Let $v = \sum_{j=0}^2 z_j e_j \in \mk{v}$ be a nonzero vector. A generic element of $\mk{k}_0 \oplus \mk{a}^1 \oplus \mk{g}_{\upalpha_2}$ can be written as
$$
Y = (a + ib) X + c h_{\upalpha_2} + i(d h_{\upalpha_2} + e H^1), 
$$
where $a, b, c, d,$ and $e$ are some real numbers. If $\mk{v}$ lies in $\mk{g}_{\upalpha_1} \oplus \mk{g}_{\upalpha_1 + \upalpha_2}$, then $a$ and $b$ must be zero for $Y$ to normalize $\mk{v}$, which means that the projection of $N_{\mk{m}_1}(\mk{v})$ in $\mk{p}$ will be smaller than $\mk{b}_1$ in that case. Hence, we may assume that $z_2 \ne 0$ and, for the sake of contradiction, that either $z_0$ or $z_1$ is also nonzero. Observe that $\ad(h_{\upalpha_2})$ acts diagonally on $\mk{n}_1 = \mk{g}_{\upalpha_1} \oplus \mk{g}_{\upalpha_1 + \upalpha_2} \oplus \mk{g}_{\upalpha_1 + 2\upalpha_2}$ with eigenvalues $-2, 0,$ and $2$, whereas $\ad(H^1)$ acts as the identity on the whole $\mk{n}_1$. Consequently, $i(d h_{\upalpha_2} + e H^1)$ acts on $\mk{n}_1$ diagonally with eigenvalues $i(e-2d), ie, i(e+2d)$. It implies that $N_{\mk{k}_1}(\mk{v}) \subseteq \mk{k}_0$ must be one-dimensional (otherwise it would be the whole $\mk{k}_0$ and $\mk{v}$ would have to be of dimension at least 3). In this situation, $N_{\mk{k}_1}(\mk{v})$ is spanned by some $i(d_0 h_{\upalpha_2} + e_0 H^1)$ and $\mk{v} = \Rl \hspace{0.2pt} v \hspace{0.5pt} \oplus \hspace{0.5pt} \Rl \hspace{1pt} \ad(i(d_0 h_{\upalpha_2} + e_0 H^1)) \hspace{0.4pt} v$. In order for $\ad(Y)$ to normalize $\mk{v}$, $\ad(Y) \hspace{0.5pt} v$ must be a linear combination of $v$ and $\ad(i(d_0 h_{\upalpha_2} + e_0 H^1)) v$, which boils down to the following system of equations:
$$
\begin{cases}
(-2c + i(e-2d))z_1 = \uplambda z_1 + i\upmu (e_0 - 2d_0)z_1, \\
(a+ib)z_1 + i e z_2 = \uplambda z_2 + i\upmu e_0 z_2, \\
(a+ib)z_2 + (2c + i(e+2d))z_3 = \uplambda z_3 + i\upmu (e_0 + 2d_0)z_3.
\end{cases}
$$
First assume that $z_1 \ne 0$. The first equation then implies $\uplambda = -2c$, which, when substituted into the second one, gives 
$$
a+ib = \frac{z_2}{z_1}(-2c + i(\upmu e_0 - e)).
$$
By writing $\dfrac{z_2}{z_1} = f + ig$, we obtain
$$
\begin{cases}
a = -2fc - g(\upmu e_0 - e), \\
b = -2gc + f(\upmu e_0 - e).
\end{cases}
$$
Whatever $f$ and $g$ are, we get a linear dependency on $a, b, e$, which means that $\mk{v}$ cannot be admissible. In case $z_1 = 0$ but $z_2 \ne 0$, the second equation of our system implies $\uplambda = 0$, which transforms the third equation into
$$
a+ib = \frac{z_3}{z_2}(-2c + i(\upmu(e_0 + 2d_0) - e - 2d)).
$$
In a similar way, we get a linear dependency on $a, b, e$.

The upshot of the above argument is that, up to $K_1^0$, $\mk{v}$ must be equal to $\mk{g}_{\upalpha_1 + 2\upalpha_2}$. Observe that this is indeed an admissible and protohomogeneous subspace. Since the representation of $\SU(2) \subset K_1^0$ on $\mk{n}_1$ is equivalent to the 3-dimensional complex irreducible representation of $\SU(2)$, there exists an element of $K_1^0$ that maps $\mk{g}_{\upalpha_1 + 2\upalpha_2}$ onto $\mk{g}_{\upalpha_1}$ (in terms of the representation on the space of quadratic polynomials, we can take the special unitary matrix $\left[ \begin{smallmatrix} 0 & -1 \\ 1 & \hspace{0.5em}0 \end{smallmatrix} \right]$, which induces $y^2 \mapsto x^2$). We have:
\begin{align*}
    \mk{n}_{1, \mk{g}_{\upalpha_1}} &= \mk{g}_{\upalpha_1 + \upalpha_2} \oplus \mk{g}_{\upalpha_1 + 2\upalpha_2}, \\
    N_{\mk{m}_1}(\mk{n}_{1, \mk{g}_{\upalpha_1}}) &= \uptheta N_{\mk{m}_1}(\mk{g}_{\upalpha_1}) = \mk{k}_0 \oplus \mk{a}^1 \oplus \mk{g}_{\upalpha_2}, \\
    \mk{h}_{1, \mk{g}_{\upalpha_1}} &= N_{\mk{m}_1}(\mk{n}_{1, \mk{g}_{\upalpha_1}}) \oplus \mk{a}_1 \oplus \mk{n}_{1, \mk{g}_{\upalpha_1}} = \mk{g}_0 \oplus \mk{g}_{\upalpha_2} \oplus \mk{g}_{\upalpha_1 + \upalpha_2} \oplus \mk{g}_{\upalpha_1 + 2\upalpha_2} = \mk{g}_0 \oplus \mk{n}_2.
\end{align*}
But looking at the actions given in (5), one sees that $\mk{h}_{1, \mk{g}_{\upalpha_1}} \hspace{-2.5pt} = \hspace{0.5pt} \mk{h}_{2,1}^\Uplambda$, which means that the actions of $H_{2,1}^\Uplambda$ and $H_{1, \mk{g}_{\upalpha_1}}$ have the same orbits. We conclude that the nilpotent construction method does not give any new actions for $M$.
\end{proof}

\section{Classification of cohomogeneity-one actions on the noncompact complex two-plane Grassmannians}\label{third space}

In this section we classify, up to orbit equivalence, cohomogeneity-one actions on the noncompact complex Grassmann manifolds $\mathrm{Gr}^*(2, \Cx^{n+4}) = \SU(n+2,2)/\mathrm{S}(\U(n+2)\U(2)), n \geqslant 1$.

The symmetric space $M = \SU(n+2,2)/\mathrm{S}(\U(n+2)\U(2))$ is irreducible of rank 2 and dimension $4n+8$, and its root system is $BC_2$. This is the complex analog of the symmetric space $\mathrm{Gr}^*(2, \Rl^{n+4}) = \SO^0(n+2,2)/\SO(n+2)\SO(2)$ of type $B_2$, the classification of cohomogeneity-one actions on which, as we mentioned earlier, was obtained by Berndt and Dom\'{i}nguez-V\'{a}zquez in \cite{berdntdominguez-vazquez}. If we choose simple roots $\upalpha_1$ and $\upalpha_2$ such that $2\upalpha_2$ is also a root, then we have $\Upsigma^+ = \set{\upalpha_1, \upalpha_2, 2\upalpha_2, \upalpha_1 + \upalpha_2, \upalpha_1 + 2\upalpha_2, 2\upalpha_1 + 2\upalpha_2}$, where $\upalpha_1$ and $\upalpha_1 + 2\upalpha_2$ have multiplicity 2, $\upalpha_2$ and $\upalpha_1 + \upalpha_2$ have multiplicity $2n$, and $2\upalpha_2$ and $2\upalpha_1 + 2\upalpha_2$ have multiplicity 1. The Lie algebra $\mk{k}_0$ is isomorphic to $\mk{u}(n) \oplus \mk{u}(1)$. Observe that $M$ is a Hermitian symmetric space and it also has a quaternion-K\"{a}hler structure. Let $I$ and $\mathcal{J} \subseteq \End(TM)$ denote the almost complex and quaternion-K\"{a}hler structures of $M$, respectively (the latter is a rank-3 subbundle of the endomorphism bundle of $TM$). We will use the interplay between them to distinguish between non-equivalent cohomogeneity-one actions on $M$. The boundary components $B_1$ and $B_2$ are isometric to $\Cx H^{n+1}$ and $\Rl H^3$, respectively. Therefore, for the first time, we are encountering a noncompact rank-2 symmetric space containing a boundary component not isometric to a real hyperbolic space. The reason why this is special is because, unlike real hyperbolic spaces, complex hyperbolic spaces have a nondiscrete moduli space of cohomogeneity-one actions with a singular orbit, so the canonical extension method, when applied to $B_1$, will produce a one-parameter family of cohomogeneity-one actions with a singular orbit on $M$. In order to be able to formulate the theorem, we need to know what the almost complex structure $I$ on $M$ looks like in terms of the restricted root space decomposition of $\mk{g}$. With respect to $I_o$, $\mk{p}_{\upalpha_2}$ and $\mk{p}_{\upalpha_1 + \upalpha_2}$ are complex subspaces of $\mk{p}$. Moreover, $I_o$ swaps $\mk{p}_{\upalpha_1}$ and $\mk{p}_{\upalpha_1 + 2\upalpha_2}$, so each of them is a totally real subspace. Finally, $I_o$ sends $\Rl H_{\upalpha_2}$ to $\mk{p}_{2\upalpha_2}$ and $\Rl H^1$ to $\mk{p}_{2\upalpha_1 + 2\upalpha_2}$. We pull this complex structure back to $\mk{a} \oplus \mk{n}$ along the isomorphism $\mk{a} \oplus \mk{n} \isoto \mk{p}$. Note that $B_1$ is a complex submanifold of $M$, whereas $B_2$ is a totally real one.

\begin{theorem}\label{Gr^*(2, Cx^{n+2})}
Let $H \subset G = \SU(n+2,2)$ be a closed connected subgroup acting on $M$ with cohomogeneity one. Then its action is orbit equivalent to exactly one of the following:

\begin{enumerate}[\normalfont (1)]
    \item The action of the connected Lie subgroup $H_\ell$ of $G$ with Lie algebra
    $$
    \mk{h}_\ell = (\mk{a} \ominus \ell) \oplus \mk{n},
    $$
    where $\ell$ is a one-dimensional linear subspace of $\mk{a}$. Its orbits are all isometrically congruent to each other and form a Riemannian foliation on $M$.
    \item The action of the connected Lie subgroup $H_i, i \in \set{1,2},$ of $G$ with Lie algebra
    $$
    \mk{h}_i = \mk{a} \oplus (\mk{n} \ominus \ell_i),
    $$
    where $\ell_i$ is any one-dimensional linear subspace of $\mk{g}_{\upalpha_i}$. Its orbits form a Riemannian foliation on $M$ and there is exactly one minimal orbit.
    \item The action of the subgroup $\SU(n+1,2)$ of $G$ embedded in a standard way. It has a totally geodesic singular orbit of codimension $4$ isometric to $\mathrm{Gr}^*(2, \Cx^{n+3}) = \SU(n+1,2)/\mathrm{S}(\U(n+1)\U(2))$. This orbit is a complex/quaternionic submanifold with respect to the complex/quaternion-K\"{a}hler structures of $M$.
    \item The action of the subgroup $\SU(n+2,1)$ of $G$ embedded in a standard way. It has a totally geodesic singular orbit of dimension $2n+4$ isometric to $\mathrm{Gr}^*(1, \Cx^{n+3}) = \Cx H^{n+2} = \SU(n+2,1)/\mathrm{S}(\U(n+2)\U(1))$. This orbit is a complex/totally complex submanifold with respect to the complex/quaternion-K\"{a}hler structures of $M$.
    \item In case $n = 2m \geqslant 2$, the action of the subgroup $\Sp(m+1,1)$ of $G$ embedded in a standard way. It has a totally geodesic singular orbit of dimension $2n + 4$ isometric to $\mathrm{Gr}^*(1, \Hq^{m+2}) = \Hq H^{m+1} = \Sp(m+1,1)/\Sp(m+1)\Sp(1)$. This orbit is a totally real/quaternionic submanifold with respect to the complex/quaternion-K\"{a}hler structures of $M$.
    \item The action of the connected Lie subgroup $H^\Uplambda_{2,k}, k \in \set{0,1}$, of $G$ with Lie algebra
    $$
    \mk{h}_{2,0}^\Uplambda = \mk{k}_2 \oplus \mk{a}_2 \oplus \mk{n}_2, \quad \mk{h}_{2,1}^\Uplambda = \mk{k}_0 \oplus \mk{a} \oplus \mk{n}_2.
    $$
    This action has a minimal singular orbit of codimension $3-k$ and can be obtained by canonical extension of the cohomogeneity-one action on $B_2 \simeq \Rl H^3$ with a single point {\normalfont (}$k=0${\normalfont )} or geodesic {\normalfont (}$k=1${\normalfont )} as a singular orbit.
    \item The action of the connected Lie subgroup $H_{1, (\upvarphi, k)}^\Uplambda$, $\upvarphi \in [0, \frac{\uppi}{2}]$, of $G$ with Lie algebra
    $$
    \mk{h}_{1, (\upvarphi, k)}^\Uplambda = N_{\mathfrak{k}_1}(\mathfrak{w}) \oplus \mk{w} \oplus \mk{a}_1 \oplus \mk{n}_1,
    $$
    where $\mk{w} \subseteq \mk{a}^1 \oplus \mk{g}_{\upalpha_2} \oplus \mk{g}_{2\upalpha_2}$ is the orthogonal complement of a subspace $\mk{w}^\perp \subseteq \mk{a}^1 \oplus \mk{g}_{\upalpha_2} \oplus \mk{g}_{2\upalpha_2}$ of dimension $k$ and constant K\"{a}hler angle $\upvarphi$, and
    
    \begin{enumerate}[\normalfont (a)]
        \item For $\upvarphi = 0$, $k \in \set{2, 4, \ldots, 2n+2}$, and $\mk{w}^\perp$ is required to lie in $\mk{g}_{\upalpha_2}$ unless $k = 2n+2$. In this case, $\mk{w}^\perp$ is just a complex subspace of complex dimension $k/2$;
        \item For $\upvarphi = \frac{\uppi}{2}$, $k \in \set{2, 3, \ldots, n+1}$, and $\mk{w}^\perp$ is required to lie in $\mk{g}_{\upalpha_2}$ unless $k = n+1$, in which case it is required to contain $\mk{g}_{2\upalpha_2}$. In this case, $\mk{w}^\perp$ is a totally real subspace;
        \item For $\upvarphi \in (0, \frac{\uppi}{2})$, $k \in \set{2, 4, \ldots, 2\lfloor \frac{n}{2} \rfloor}$, and $\mk{w}^\perp$ is required to lie in $\mk{g}_{\upalpha_2}$.
    \end{enumerate}
    
    This action has a minimal singular orbit of codimension $k$ and can be obtained by canonical extension of a cohomogeneity-one action on $B_1 \simeq \Cx H^{n+1}$ with a singular orbit.
\end{enumerate}
\end{theorem}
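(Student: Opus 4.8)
The plan is to follow the same scheme as in the proofs of Theorems \ref{SL(3,H)/Sp(3)} and \ref{SO(5,C)/SO(5)}, running through the five cases of Theorem \ref{maintheorem}. Cases (1)(i) and (1)(ii) immediately give the foliations in items (1) and (2). For case (2)(i) I would take from \cite{berndttamarusingtotgeod} the list of reflective submanifolds of $M$ whose normal space has rank $1$: these are the totally geodesic $\mathrm{Gr}^*(2,\Cx^{n+3})$, $\Cx H^{n+2}$, and -- when $n=2m$ -- $\Hq H^{m+1}$, stabilized respectively by the standard embeddings of $\SU(n+1,2)$, $\SU(n+2,1)$, and $\Sp(m+1,1)$. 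For each of them one checks that the slice representation at $o$ is transitive on spheres by identifying it with the isotropy representation of the associated (rank-one, hence hyperbolic) normal submanifold, exactly as done for $L_1$ and $\SL(3,\Cx)$ in the proof of Theorem \ref{SL(3,H)/Sp(3)}; this yields items (3)--(5). The descriptions of these orbits as complex, totally real, totally complex, or quaternionic submanifolds then follow by inspecting how $I_o$ and $\mathcal{J}_o$ act on the associated Lie triple systems, using the root-space description of $I$ recorded above.

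For the canonical extension, $B_2$ and $B_1$ must be treated separately. Since $B_2 \simeq \Rl H^3$, it carries -- up to strong orbit equivalence -- exactly two cohomogeneity-one actions with a singular orbit, namely those with a point and with a geodesic as singular orbit, and parts (1) and (2) of Proposition \ref{rank one proposition} together with the observation at the end of Subsection \ref{error correction} produce their canonical extensions, which are the two actions in item (6). The boundary component $B_1 \simeq \Cx H^{n+1}$ is the genuinely new feature, responsible for the one-parameter family in item (7): here I would invoke the classification of cohomogeneity-one actions on complex hyperbolic space with a singular orbit (\cite{berndt_bruck}, \cite{berndttamarurankone}) up to strong orbit equivalence -- the totally geodesic $\Cx H^k$ and $\Rl H^k$, and the Berndt--Br\"uck examples attached to a subspace of the root space of constant K\"ahler angle -- feed each of these into Proposition \ref{rank one proposition} and the observation at the end of Subsection \ref{error correction}, and translate the resulting data into subspaces $\mk{w}^\perp \subseteq \mk{a}^1 \oplus \mk{g}_{\upalpha_2} \oplus \mk{g}_{2\upalpha_2}$. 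Carefully tracking the admissible dimensions $k$ and their parities, the constant K\"ahler angles $\upvarphi$, and the condition that $\mk{w}^\perp$ lie in $\mk{g}_{\upalpha_2}$ outside the maximal-dimensional cases, one arrives at precisely the cases (7)(a)--(c). This part is bookkeeping-heavy but proceeds along the familiar geometry of $\Cx H^{n+1}$.

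The core of the argument, and the step I expect to be the main obstacle, is to show that the nilpotent construction produces no new action for either value of $j$, the case split being forced by the asymmetry of the Dynkin diagram $BC_2$. For $j=2$ one has $\mk{n}_2^1 = \mk{g}_{\upalpha_2} \oplus \mk{g}_{\upalpha_1+\upalpha_2}$ with noncompact part $\widetilde{\mk{g}}_2 \simeq \mk{sl}(2,\Cx)$; I would pin down the $L_2$-module, hence the $K_2$-module, structure of $\mk{n}_2^1$, reduce the classification of its protohomogeneous subspaces to a known one -- a constant-K\"ahler-angle condition, or a reduction to the standard actions of $\U(m)$ on $\Cx^m$ or $\Sp(m)\Sp(1)$ on $\Hq^m$ in the spirit of \cite{protohomogeneous} and \cite{berdntdominguez-vazquez} -- then for each candidate compute $N_{\mk{m}_2}(\mk{v})$, discard the non-admissible ones by exhibiting a linear dependency in the projection of $N_{\mk{m}_2}(\mk{v})$ to $\mk{p}$ as in the proof of Theorem \ref{SL(3,H)/Sp(3)}, and for the admissible ones show via the horospherical decomposition $M \simeq B_j \times A_j \times N_j$ that the singular orbit coincides with that of a canonical extension already in the list, or is totally geodesic. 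For $j=1$ the module $\mk{n}_1^1 = \mk{g}_{\upalpha_1} \oplus \mk{g}_{\upalpha_1+\upalpha_2} \oplus \mk{g}_{\upalpha_1+2\upalpha_2}$, whose noncompact part is $\widetilde{\mk{g}}_1 \simeq \mk{su}(n+1,1)$, is considerably more intricate; here I would lean on Berndt's differential-geometric analysis of the complex two-plane Grassmannian in \cite{berndtgrassmannian}, together with \cite{berdntdominguez-vazquez}, both to organize the protohomogeneous subspaces and to carry out the admissibility computation. Finally, whenever two surviving actions -- or a surviving action and one already listed -- might be orbit equivalent, I would settle the congruence question by comparing how $I_o$ and $\mathcal{J}_o$ restrict to the normal space of the singular orbit, which is the invariant Berndt uses to separate orbits on the Grassmannian; the expectation, consistent with all rank-2 spaces settled so far, is that every action arising from the nilpotent construction turns out to have a totally geodesic singular orbit or to arise by canonical extension from $B_1$ or $B_2$.
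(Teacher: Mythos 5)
Your overall strategy coincides with the paper's: foliations for (1)--(2), the three reflective submanifolds from \cite{berndttamarusingtotgeod} with slice-representation checks for (3)--(5), canonical extensions from $B_2 \simeq \Rl H^3$ and $B_1 \simeq \Cx H^{n+1}$ via Proposition \ref{rank one proposition} and the classification of constant-K\"ahler-angle subspaces for (6)--(7), a two-case nilpotent construction analysis showing every admissible protohomogeneous subspace reproduces an action already on the list, and Berndt's singular-vector types (type A: $I_oX \perp \mathcal{J}_oX$; type B: $I_oX \in \mathcal{J}_oX$) to separate the extensions from $B_2$ and $B_1$. That last invariant is indeed what the paper uses to show no action in (6) is equivalent to one in (7).

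There is, however, one congruence question that your proposed invariant cannot settle, and it is precisely the one the paper must treat by a different method: whether $H_{1,(0,4)}^\Uplambda$ (item (7)(a) with $k=4$) is orbit equivalent to the $\SU(n+1,2)$-action of item (3). Both singular orbits are complex submanifolds of codimension $4$, so the K\"ahler angle of the normal space does not distinguish them; worse, since the $\SU(n+1,2)$-orbit is both complex and quaternionic, its normal space at $o$ is $I_o$- and $\mathcal{J}_o$-invariant, which forces $I_oX \in \mathcal{J}_oX$ for every nonzero normal vector $X$, i.e.\ the normal space consists entirely of type-B singular vectors --- and so does $N_o(H_{1,(0,4)}^\Uplambda \ccdot o) = \pr_\mk{p}(\mk{w}^\perp)$, since $\mk{p}_{\upalpha_2}$, $\mk{p}_{2\upalpha_2}$, and $\Rl H_{\upalpha_2}$ are all of type B. So comparing $I_o$ and $\mathcal{J}_o$ on the normal spaces is blind to this pair. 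The paper resolves it by showing directly that the singular orbit $S$ of $H_{1,(0,4)}^\Uplambda$ is not totally geodesic: choosing $X \in \mk{g}_{\upalpha_1+\upalpha_2}$ and $Y \in \mk{g}_{-\upalpha_1}$ with $[X,Y] \in \mk{g}_{\upalpha_2} \mysetminus \mk{w}$, one computes $\II(X - \uptheta X, Y - \uptheta Y) = \pr_{N_oS}([X,Y] - \uptheta[X,Y]) \ne 0$. Some such computation is indispensable; without it the assertion that the actions in (3) and (7) are pairwise inequivalent --- and hence the word ``exactly'' in the statement --- is unproved. A minor further remark: for $j=1$ the paper does not invoke \cite{berndtgrassmannian} at all; the nilpotent construction module is the tautological $\mk{u}(n+1,1)$-module $\Cx^{n+2}$, and the $\mk{k}_1$-invariant splitting $\Cx^{n+1} \oplus \Cx$ forces $\dim \mk{v} = 2$ and transversality to both summands, after which a direct linear computation leaves a single complex line corresponding (via a $\Cx$-antilinear identification in the last coordinate) to the totally real subspace $\mk{g}_{\upalpha_1} \subseteq \mk{n}_1^1$ and hence to $H_{2,1}^\Uplambda$.
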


\begin{proof}
We consider different cases of Theorem \ref{maintheorem}. If the orbits of $H$ form a foliation, we get the actions in (1) and (2).

The actions in (3), (4), and (5) are the only ones with a totally geodesic singular orbit according to \cite{berndttamarusingtotgeod}. Let us describe these three orbits in a bit more detail. Recall that $M$ can be thought of as the set of $2$-dimensional complex subspaces of $\Cx^{n+4}$ on which the restriction of the standard indefinite Hermitian form of signature $(n+2,2)$ is negative definite. The group $G = \SU(n+2,2)$ acts transitively on that set and the stabilizer of $o = \langle e_{n+3}, e_{n+4} \rangle_\Cx$ is precisely $K = \Ss(\U(n+2)\U(2))$.

Consider the subset $S_3$ of $M$ consisting of those $2$-dimensional subspaces that lie in the complex hyperplane $\langle e_2, \ldots, e_{n+4} \rangle_\Cx$. This subset is $\mathrm{Gr}^*(2,\Cx^{n+3})$ embedded totally geodesically into $M$. The subgroup $\SU(n+1,2) \subset G$ of elements fixing the first basis vector $e_1$ preserves $S_3$ and acts transitively on it. Its isotropy subgroup at $o$ is $\Ss(\U(n+1)\U(2))$, whose subgroup $\U(2)$ acts transitively on the unit sphere in $N_o S_3$, so $\SU(n+1,2)$ does indeed act on $M$ with cohomogeneity one and has $S_3$ as its singular orbit (we have $S_3^\perp \simeq \Cx H^2$). 

Similarly, define $S_4$ to be the subset of $M$ consisting of those $2$-dimensional subspaces of $\Cx^{n+4}$ that contain the last basis vector $e_{n+4}$. These are in bijective correspondence with the complex lines in the hyperplane $\langle e_1, \ldots, e_{n+3} \rangle_\Cx$ on which the restriction of the standard indefinite Hermitian form of signature $(n+2,1)$ is negative definite, so $S_4$ is actually $\Gr^*(1, \Cx^{n+3}) = \Cx H^{n+2}$ embedded totally geodesically into $M$. The subgroup $\SU(n+2,1) \subset G$ of elements fixing $e_{n+4}$ preserves $S_4$ and acts transitively on it. Its isotropy subgroup at $o$ is $\Ss(\U(n+2)\U(1))$ and its slice representation at $o$ is equivalent to the tautological representation $\U(n+2) \curvearrowright \Cx^{n+2}$, also known as the isotropy representation of $\Cx H^{n+2}$, which is a reflection of the fact that $S_4^\perp \simeq \Cx H^{n+2}$. It is not hard to show that $S_4^\perp$ is a reflective submanifold of $M$. But now observe that $(S_4^\perp)^\perp = S_4$ is of rank 1, so, according to Subsection \ref{classes_of_actions}, $S_4^\perp$ is also a totally geodesic singular orbit of some cohomogeneity-one action. We claim that this action is orbit equivalent to the action of $\SU(n+2,1)$, i.e. that $S_4$ and $S_4^\perp$ are congruent. Indeed, $S_4$ is easily seen to be a totally complex submanifold with respect to the quaternion-K\"{a}hler structure of $M$, which means that there exists a one-dimensional subspace $\mathcal{J}'_o \subset \mathcal{J}_o$ preserving $T_o S_4$ and $J(T_o S_4) = N_o S_4 = T_o S_4^\perp$ for each $J \in \mathcal{J}_o$ orthogonal to $\mathcal{J}'_o$. Each such $J$ of square $-1$ can be realized as the differential at $o$ of some $k \in K$ (see the discussion of the quaternion-K\"{a}hler structure of $M$ below), so $k$ is an isometry mapping $S_4$ onto $S_4^\perp$. Note that the geodesic reflection in $S_4$ is a holomorphic involutive isometry. The submanifold $S_4$ is called a complex form of $M$ (see \cite{wolf_complexforms}). 

Finally, if $n=2m \geqslant 2$, consider the standard indefinite quaternionic Hermitian form $H$ on $\Hq^{m+2}$ of signature $(m+1,1)$ and identify $\Hq^{m+2}$ with $\Cx^{2m+4}$ in a standard way. Note that if we write $H = h - \omega j$, where $h, \omega \colon \Hq^{m+2} \times \Hq^{m+2} \to \Cx$, then $h$ is precisely our indefinite Hermitian form on $\Cx^{n+4}$. Consider the subset $S_5$ of $M$ consisting of the quaternionic lines in $\Cx^{n+4}$ the restriction of $H$ to which is negative definite. This subset is $\Gr^*(1, \Hq^{m+2}) = \Hq H^{m+1}$ embedded totally geodesically into $M$. The group $\Sp(m+1,1)$ sits naturally inside $G$ and preserves $S_5$. Its isotropy subgroup at $o$ is $\Sp(m+1)\Sp(1)$, whose slice representation at $o$ is equivalent to the standard representation on $\Hq^{m+1}$. Just like before, we have $S_5^\perp \simeq \Hq H^{m+1} \simeq S_5$, and these two submanifolds are in fact congruent. Their tangent spaces at $o$ are mapped to each other by $I_o$ because $S_5$ is totally real. But just as was the case with the almost complex structures in $\mathcal{J}_o$, $I_o$ also comes from $K$, so the corresponding $k \in K$ maps $S_5$ onto $S_5^\perp$. The geodesic reflection in $S_5$ is an antiholomorphic involutive isometry, for which reason $S_5$ is called a real form of $M$ (see \cite{jaffee_realforms}).

Now we proceed to the canonical extension method. There are two strong orbit equivalence classes of cohomogeneity-one actions on $B_2 \simeq \Rl H^3$ with a singular orbit, whose extensions are described in (6). Now consider $B_1 \simeq \Cx H^{n+1}$. All cohomogeneity-one actions on $B_1$ with a non-totally-geodesic singular orbit come from the nilpotent construction method. The nilpotent construction representation for $\Cx H^{n+1}$ is equivalent to the tautological representation $\U(n) \curvearrowright \Cx^n$. We know that if two protohomogeneous subspaces of $\mk{g}_{\upalpha_2} \simeq \Cx^n$ are $\U(n)$-congruent, they produce strongly orbit equivalent cohomogeneity-one actions on $\Cx H^{n+1}$. It was shown in \cite[Theorem 4.1(ii)]{berndttamarurankone} that the converse is true. To be precise, the following are equivalent for protohomogeneous subspaces $\mk{v}, \mk{v}' \subseteq \mk{g}_{\upalpha_2}$:

\begin{enumerate}[(i)]
    \item The cohomogeneity-one actions on $\Cx H^{n+1}$ produced by $\mk{v}$ and $\mk{v}'$ are orbit equivalent.
    \item The cohomogeneity-one actions on $\Cx H^{n+1}$ produced by $\mk{v}$ and $\mk{v}'$ are strongly orbit equivalent.
    \item $\mk{v}$ and $\mk{v}'$ are $\U(n)$-congruent.
\end{enumerate}

So what we really need is the classification of protohomogeneous subspaces of $\Cx^n$ with respect to $\U(n)$. This was carried out by Berndt and Br\"{u}ck in \cite{berndt_bruck}. They showed that a subspace of $\Cx^n$ is protohomogeneous if and only if it has constant K\"{a}hler angle, and two such subspaces are $\U(n)$-congruent if and only if they have the same dimension and K\"{a}hler angle. Together with the classification of cohomogeneity-one actions on $\Cx H^{n+1}$ with a totally geodesic singular orbit (see \cite[Theorem 1]{berndt_bruck}), Proposition \ref{rank one proposition} and the observation at the end of Subsection \ref{error correction} imply that actions in (7) exhaust the list of actions coming from those on $B_1$ by canonical extension. No two actions from (7) are mutually orbit equivalent by design: the normal spaces of their singular orbits differ either in dimension or (constant) K\"{a}hler angle. Here we use the fact that every isometry of $M$ is either holomorphic or anti-holomorphic so it preserves the K\"{a}hler angles of tangent subspaces (holomorphic and anti-holomorphic isometries constitute the two connected components of $I(M)$).

There is one action in (6) that may, in theory, have a totally geodesic singular orbit, namely the action of $H_{1,(0,4)}^\Uplambda$. Its orbit $S = H_{1,(0,4)}^\Uplambda \ccdot \hspace{0.8pt} o$ is a complex submanifold of $M$ of codimension 4. But so is the singular orbit of the action of $\SU(n+1,2) \subset G$ given in (3), so the action of $H_{1,(0,4)}^\Uplambda$ may be orbit equivalent to that of $\SU(n+1,2)$. To show that this is not the case, we have to prove that $S$ is not totally geodesic. To this end, we need to show the second fundamental form $\II$ of $S$ is nonzero. Note that 
$$
T_oS = \pr_\mk{p}(\mk{w}) \oplus \mk{a}_1 \oplus \mk{n}_1, \quad N_oS = \pr_\mk{p}(\mk{w}^\perp).
$$
Recall that $\mk{w}^\perp$ is contained in $\mk{g}_{\upalpha_2}$ if $n>1$ and coincides with $\mk{a}^1 \oplus \mk{g}_{\upalpha_2} \oplus \mk{g}_{2\upalpha_2}$ if $n=1$. Since $[\mk{g}_{\upalpha_1 + \upalpha_2}, \mk{g}_{-\upalpha_1}] = \mk{g}_{\upalpha_2}$, we can find $X \in \mk{g}_{\upalpha_1 + \upalpha_2}$ and $Y \in \mk{g}_{-\upalpha_1}$ such that $[X,Y] \in \mk{g}_{\upalpha_2} \mysetminus \mk{w}$. Note that $X - \uptheta X$ and $Y - \uptheta Y$ lie in $T_oS$. Since $S$ is an orbit of $H_{1,(0,4)}^\Uplambda$ and the Riemannian metric on $M$ comes from the Killing form of $\mk{g}$, we have $\II(X - \uptheta X, Y - \uptheta Y) = \pr_{N_oS}([Z, Y - \uptheta Y])$, where $Z \in \mk{k}$ is any vector such that $X - \uptheta X + Z \in \mk{h}_{1,(0,4)}^\Uplambda$ (see, for example\footnote{Any other method of computing the second fundamental form will work equally well here, this is just one way to do it.}, \cite{IIformula}). We pick $Z = X + \uptheta X$, which gives
$$
    \II(X - \uptheta X, Y - \uptheta Y) = \pr_{N_oS}([X + \uptheta X, Y - \uptheta Y]) = \pr_{N_oS}([X, Y] - \uptheta [X,Y]) \ne 0,
$$
which implies that the fundamental form of $S$ is nonzero at $o$ and $S$ is not totally geodesic.

Now we need to prove that the actions in (6) and (7) are mutually nonequivalent. We are going to show that, given an action from (6) with a singular orbit $S$ and one from (7) with a singular orbit $S'$ of the same dimension as $S$, normal spaces to $S$ and $S'$ have either different K\"{a}hler angles or vectors of different holomorphic sectional curvatures. Computing all such sectional curvatures by hand would be a daunting task, so we will employ a different strategy and leverage the abundance of geometric structures on $M$. In \cite{berndtgrassmannian}, Berndt studied the complex Grassmannian of two-planes $\mathrm{Gr}(2, \Cx^{n+4})$, which is the compact dual of our symmetric space $M$, so we denote it by $M^*$. Since holonomy is preserved under duality, $M^*$ is a Hermitian symmetric space and it also has a quaternion-K\"{a}hler structure. Write $I^*$ for the almost complex structure of $M^*$ and $\mathcal{J}^*$ for the 3-dimensional subbundle of $\End(T M^*)$ given by the quaternion-K\"{a}hler structure of $M^*$. Berndt showed that holomorphic sectional curvatures of $M^*$ lie between $4$ and $8$ after a suitable rescaling of the Riemannian metric. Moreover, he proved that, given any $p \in M^*$ and a nonzero $X \in T_p M^*$, the following are equivalent:

\begin{enumerate}[(i)]
    \item $X$ is singular.
    \item $I_p^* X \perp \mathcal{J}^*_p X$ or $I_p^* X \in \mathcal{J}^*_p X$.
    \item The holomorphic sectional curvature determined by $X$ is either minimal or maximal (that is, either $4$ or $8$).
\end{enumerate}

Recall that $X$ is called singular if it is tangent to more than one maximal flat in $M^*$. It is equivalent to asking that $X$ is a singular point with respect to the restricted isotropy representation of $M^*$, i.e. that the $\Ss(\U(n+2)\U(2))$-orbit of $X$ in $T_p M^*$ is singular. From this second description one sees that the set of regular (= nonsingular) vectors in $T_p M^*$ is connected, open, and dense. In (ii) and (iii), $I^*_p X \perp \mathcal{J}^*_p X$ corresponds to $K_\mathrm{hol}(X) = 4$, while $I_p^* X \in \mathcal{J}_p^*X$ corresponds to $K_\mathrm{hol}(X) = 8$. So this result allows us to divide singular tangent vectors to $M^*$ into two groups depending on their holomorphic sectional curvature. We temporarily call a singular vector $X$ \textbf{of type A} if $I_p^* X \perp \mathcal{J}_p^* X$ and \textbf{of type B} if $I_p^* X \in \mathcal{J}_p^* X$. Since duality preserves holonomy and isotropy subgroups and changes the sign of sectional curvatures, the whole discussion applies to $M$ with 'minimal' interchanged with 'maximal' in (iii) and $4$ and $8$ replaced with $-4c$ and $-8c$, respectively (our normalization of the Riemannian metric may differ from the one in \cite{berndtgrassmannian}). Now, the fact that is of paramount importance for us and that allows to use this interplay between the two structures of $M$ is that isometries preserve not only the set of singular vectors but their type. Indeed, any isometry is either holomorphic or anti-holomorphic, so it preserves holomorphic sectional curvatures and hence the type of a singular vector.

We already know what the complex structure of $M$ looks like with respect to the restricted root space decomposition, so now we need to describe the quaternion-K\"{a}hler structure. We have $\mk{g} = \mk{su}(n+2,2)$, and its Cartan decomposition is:
$$
\mk{k} = \left\{ \left. \left[
        \begin{array}{c | c}
            \begin{array}{c c c}
                & & \\
                & \mk{u}(n+2) & \\
                &  &
            \end{array} & 0 \\
            \hline
            0 & \mk{u}(2)
        \end{array}
    \right] \right| \tr = 0 \right\}, \quad 
    \mk{p} = \left\{ \left[
        \begin{array}{c | c}
            \begin{array}{c c c}
                & & \\
                & 0 & \\
                &  &
            \end{array} & B \\
            \hline
            B^* & 0
        \end{array}
    \right] \right\},
$$
where $B$ runs through $\mathrm{Mat}((n+2) \times 2, \Cx)$. The restricted isotropy representation of $M$ is just the adjoint representation of $K = \Ss(\U(n+2)\U(2))$ on $\mk{p}$. Note that if we identify $\mk{p}$ with $\Cx^{n+2} \oplus \Cx^{n+2} \cong \Cx^{n+2} \otimes \Cx^2$ in the obvious way, then the restricted isotropy representation is isomorphic to the restriction of the external tensor product representation $\uprho_{n+2} \otimes \uprho_2$ of $\U(n+2) \times \U(2)$ to $K$ (here $\uprho_k$ is the tautological representation of $\U(k)$ on $\Cx^k$). We have a normal subgroup $\Sp(1) \simeq \SU(2) \trianglelefteq K$, and its representation on $\mk{p}$ is just $1 \otimes \uprho_2$, so it extends to a unital $\Rl$-algebra homomorphism $\Hq \to \End(\mk{p})$, i.e. a quaternionic structure on $\mk{p}$. The fact that $\SU(2)$ is a normal subgroup of $K = \mathrm{Hol}(M,o)$ means that this structure is holonomy-invariant and thus gives a parallel quaternionic structure on $M$, while $\SU(2) \subseteq \O(\mk{p})$ means that this structure is compatible with the Riemannian metric and therefore makes $M$ into a quaternion-K\"{a}hler manifold. Note that -- by construction -- every almost complex structure $J$ in $\mathcal{J}_o$ lies in the image of the isotropy representation, i.e. it is the differential at $o$ of some isometry $k \in K$. We have already used this fact before when we described the totally geodesic singular orbits of the actions in (3)-(5).

Let us take $\mk{a} = \Rl(E_{n+2,n+3} + E_{n+3,n+2}) \oplus \Rl(E_{n+1,n+4} + E_{n+4,n+1})$. The restricted root space decomposition of $\mk{g}$ with respect to such a choice of $\mk{a}$ is explicitly described in Knapp (\cite[p. 371, Example 2]{knapp}). Using that, as well as our description of the quaternion-K\"{a}hler structure, one easily sees how $\mathcal{J}_o \subseteq \End(\mk{p})$ acts on different root vectors. Note also that the almost complex structure on $M$ is given at the point $o$ simply by
$$
I_o \colon \; \left[ \begin{array}{c | c}
            0 & B \\
            \hline
            B^* & 0
        \end{array} \right] \mapsto \left[ \begin{array}{c | c}
            0 & iB \\
            \hline
           -i B^* & 0
        \end{array} \right].
$$
Finally, observe that $\mk{su}(n+2,2)$ is a real form of $\mk{sl}(n+4, \Cx)$, so the Killing form $B$ of $\mk{g}$ is just the restriction of the Killing form of $\mk{sl}(n+4, \Cx)$, which is $(2n+8)\tr$. Now we are ready to search for singular vectors of types $A$ and $B$ in $\mk{p}$. For instance, one can compute that $\mathcal{J}_o$ sends $\mk{p}_{2\upalpha_2}$ onto $\Rl H_{\upalpha_2} \oplus \Cx (E_{n+1,n+3} + E_{n+3,n+1})$, while $I_o$ sends it onto $\Rl H_{\upalpha_2}$, so we deduce that $\mk{p}_{2\upalpha_2}$ consists entirely of singular vectors of type $B$. In a similar fashion, one calculates that

\begin{itemize}
    \item $\mk{p}_{\upalpha_1}$ and $\mk{p}_{\upalpha_1 + 2\upalpha_2}$ consist of singular vectors of type $A$, while
    \item $\mk{p}_{2\upalpha_1 + 2\upalpha_2}, \mk{p}_{\upalpha_2}, \mk{p}_{\upalpha_1 + \upalpha_2}, \Rl H^1,$ and $\Rl H_{\upalpha_2}$ all consist of singular vectors of type $B$.
\end{itemize}

Consider the action of $H_{2,1}^\Uplambda$. The normal space of its singular orbit at $o$ is $\mk{p}_{\upalpha_1}$, which is a totally real subspace of $\mk{p}$. The only action in (7) the normal spaces of whose singular orbit are totally real of dimension 2 is the action of $H_{1,(\pi/2,2)}^\Uplambda$. Whatever $n$ is, $\mk{w}^\perp$ intersects $\mk{g}_{\upalpha_2}$ and thus $N_o(H_{1,(\pi/2,2)}^\Uplambda \ccdot o) = \pr_\mk{p}(\mk{w}^\perp)$ contains a singular vector of type $B$, whereas $N_o(H_{2,1}^\Uplambda \ccdot o) = \mk{p}_{\upalpha_1}$ consists entirely of singular vectors of type $A$. Therefore, there cannot exist an orbit equivalence between these two actions. Now take the action of $H_{2,0}^\Uplambda$. The normal space of its singular orbit at $o$ is $T_o B_2 = \Rl H_{\upalpha_1} \oplus \mk{p}_{\upalpha_1}$, which is totally real. Again, the only action in (7) the normal spaces of whose singular orbit are totally real of dimension 3 is the action of $H_{1,(\pi/2,3)}^\Uplambda$ ($n \geqslant 2$). Regardless of the value of $n$, $\dim(\mk{w}^\perp \cap \mk{g}_{\upalpha_2}) \geqslant 2$, so $N_o(H_{1,(\pi/2,3)}^\Uplambda \ccdot o) = \pr_\mk{p}(\mk{w}^\perp)$ contains a 2-dimensional subspace $L$ of singular vectors of type $B$, while $N_o(H_{2,0}^\Uplambda \ccdot o)$ contains $\mk{p}_{\upalpha_1}$, a 2-dimensional subspace of singular vectors of type $A$. If there was an orbit equivalence between these two actions, there would be one fixing $o$, which would have to send $L$ onto a plane in $N_o(H_{2,0}^\Uplambda \ccdot o)$ overlapping with $\mk{p}_{\upalpha_1}$, which is impossible. Consequently, no action in (6) is orbit equivalent to an action in (7).

Finally, we proceed to the nilpotent construction method. To distinguish between the Euclidean and Hermitian inner products on $\mk{a} \oplus \mk{n}$, we will be adding the letter $H$ when talking about the latter. For example, $v \perp_H w$ means orthogonality with respect to the Hermitian inner product.

\textit{Nilpotent construction with} $j=2$. In this case we have:
\begin{align*}
    \mk{n}_2^1 &= \mk{g}_{\upalpha_2} \oplus \mk{g}_{\upalpha_1 + \upalpha_2} \simeq \Cx^n \oplus \Cx^n \cong \Cx^n \otimes \Cx^2, \\
    \mk{m}_2 &= \mk{z}_2 \oplus \mk{g}_2 = \mk{u}(n) \oplus \mk{so}(3,1) \simeq \mk{u}(n) \oplus \mk{sl}(2,\Cx), \\
    \mk{k}_2 &= \mk{u}(n) \oplus \mk{su}(2).
\end{align*}
It is not hard to show that the representation of $\mk{m}_2$ on $\mk{n}_2^1$ is equivalent to the external tensor product representation of $\mk{u}(n) \oplus \mk{sl}(2,\Cx)$ on $\Cx^n \otimes \Cx^2$. Consequently, the representation of $K_2^0 \simeq \U(n) \times \SU(2) \simeq \Ss(\U(n)\U(2))$ on $\mk{n}_2^1$ is equivalent to the isotropy representation of $\mathrm{Gr}^*(2, \Cx^{n+2}) = \SU(n,2)/\Ss(\U(n)\U(2))$. Pick some orthonormal bases (with respect to the Hermitian inner product) $e_1, \ldots, e_n \in \Cx^n$ and $f_1, f_2 \in \Cx^2$. Observe that when we think of $K_2^0 \curvearrowright \mk{n}_2^1$ as the isotropy representation of $\mathrm{Gr}^*(2, \Cx^{n+2})$, the two-dimensional subspace $\Rl (e_1 \otimes f_1) \oplus \Rl (e_2 \otimes f_2)$ is a maximal flat. This observation will prove useful later on. Now, let $\mk{v} \subseteq \mk{n}_2^1$ be an admissible and protohomogeneous subspace and let $v = v_1 + v_2 \in \mk{v}, \hspace{1pt} v_1 \in \mk{g}_{\upalpha_2}, \hspace{1pt} v_2 \in \mk{g}_{\upalpha_1 + \upalpha_2}$. First, mimicking the proof of Theorem 8 in \cite{berdntdominguez-vazquez}, we prove the following

\begin{lemma}\label{lemmaT}
Let $T = \left[ \begin{smallmatrix} 0 & -1 \\ 1 & 0 \end{smallmatrix} \right] \in \mk{so}(2) \subseteq \mk{su}(2) \subseteq \mk{m}_2$. If $[T, v_1] \perp_H v_2$, then either $v_1 = 0$ or $v_2 = 0$.
\end{lemma}
\vspace{-0.5em}
\begin{proof}[Proof of the lemma.]
Under the assumption $[T, v_1] \perp_H v_2$, we do not lose generality by taking $v_1 = r e_1 \otimes f_1$ and $v_2 = s e_2 \otimes f_2$ for some $r,s \in \Rl$. By protohomogeneity of $\mk{v}$, we have:
$$
\mk{v} = \Rl (r e_1 \otimes f_1 + s e_2 \otimes f_2) \oplus^\perp N_{\mk{k}_2}(\mk{v})(r e_1 \otimes f_1 + s e_2 \otimes f_2).
$$
Note that our assumption implies that the second summand here is actually orthogonal to both $e_1 \otimes f_1$ and $e_2 \otimes f_2$ (with respect to the Euclidean inner product). Now let $S + A \in N_{\mk{m}_2}(\mk{v}), \hspace{1pt} S \in \mk{u}(n), \hspace{1pt} A = \left[ \begin{smallmatrix} x & \hspace{0.55em}y \\ z & -x \end{smallmatrix} \right] \in \mk{sl}(2, \Cx)$. We compute:
\begin{align*}
    (S + A)(r e_1 \otimes f_1 &+ s e_2 \otimes f_2) = \\
    &= rS e_1 \otimes f_1 + sS e_2 \otimes f_2 + r e_1 \otimes (x f_1 + z f_2) + s e_2 \otimes (y f_1 - x f_2) \\
    &= r \Re(x) e_1 \otimes f_1 - s \Re(x) e_2 \otimes f_2 + (\text{terms in} \; (r e_1 \otimes f_1 + s e_2 \otimes f_2)^\perp).
\end{align*}
We see that $r \Re(x) e_1 \otimes f_1 - s \Re(x) e_2 \otimes f_2$ must lie in $\Rl (r e_1 \otimes f_1 + s e_2 \otimes f_2)$, which is only possible when either $r$ or $s$ is zero.
\end{proof}

Since $\Rl (e_1 \otimes f_1) \oplus \Rl (e_2 \otimes f_2)$ is a maximal flat, it intersects the isotropy orbit of each vector, i.e. there exists some $k \in K_2^0$ such that $\Ad(k)v \in \Rl (e_1 \otimes f_1) \oplus \Rl (e_2 \otimes f_2)$. It then follows from Lemma \ref{lemmaT} that $\Ad(k)v$ is proportional to either $e_1 \otimes f_1$ or $e_2 \otimes f_2$. Applying $T \in K_2^0$ if needed, we may assume the former is the case, so we simply assume $e_1 \otimes f_1 \in \mk{v}$.

Since $\mk{v}$ is protohomogeneous, we have $\mk{v} \subseteq \vspan\set{K_2^0 \ccdot (e_1 \otimes f_1)} = \mk{g}_{\upalpha_2} \oplus \Cx (e_1 \otimes f_2)$. We decompose $\mk{su}(2)$ as a vector space into two pieces:
$$
\ell = \left\{ \left. \begin{bmatrix} ia & \hspace{0.5em} 0 \\ 0 & -ia  \end{bmatrix} \right| a \in \Rl \right\}, \quad \ell^\perp = \left\{ \left. \begin{bmatrix} 0 & -\overline{z} \\ z & \hspace{0.5em} 0  \end{bmatrix} \right| z \in \Cx \right\},
$$
so $\mk{su}(2) = \ell \oplus \ell^\perp$. Clearly, $\mk{v} \subseteq \mk{g}_{\upalpha_2} \Leftrightarrow N_{\mk{k}_2}(\mk{v}) \subseteq \mk{u}(n) \oplus \ell$. One can easily see that every subspace $\mk{v}$ of $\mk{g}_{\upalpha_2}$ is automatically admissible. Now, we may regard such $\mk{v}$ as lying in the isometry Lie algebra $\mk{g}_1 = \widetilde{\mk{g}}_1$ of $B_1 \simeq \Cx H^{n+1}$, and $\mk{v}$ is protohomogeneous in $\mk{n}_2^1 \curvearrowleft M_2$ if and only if it is such in $\mk{g}_{\upalpha_2} \curvearrowleft \U(n)$ (which just means that it has constant K\"{a}hler angle). Moreover, the canonical extension of the resulting cohomogeneity-one action on $B_1$ has the same orbits as the action obtained from $\mk{v}$ by nilpotent construction performed on $M$. So such $\mk{v}$'s produce no new actions and we may assume $\mk{v}$ does not lie in $\mk{g}_{\upalpha_2}$ and thus $N_{\mk{k}_2}(\mk{v})$ has a nonzero projection in $\ell^\perp$. Let $S + A \in N_{\mk{k}_2}(\mk{v}), \hspace{1pt} S = (s_{ij})_{i,j=1}^n, \hspace{1pt} A = \left[ \begin{smallmatrix} ia & -\overline{z} \\ z & -ia \end{smallmatrix} \right], \hspace{1pt} z \ne 0$. We compute: \vspace{-0.4em}
\begin{align*}
    (S+A)(e_1 \otimes f_1) &= (ia + s_{11})e_1 \otimes f_1 + \sum_{i=2}^n s_{i1}e_i \otimes f_1 + z e_1 \otimes f_2, \\ \vspace{-0.5em}
    (S+A)^2(e_1 \otimes f_1) &= (\text{terms in} \hspace{0.4em} \mk{g}_{\upalpha_2} \oplus \Cx (e_1 \otimes f_2)) + \sum_{i=2}^n 2zs_{i1} e_i \otimes f_2.
\end{align*}
In order for this to lie in $\mk{v} \subseteq \mk{g}_{\upalpha_2} \oplus \Cx (e_1 \otimes f_2)$, we must have $s_{i1} = 0$ for $2 \leqslant i \leqslant n$, which means that $(ia + s_{11})e_1 \otimes f_1 + ze_1 \otimes f_2 \in \mk{v}$. But this forces $N_{\mk{k}_2}(\mk{v})$ to lie inside $\mk{u}(1) \oplus \mk{su}(2) \subseteq \mk{u}(n) \oplus \mk{su}(2)$, for otherwise we would have some $S + A \in N_{\mk{k}_2}(\mk{v}), S \in \mk{u}(n) \mysetminus \mk{u}(1),$ moving $(ia + s_{11})e_1 \otimes f_1 + ze_1 \otimes f_2$ out of $\mk{g}_{\upalpha_2} \oplus \Cx (e_1 \otimes f_2)$. The upshot of all this is that $\mk{v} \subseteq \Cx(e_1 \otimes f_1) \oplus \Cx(e_1 \otimes f_2)$ and $N_{\mk{m}_2}(\mk{v}) \subseteq \mk{u}(1) \oplus \mk{sl}(2, \Cx)$ (and we still assume $e_1 \otimes f_1 \in \mk{v}$). So we have reduced our problem to looking for admissible and protohomogeneous subspaces of $\Cx^2$ with respect to the tautological representation of $\mk{gl}(2, \Cx)$. Protohomogeneity singles out precisely subspaces of constant K\"{a}hler angle $\upvarphi$. We consider three cases:

\textit{Case 1: $\upvarphi = 0$}. Since we assume $\mk{v} \nsubseteq \mk{g}_{\upalpha_2}$, we must have $\mk{v} = \Cx(e_1 \otimes f_1) \oplus \Cx(e_1 \otimes f_2)$. But then we get a cohomogeneity-one action with a totally geodesic singular orbit isometric to $\mathrm{Gr}^*(2, \Cx^{n+3})$, see (3).

\textit{Case 2: $\upvarphi = \frac{\uppi}{2}$}. Without loss of generality, we pick $\mk{v} = \Rl(e_1 \otimes f_1) \oplus \Rl(e_1 \otimes f_2)$. In this case, $N_{\mk{gl}(2,\Cx)}(\mk{v}) = \mk{gl}(2, \Rl)$, so $\mk{v}$ fails to be admissible.

\textit{Case 3: $\upvarphi \in (0, \frac{\uppi}{2})$}. According to \cite[Proposition 7]{berndt_bruck}, we can take $\mk{v}$ to be the span of $(1,0)$ and $(i\cos \upvarphi, i\sin \upvarphi)$. If we write the coordinates on $\Cx^2$ as $z_1$ and $z_2$, then $\mk{v}$ is cut out by the equations $\Re (z_2) = 0$ and $\Im (z_2) = \Im (z_1) \tan\upvarphi$. We need to check whether the projection of $N_{\mk{sl}(2,\Cx)}(\mk{v})$ to the space of Hermitian traceless matrices is onto. A matrix $\left[ \begin{smallmatrix} x & \hspace{0.55em}y \\ z & -x \end{smallmatrix} \right] \in \mk{sl}(2, \Cx)$ normalizing $\mk{v}$ is subject to the following equations:
$$
\begin{cases}
\Re (z) = 0, \\
\Im (z) = \Im (x) \tan\upvarphi, \\
-\Im (z) \cos \upvarphi + \Im (x) \sin\upvarphi = 0, \\
\Re (z) \cos\upvarphi - \Re (x) \sin\upvarphi = (\Re (x) \cos\upvarphi + \Re (y) \sin\upvarphi) \tan\upvarphi.
\end{cases}
$$
The second and third equations are the same, so, simplifying, we are left with:
$$
\begin{cases}
\Re (z) = 0, \\
\Im (z) = \Im (x) \tan\upvarphi, \\
\Re (x) = - \Re (y) \dfrac{\tan \upvarphi}{2}.
\end{cases}
$$
These equations cut out a 3-dimensional subspace inside $\mk{sl}(2, \Cx)$. Its projection to the space of Hermitian traceless matrices is onto precisely when it does not intersect $\mk{su}(2)$. But these equations have a nontrivial solution in $\mk{su}(2)$, namely $\left[ \begin{smallmatrix} i & -i\tan \upvarphi \\ i\tan \upvarphi & -i \end{smallmatrix} \right]$, so $\mk{v}$ is not admissible.

\textit{Nilpotent construction with} $j=1$. In this case we have:
\begin{align*}
    \mk{n}_1^1 &= \mk{g}_{\upalpha_1} \oplus \mk{g}_{\upalpha_1 + \upalpha_2} \oplus \mk{g}_{\upalpha_1 + 2\upalpha_2} \simeq \Cx^{n+2}, \\
    \mk{m}_1 &= \mk{g}_1 \oplus \mk{z}_1 = \mk{su}(n+1,1) \oplus \mk{u}(1) \simeq \mk{u}(n+1,1), \\
    \mk{k}_1 &= \mk{s}(\mk{u}(n+1) \oplus \mk{u}(1)) \oplus \mk{u}(1) \simeq \mk{u}(n+1) \oplus \mk{u}(1).
\end{align*}
The representation of $\mk{m}_1$ on $\mk{n}_1^1$ can be shown to be equivalent to the tautological representation of $\mk{u}(n+1,1)$. We write $\Cx^{n+2} = \Cx^{n+1} \oplus \Cx$ and notice that $\mk{k}_1$ preserves the two summands. Consequently, a protohomogeneous subspace $\mk{v} \subseteq \Cx^{n+2}$ intersecting nontrivially with $\Cx^{n+1}$ (which is always the case when $\dim \mk{v} \geqslant 3$) must lie there entirely, in which case admissibility cannot be achieved. So we only need to consider 2-dimensional subspaces transversal to $\Cx^{n+1}$. Take any such $\mk{v}$. We can apply the same argument to its intersection with $\Cx \subseteq \Cx^{n+2}$, so it must be trivial as well. In other words, the projection $\pr_{\Cx^{n+1}}(\mk{v})$ is two-dimensional. Observe that this projection is protohomogeneous in $\Cx^{n+1}$ with respect to $\U(n+1)$ and thus has constant K\"{a}hler angle $\upvarphi$. Again, according to \cite[Proposition 7]{berndt_bruck}, we may assume 
$$
\mk{v} = \langle v_1, v_2 \rangle_\Rl = \langle e_1 + ae_{n+2}, i\cos \upvarphi e_1 + i\sin \upvarphi e_2 + be_{n+2} \rangle_\Rl, \; a,b \ne 0.
$$
Moreover, acting by $\U(1)$ in the last coordinate, we can take $a > 0$. Finally, since $K_1^0$ acts transitively on the unit circle in $\mk{v}$, we must have $|b| = a$. Consequently, we can confine ourselves to working in $\Cx e_1 \oplus \Cx e_2 \oplus \Cx e_{n+2}$. Let us denote the corresponding coordinates by $x,y,$ and $z$, respectively. Note that $\Im (b) \ne 0$, for otherwise we would have a nontrivial intersection $\mk{v} \cap \Cx^{n+1}$. We see that $\mk{v}$ is cut out by the equations
\begin{equation}\label{nastysystem}
    \begin{cases}
        \Re (y) = 0, \\
        \Im (x) \sin \upvarphi = \Im (y) \cos \upvarphi, \\
        \Im (b) \Im (x) = \Im (z) \cos \upvarphi, \\
        a \Re (x) = \Re (z) - \dfrac{\Re (c)}{\Im (c)}\Im (z).
    \end{cases}
\end{equation}
The subspace $\mk{v}$ is admissible if and only if for each $w \in \Cx^{n+1}$ there exists
$$
S = \left[ \begin{array}{c | c}
            \begin{array}{c c c}
                & & \\
                & A & \\
                &  &
            \end{array} & w \\
            \hline
            w^* & c
        \end{array} \right] \in \mk{u}(n+1,1)
$$
preserving $\mk{v}$. The fact that $S$ lies in $\mk{u}(n+1,1)$ means that $A \in \mk{u}(n+1)$ and $\Re (c) = 0$. Applying $S$ to $(1,0,\ldots,a)$ and $(i\cos\upvarphi,i\sin\upvarphi,0,\ldots,b)$, we get vectors 
$$
\left[ \begin{gathered} a_{11} + aw_1 \\
a_{21} + aw_2 \\ \cdots \\
\overline{w}_1 + iac \end{gathered} \right] \quad \text{and} \quad \left[ \begin{gathered} ia_{11}\cos\upvarphi + ia_{12}\sin\upvarphi + w_1 b \\
ia_{21}\cos\upvarphi + ia_{22}\sin\upvarphi + w_2 b \\ \cdots \\
i\overline{w}_1 \cos\upvarphi + i\overline{w}_2 \sin\upvarphi + ibc \end{gathered} \right].
$$
Note that $a_{12} = \overline{a}_{21}$ and $\Re (a_{11}) = \Re (a_{22}) = 0$. In order to lie in $\mk{v}$, both $S(v_1)$ and $S(v_2)$ must satisfy equations (\refeq{nastysystem}) (and all their other coordinates should be 0, but we do not care about that), which altogether gives a system of 8 equations, and we are left with a rather daunting and tedious task of solving it. We are not going to write all these equations here; instead, let us label them (i) through (viii), where equations (i) to (iv) are for $S(v_1)$ substituted into (\refeq{nastysystem}) and (v) to (viii) are for $S(v_2)$ substituted there. Fortunately, there is no need to solve the entire system. Indeed, provided that $\upvarphi \ne 0$, use (i) to express $\Re (a_{21})$ in terms of $\Re (w_2)$ and plug the result into (vi) to get a linear dependency on the real and imaginary parts of $w_1$ and $w_2$. Therefore, such $\mk{v}$ cannot be admissible.

Now let $\upvarphi = 0$, which just means that $\pr_{\Cx^{n+1}}(\mk{v})$ is a complex line. The first two equations in (\refeq{nastysystem}) then simply mean that $y = 0$. Assume first that $\Re (b) \ne 0$. Use equations (iii) and (iv) to express $a_{11}$ and $c$ in terms of $w_1$. Equation (vii) will tell us that $a \Im (b) = 1$. Inserting it all into (viii) yields a linear dependency on $\Re (w_1)$ and $\Im (w_1)$, which implies that coefficients in this dependency must be zero. But one can easily compute that the coefficient of $\Im (w_1)$ is $(a-\frac{1}{a})^2 + \Re (b)^2$, which cannot be zero. So we end up with the case $\Re (b) = 0$. Equation (iv) yields $a = 1$ and thus $b = i$, so $\mk{v}$ becomes the complex line spanned inside $\Cx^{n+2}$ by $e_1 + e_{n+2}$. It is not hard to verify that this subspace is admissible and protohomogeneous. Now we want to see what this $\mk{v}$ looks like with respect to the restricted root space decomposition. To this end, we need to have an explicit identification between $\mk{m}_1$ and $\mk{u}(n+1,1)$ and also between $\mk{n}_1^1$ and $\Cx^{n+2}$. We have:
$$
\mk{m}_1 = \left\{ \left. \left[ \begin{array}{c | c}
            \begin{array}{c c c}
                & & \\
                & A & \\
                &  &
            \end{array} & \begin{array}{c c c}
                & & \\
               0 & 0 & u \\
                & &
            \end{array} \\
            \hline
            \begin{array}{c c c}
                & 0 & \\
                & 0 & \\
                & u^* &
            \end{array} & \begin{array}{c c c}
                ib & & \\
                & ib & \\
                & & ia
            \end{array}
        \end{array} \right] \right| A \in \mk{u}(n+1), u \in \Cx^{n+1}, \hspace{1.5pt} a,b \in \Rl, \hspace{1pt} \tr = 0 \right\},
$$
where $u$ and $u^*$ are regarded as column and row vectors, respectively. Similarly,
$$
\mk{n}_1^1 = \left\{ \left. \left[ \begin{array}{c | c}
            \begin{array}{c c c}
                & & \\
                & 0 & \\
                &  &
            \end{array} & \begin{array}{c c c}
                & & \\
               -v & v & 0 \\
                & &
            \end{array} \\
            \hline
            \begin{array}{c c c}
                & v^* & \\
                & v^* & \\
                & 0 &
            \end{array} & \begin{array}{c c c}
                0 & 0 & -\overline{w} \\
                0 & 0 & -\overline{w} \\
                -w & w & 0
            \end{array}
        \end{array} \right] \right| v \in \Cx^{n+1}, w \in \Cx \right\},
$$
where the top left zero block is of the size $(n+1) \times (n+1)$ and $\pm v$ and $v^*$ are again regarded as column and row vectors, respectively. Now, sending the matrix in the definition of $\mk{m}_1$ to
$$
\left[ \begin{array}{c | c}
            \begin{array}{c c c}
                & & \\
                & A-ibE & \\
                &  &
            \end{array} & u \\
            \hline
            u^* & i(a+b)
        \end{array} \right]
$$
gives a Lie algebra isomorphism $\mk{m}_1 \isoto \mk{u}(n+1,1)$. Similarly, sending the matrix in the definition of $\mk{n}_1^1$ to $(v,w)$ (here $v$ is a row vector) gives an $\Rl$-linear isomorphism $\mk{n}_1^1 \isoto \Cx^{n+2}$. Moreover, under these isomorphisms, the adjoint representation of $\mk{m}_1$ on $\mk{n}_1^1$ becomes the tautological representation of $\mk{u}(n+1,1)$. Note, however, that the isomorphism $\mk{n}_1^1 \isoto \Cx^{n+2}$ is not $\Cx$-linear with respect to the complex structure on $\mk{n}_1^1$ induced from $\mk{a} \oplus \mk{n}$! It is $\Cx$-linear in $v$ but $\Cx$-antilinear in $w$. This subtlety is important if we want to transfer our subspace $\mk{v}$ from $\Cx^{n+2}$ to $\mk{n}_1^1$: while it is a complex line in $\Cx^{n+2}$, it will become a totally real 2-dimensional subspace in $\mk{n}_1^1$. In fact, if we modify $\mk{v}$ slightly by an element from $\U(n+1,1)$ to make it a complex line spanned by $(0,\ldots,0,1,1)$, then the subspace of $\mk{n}_1^1$ it corresponds to is precisely $\mk{g}_{\upalpha_1}$, which can be easily seen from our description of the isomorphism above. We thus have a cohomogeneity-one action given by the Lie subgroup $H_{1,\mk{g}_{\upalpha_1}}$, whose Lie algebra is
\begin{align*}
    \mk{h}_{1, \mk{g}_{\upalpha_1}} &= N_{\mk{m}_1}(\mk{n_1} \ominus \mk{g}_{\upalpha_1}) \oplus \mk{a}^1 \oplus (\mk{n_1} \ominus \mk{g}_{\upalpha_1}) \\
    &= (\mk{k}_0 \oplus \mk{a}^1 \oplus \mk{g}_{\upalpha_2} \oplus \mk{g}_{2\upalpha_2}) \oplus \mk{a}^1 \oplus (\mk{n_1} \ominus \mk{g}_{\upalpha_1}) \\
    &= \mk{k}_0 \oplus \mk{a} \oplus \mk{n}_2 = \mk{h}_{2,1}^\Uplambda,
\end{align*}
so $H_{1,\mk{g}_{\upalpha_1}} = H_{2,1}^\Uplambda$, and this action was taken into account in (6). Altogether, we conclude that the nilpotent construction does not yield any new actions for $M$.
\end{proof}

\printbibliography

\end{document}